\newtheorem{lemma}{Lemma}[section]
\newtheorem{theorem}[lemma]{Theorem}
\newtheorem{proposition}[lemma]{Proposition}
\newtheorem{corollary}[lemma]{Corollary}
\newtheorem{questions}[lemma]{Questions}
\newcommand{\thistheoremname}{}
\newtheorem*{genericthm}{\thistheoremname}
\theoremstyle{remark}
\newtheorem{remark}[lemma]{Remark}
\newtheorem{definition}[lemma]{Definition}
\numberwithin{equation}{section}
\newcommand{\N}{\mathbb{N}}
\newcommand{\Q}{\mathbb{Q}}
\newcommand{\R}{\mathbb{R}}
\newcommand{\ompr}{{\widehat{\omega}}}
\newcommand{\subw}[1]{\Sigma_{#1}}
\newcommand{\sipr}{{\widehat{\Sigma}}}
\newcommand{\sighat}{{\widehat{\sigma}}}
\newcommand{\That}{{\widehat{T}}}
\newcommand{\smallthat}{{\widehat{t}}}
\newcommand{\Shat}{{\widehat{S}}}
\newcommand{\smallshat}{{\widehat{s}}}
\newcommand{\cA}{\mathcal{A}} 
\newcommand{\cS}{\mathcal{S}} 
\newcommand{\cL}{\mathcal{L}} 
\newcommand{\bsigma}{\boldsymbol{\sigma}}
\newcommand{\bu}{\mathbf{u}}
\newcommand{\bx}{\mathbf{x}}
\newcommand{\labitem}[2]{\def\@itemlabel{#1}\item\def\@currentlabel{#1}\label{#2}}
\title[Generalizations of Sturmian sequences]{Generalizations of Sturmian sequences associated with $\boldsymbol{N}$-continued fraction algorithms}
\author{Niels Langeveld, Luc\'ia Rossi, and J\"org M. Thuswaldner}
\address{Chair of Mathematics and Statistics, University of Leoben, Franz-Josef-Strasse 18, A-8700 Leoben, Austria}
\email{niels.langeveld@unileoben.ac.at}
\email{lucia.rossi@unileoben.ac.at}
\email{joerg.thuswaldner@unileoben.ac.at}
\date{\today}
	\keywords{Substitutions, $S$-adic sequences, $N$-continued fractions, balance, complexity of sequences}
	\subjclass{68R15, 11J70, 37A44}
    \thanks{The doctoral position of the second author is supported by the Austrian Science Fund (FWF) as part of Discrete Mathematics Doctoral Program, project W1230. The third author is supported by the bilateral project I 5554 funded by the FWF and RSF}
\begin{document}
	
	\begin{abstract}
		Given a positive integer $N$ and $x\in[0,1]\setminus\Q$, an {\it $N$-continued fraction expansion} of $x$ is defined analogously to the classical continued fraction expansion, but with the numerators being all equal to $N$. Inspired by Sturmian sequences, we introduce the {\it $N$-continued fraction sequences} $\omega(x,N)$ and $\ompr(x,N)$, which are related to the $N$-continued fraction expansion of $x$. They are infinite words over a two letter alphabet obtained as the limit of a directive sequence of certain substitutions, hence they are {\it $S$-adic sequences}. When $N=1$, we are in the case of the classical continued fraction algorithm, and obtain the well-known Sturmian sequences. We show that $\omega(x,N)$ and $\ompr(x,N)$ are $C$-balanced for some explicit values of $C$ and compute their factor complexity function. We also obtain uniform word frequencies and  deduce unique ergodicity of the associated subshifts. Finally, we provide a Farey-like map for $N$-continued fraction expansions, which provides an additive version of $N$-continued fractions, for which we prove ergodicity and give the invariant measure explicitly.
	\end{abstract}
	
	\maketitle

	\section{Introduction}
	\subsection{The setting}
	Let $N$ be a positive integer. In this article, we will introduce a family of sequences over the alphabet $\{0,1\}$ that we call {\it $N$-continued fraction sequences}. As their name indicates, these sequences are related to the so-called {\it $N$-continued fraction algorithms} that were introduced by Burger {\it et al}.~\cite{BGKWY}. Given a real number $x\in[0,1]$, an {\it $N$-continued fraction expansion}  (or NCF expansion, for short) of $x$ is an expansion of the form 
	\[	
	x= \frac{N}{\displaystyle d_1+\frac{N}{\displaystyle d_2+\ddots}},
	\] 
	with {\it $N$-continued fraction digits} $d_n\ge 1$. If $N>1$, it turns out that there exist infinitely many different NCF expansions of $x$ (see \cite{DKW}). Though, when we impose that $d_n\ge N$, we find a unique infinite expansion for all irrational numbers and exactly two finite expansions for rational numbers.  These expansions are called the greedy NCF expansions. 
	
	$N$-continued fraction expansions and $N$-continued fraction sequences form a natural generalization of a well known setting. Indeed, if $N=1$, the $1$-continued fraction algorithm is just the classical continued fraction algorithm and $1$-continued fraction sequences will turn out to be {\it Sturmian sequences}, which have been studied extensively (see for instance the survey \cite{A2}). We recall that a sequence $\omega$ over the alphabet $\{0,1\}$ is called Sturmian if, for any given $n\in \N$, there exist exactly $n+1$ pairwise different factors (subwords) of $\omega$ of length $n$. More formally, we say that the {\it factor complexity function} $p_\omega$ of $\omega$ satisfies $p_\omega(n)=n+1$. It is easy to see that Sturmian sequences are the non eventually periodic sequences with the smallest possible complexity (see {\it e.g.}~\cite[Corollary~4.3.2]{CN}). A Sturmian sequence $\omega$ has the property that, given any two factors $u$ and $v$  of $\omega$ of the same length and a letter $a\in \{0,1\}$, the number of occurrences of $a$ in $u$ differs from the number of occurrences of $a$ in $v$ by at most one. This property is called {\it $1$-balance} (or just {\it balance}) and characterizes Sturmian sequences. More precisely, the Sturmian sequences are exactly the non eventually periodic balanced sequences over two letters ({\it cf. e.g.}~\cite[Theorem~6.1.8]{A2}). Moreover, Morse and Hedlund~\cite{Morse&Hedlund:1940} as well as Coven and Hedlund~\cite{Coven&Hedlund:1973} discovered a connection between Sturmian sequences and rotations by an irrational number $\alpha$. Rauzy found a very elegant proof for this connection that relates Sturmian sequences to $S$-adic sequences and to the classical continued fraction algorithm (see \cite{Arnoux-Rauzy:91} and Rauzy's earlier papers~\cite{Rauzy:77,Rauzy:79}). 
	
	The aim of the present article is to carry over some of these properties of Sturmian sequences to the more general setting of $N$-continued fraction sequences.

	\subsection{Outline of the paper}	
	In Section~\ref{sec:prel} we introduce some basic notions related to NCF expansions, alphabets and words, substitutions, and {\it $S$-adic} sequences. 
	
	Section~\ref{sec:NCFwords} is devoted to the definition of NCF sequences and their duals and their relation to NCF algorithms. Inspired by the Sturmian substitutions and their duals (see {\it e.g.}~\cite{AF:01}), given $N\geq1$ we associate two sets $\cS$ and $\widehat \cS$ of substitutions on the alphabet $\cA= \{0,1\}$ to the NCF algorithm (see Definition~\ref{def:subst}). By applying the NCF algorithm to $x\in[0,1]\setminus\Q$, we can then associate certain {\it directive sequences} $\bsigma_x$ and $\widehat{\bsigma}_x$ of substitutions taken from $\cS$ and $\widehat \cS$, respectively. Indeed, these sequences are defined in a natural way in terms of the NCF digits of $x$ and can be regarded as a combinatorial version of the NCF expansion of $x$. The sequences $\bsigma_x$ and $\widehat{\bsigma}_x$ can then be used to define the {\it $S$-adic sequences} (or  limit sequences) $\omega(x,N)$ and $\ompr(x,N)$, respectively, which are the NCF sequences we are interested in and their duals. We will show that the ratio of the letter frequencies in $\omega(x,N)$ converges to $x$. This is related to the existence of a generalized right eigenvector for the directive sequence $\bsigma_x$. 
	
	In Section \ref{sec:balance}, we prove balance properties of the NCF sequences and their duals that generalize the $1$-balance of Sturmian sequences. In particular, we show that $\omega(x,N)$ is $N^2$-balanced and $\ompr(x,N)$ is $N$-balanced for each $x$. Moreover, for each $K\geq N$ we consider the set $W_{K,N}$ of all irrational $x\in[0,1]$ whose NCF digits are greater than or equal to $K$. We show that $\ompr(x,N)$ is $ C $-balanced and $\omega(x,N)$ is $ N \cdot C $-balanced for all $x \in W_{K,N} $ for some explicitly given constant $C=C(K,N)$ that approaches $2$ if $K$ tends to infinity (if $N\ge 2$ is fixed). We also give lower bounds for the balance constants. 
	
	In Section \ref{sec:complexity}, we study the factor complexity of $\omega(x,N)$ and $\ompr(x,N)$, which also generalizes the complexity function of Sturmian sequences. This is done by characterizing {\it special factors}  of these sequences. More specifically, if $p_\omega$ and $p_\ompr$ are the respective complexity functions, we show that $p_{\omega}(n+1) - p_{\omega}(n) $ and $p_{\ompr}(n+1) - p_{\ompr}(n)$ can only take the values $1$ or $2$, which implies that $p_\omega(n) \leq 2n$ and $p_\ompr(n) \leq 2n$ for all $n \in \N$. We specify for each $n\in\N$ which of these two values is attained. this allows us to give an explicit formula of the factor complexity, and in fact we give it in terms of the {\it convergents} $c_n=\frac{p_n}{q_n}$ of the NCF expansion of $x$, {\it i.e.}, the rationals obtained by truncating the $N$-continued fraction expansion after $n$ steps. From the factor complexity function, we deduce uniform word frequencies for both families of sequences, and unique ergodicity of the topological dynamical systems given by the respective subshifts.
	
	In Section \ref{sec:dynamics}, we state properties of growth rate and entropy, and give a Farey-like map for greedy NCF expansions together with its invariant measure. Finally, we pose some open questions. 
	
	\section{Preliminaries}\label{sec:prel}
	
	In this section, we state some basic definitions and properties regarding $N$-continued fractions, substitutions, and $S$-adic sequences.
	
	\subsection{$\boldsymbol{N}$-continued fraction expansions} 
	As a variation on the regular continued fraction algorithm, Burger {\it et al.} introduced $N$-continued fraction expansions in \cite{BGKWY}. These are expansions of the form
	\[
	x=\frac{N}{\displaystyle d_1+\frac{N}{\displaystyle d_2+\ddots}}.
	\]
	For $N=1$, we find back the regular continued fraction algorithm. For the case $N \geq 2$, these continued fractions share some properties with the regular ones but there are also differences worth mentioning. For example, in contrast to the regular continued fraction expansions, any number has infinitely many different NCF expansions, see \cite{AW,DKW}. Also the behavior of quadratic irrationals seems to be  very different. For regular continued fractions, we know that any quadratic irrational has a purely or eventually periodic expansion. In \cite{BGKWY} it is proven that for every quadratic irrational number there exist infinitely many eventually periodic NCF expansions with period-length 1. On the other hand, for a fixed $N \geq 2$ it seems that there are quadratic irrational numbers with aperiodic NCF expansions, see \cite{DKW}. Another difference is that for Lebesgue almost all $x\in[0,1]$ the regular continued fraction expansion has arbitrarily large digits, but for NCF expansions we can find for every $x$ an NCF expansion such that the digits are bounded, see \cite{KL}.

	In this article, we look at the {\it greedy NCF expansion} obtained from the map $T_N$.
	Fix $N \geq 1$ and define $T_N:[0,1]\rightarrow[0,1]$ as 
	\begin{equation}\label{mapTN}
		T_N(x)=
		\begin{cases}	
			\frac{N}{x}-\left\lfloor \frac{N}{x} \right\rfloor&x\neq 0,\\
			0	& x=0,
		\end{cases}
	\end{equation}
	see Figure \ref{fig:T2} for examples. Set $d_1(x)=\big\lfloor \frac{N}{x} \big\rfloor$ and $d_n(x)=d_1(T^{n-1}_N(x))$ whenever $T^{n-1}_N(x)\neq 0$.
	For $x$ we find 
	\begin{eqnarray*}
		x&=& \frac{N}{d_1(x)+T_N(x)}\\
		&=& \frac{N}{\displaystyle d_1(x)+\frac{N}{d_2(x)+T_N^2(x)}}\\
		&=&  \frac{N}{\displaystyle d_1(x)+\frac{N}{\displaystyle d_2(x)+\ddots}}.
	\end{eqnarray*}
	
	\begin{figure}[ht]
		\centering
		\subfigure{\begin{tikzpicture}[scale=5]
				\draw(0,0)node[below]{\small $0$}--(1,0)node[below]{\small $1$}--(1,1)--(0,1)node[left]{\small $1$}--(0,0);

				\draw(0.15,0.5)node{\Huge $\cdots$};
				
				\draw[thick, purple!50!black, smooth, samples =20, domain=2/3:1] plot(\x,{2/\x-2});
				\draw[thick, purple!50!black, smooth, samples =20, domain=2/4:2/3] plot(\x,{2/\x-3});
				\draw[thick, purple!50!black, smooth, samples =20, domain=2/5:2/4] plot(\x,{2/\x-4});
				\draw[thick, purple!50!black, smooth, samples =20, domain=2/6:2/5] plot(\x,{2/\x-5});
				\draw[thick, purple!50!black, smooth, samples =20, domain=2/7:2/6] plot(\x,{2/\x-6});
				
				\draw[dotted](2/3,0)node[below]{\small $\frac{2}{3}$}--(2/3,1);
				\draw[dotted](0.5,0)node[below]{\small $\frac{1}{2}$}--(0.5,1);
				\draw[dotted](2/5,0)node[below]{\small $\frac{2}{5}$}--(2/5,1);
				\draw[dotted](1/3,0)node[below]{\small $\frac{1}{3}$}--(1/3,1);
				\draw[dotted](2/7,0)node[below]{\small $\frac{2}{7}$}--(2/7,1);
		\end{tikzpicture}}\hskip 1.5cm
		\subfigure{\begin{tikzpicture}[scale=5]
				\draw(0,0)node[below]{\small $0$}--(1,0)node[below]{\small $1$}--(1,1)--(0,1)node[left]{\small $1$}--(0,0);

				\draw(0.2,0.5)node{\Huge $\cdots$};
				
				\draw[thick, purple!50!black, smooth, samples =20, domain=5/6:1] plot(\x,{5/\x-5});
				\draw[thick, purple!50!black, smooth, samples =20, domain=5/7:5/6] plot(\x,{5/\x-6});
				\draw[thick, purple!50!black, smooth, samples =20, domain=5/8:5/7] plot(\x,{5/\x-7});
				\draw[thick, purple!50!black, smooth, samples =20, domain=5/9:5/8] plot(\x,{5/\x-8});
				\draw[thick, purple!50!black, smooth, samples =20, domain=5/10:5/9] plot(\x,{5/\x-9});
				\draw[thick, purple!50!black, smooth, samples =20, domain=5/11:5/10] plot(\x,{5/\x-10});
				\draw[thick, purple!50!black, smooth, samples =20, domain=5/12:5/11] plot(\x,{5/\x-11});
				\draw[thick, purple!50!black, smooth, samples =20, domain=5/13:5/12] plot(\x,{5/\x-12});
				\draw[thick, purple!50!black, smooth, samples =20, domain=5/14:5/13] plot(\x,{5/\x-13});
				
				\draw[dotted](5/6,0)node[below]{\small $\frac{5}{6}$}--(5/6,1);
				\draw[dotted](5/7,0)node[below]{\small $\frac{5}{7}$}--(5/7,1);
				\draw[dotted](5/8,0)node[below]{\small $\frac{5}{8}$}--(5/8,1);
				\draw[dotted](5/9,0)node[below]{\small $\frac{5}{9}$}--(5/9,1);
				\draw[dotted](1/2,0)node[below]{\small $\frac{1}{2}$}--(1/2,1);
				\draw[dotted](5/11,0)node[below]{\small $\frac{5}{11}$}--(5/11,1);
				\draw[dotted](5/14,0)--(5/14,1);

		\end{tikzpicture}}

		\caption{  The map $T_N$ for $N=2$ on the left and $N=5$ on the right. }
		\label{fig:T2} 
	\end{figure}
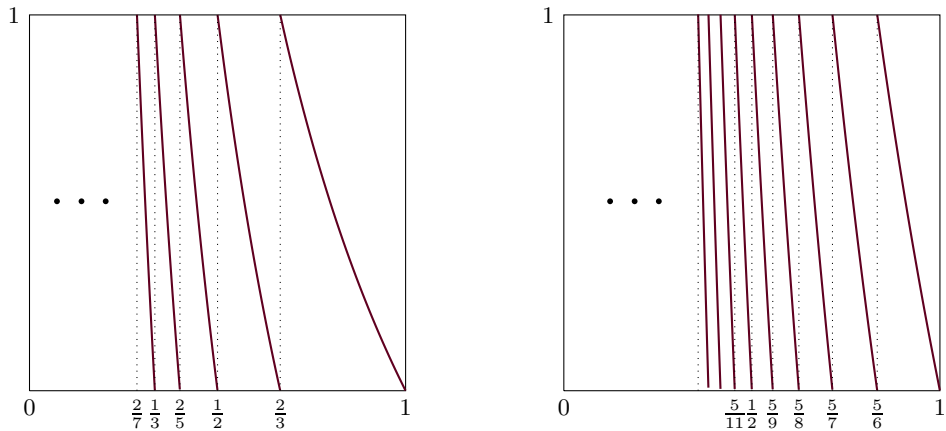

	This continued fraction expansion is finite if and only if $x\in\mathbb{Q}$. We only want to consider expansions with infinitely many digits, hence from here onward we assume $x \in [0,1] \setminus \Q$.  Following the notation of \cite{BGKWY}, we denote $d_n = d_n(x)$, and write $x=[0;d_1,d_2,\ldots]_N$ for the greedy expansion of $x$. Note that this greedy expansion is the unique NCF expansion of $x$ whose digits are all greater than or equal to $N$. On the other hand, each sequence $(d_n)_{n\geq 1}$ with $d_n\geq N$ occurs as a greedy expansion of some irrational $x\in[0,1]\setminus \Q$.
	
	\subsection{{\bf $S$}-adic sequences} 
	Consider a finite alphabet $\cA$ and let $\cA^*$ be the free monoid generated by $\cA$ equipped with the operation of concatenation, that is, $\cA^*$ consists of all the {\it (finite) words} $w_0\cdots w_{n-1}$ with $n\in \N$ and letters $w_0,\ldots, w_{n-1}\in \cA$. The choice $n=0$ corresponds to the empty word which is denoted by $\varepsilon$. A word $u\in\cA^*$ is called a \textit{factor} of $v\in\cA^*$ if $v\in\cA^*u\cA^*$ and we denote it by $u \subset v$. We call $u\in\cA^*$ a \textit{prefix} of $v$ if $v\in u\cA^*$, and a \textit{suffix} of $v$ if $v \in \cA^* u$. We also define $\cA^\N$ as the space of {\it (right) infinite words} or {\it sequences} $w_0w_1\cdots$ with $w_0,w_1,\ldots\in \cA$. We endow $\cA^\N$ with the product topology of the discrete topology on each copy of $\cA$. A word $u\in\cA^*$ is a factor of $\omega\in\cA^\N$ if $\omega\in\cA^*u\cA^\N$ and a prefix of $\omega$ if $\omega\in u\cA^\N$.
	
	For every $u\in\cA^*$, denote by $|u|$ the length ({\it i.e.}, the number of letters) of $u$. For every $a\in\cA$, denote by $|u|_a$ the number of occurrences of the letter $a$ in the word $u$, and, for a word $v$, denote by $|u|_v$ the number of occurrences of $v$ in $u$. Given $\omega \in \cA^* \cup \cA^\N,$ we write $\omega=\omega_0 \omega_1 \omega_2 \cdots$ where $\omega_i \in \cA.$ Given $a \in \cA$ and $d \in \N$, we write $a^d = \underbrace{a\cdots a}_{d\mbox{ times}}$.
	
	We define the \textit{abelianization map} as 
	\begin{equation}\label{abelianizationmap}
		{\bf l}: \cA^*\rightarrow \N^{|\cA|};\quad u\mapsto \,^t\!(|u|_a)_{a \in \cA}.
	\end{equation}
	The \textit{language} $\cL_{\omega}$ of a sequence $\omega$ is given by
	\[\cL_{\omega}:=\{u\in\cA^*\,:\,u\mbox{ is a factor of }\omega \}.	\]
	
	A map $\sigma:\cA\rightarrow\cA^*\setminus\{\varepsilon\}$ is a \textit{substitution} over the alphabet $\cA$. The domain of $\sigma$ can be extended to $\cA^*$ by concatenating the images of each letter, that is, $\sigma$ is an endomorphism over the free monoid $\cA^*$. This allows even to naturally extend the domain of $\sigma$ to the set of sequences~$\cA^\N$.
	
	Given a substitution $\sigma$, we define its \textit{incidence matrix} as the square matrix $M_\sigma=(|\sigma(j)|_i)_{i,j\in\cA}$. This definition immediately implies that ${\bf l}(\sigma(u))=M_\sigma{\bf l}(u)$ for every $u\in\cA^*$. We say that $\sigma$ is unimodular if $\det M_\sigma=\pm1$. The substitutions considered in this article will be nonunimodular in the case $N\ge 2$. 
	
	\begin{definition}[Directive sequence and $S$-adic sequence]\label{def:sadic}	
		Let $\bsigma=(\sigma_n)_{n\geq 1}$ be a sequence of substitutions $\sigma_n:\cA^*\rightarrow\cA^*$ over the alphabet $\cA$. We denote the set of substitutions as $\cS=\{\sigma_n\,:\,n\geq 1 \}$; this set may be finite or infinite. We say that $\bsigma$ is a {\it directive sequence}.
		
		A sequence $\omega\in\cA^\N$ is an {\it $S$-adic sequence} (or  {\it limit sequence}) of the directive sequence $\bsigma=(\sigma_n)_{n\geq 1}$ if there exist $\omega^{(1)}, \omega^{(2)},\ldots\in\cA^\N$ such that
		\begin{equation}\label{eq:limworddef}
		\omega^{(1)}=\omega,\quad \omega^{(n)}=\sigma_{n}(\omega^{(n+1)})\quad\mbox{for all } n \geq 1.
		\end{equation}
	\end{definition}
	
	The best known examples of $S$-adic sequences are furnished by Sturmian sequences. They can be obtained as the (unique) limit sequence of $(\sigma_n)_{n\geq 1}$ where $\sigma_n(0) =0^{d_n} 1$ and $\sigma_n(1) =0$. Sturmian words are related to the classical continued fraction expansion of irrationals. See for instance \cite{A2,AF:01,BST2,BST1,BD,T} for results on $S$-adic sequences.	
	
	\begin{definition}[Generalized right eigenvector]\label{righteigenvector}
		Denote by $\R^d_+$ the set of vectors with positive entries in $\R^d$. Let $(M_n)_{n\geq 1}$ be a sequence of matrices in $\N^{d\times d}$.  A vector $\bu\in\R^d_+$ with $\|\bu\|_1=1$ is said to be a \textit{generalized right eigenvector} for $(M_n)_{n\geq 1}$ if
		\begin{equation}\label{weakconvergence}
			\bigcap_{n\geq 1}M_{1}\cdots M_{n}\R^d_+=\R_+\bu. 
		\end{equation}
		Given a directive sequence $\bsigma=(\sigma_n)_{n\geq 1}$ on an alphabet $\cA$, we say that $\bu\in\R^{|\cA|}_+$ is a generalized right eigenvector of $\bsigma$ if $\bu$ is a generalized right eigenvector of the corresponding sequence of incidence matrices $(M_{\sigma_n})_{n\geq 1}$.
	\end{definition}
	
	In the usual substitutive case (that is, when all substitutions are the same), the incidence matrices all equal a matrix $M$, and one finds that $\bu$ is the Perron-Frobenius eigenvector of $M$, {\it i.e.}, the eigenvector corresponding to the unique largest real eigenvalue (having positive entries).

	On the topological space $\cA^\N$ we consider the left shift $\Sigma$ defined by $\Sigma(\omega_0\omega_1\cdots)=\omega_1\omega_2\cdots$, where $\omega_j\in\cA$. Given a sequence $\omega$ on the alphabet $\cA$, consider the closed set 
	\[  X_\omega := \overline{ \{ \Sigma^n(\omega)\,:\,n\in\N \} }. \]
	Then $(X_\omega,\Sigma)$ constitutes a topological dynamical system called a {\it subshift}. We are interested in ergodic properties of this type of dynamical systems for NCF sequences, which translate to the existence of word frequencies.
	
	\section{$N$-continued fraction sequences}\label{sec:NCFwords}	
	In this section, we will define our main objects of study, which are two families of binary sequences called NCF sequences and dual NCF sequences. To do this, for each irrational $x\in[0,1]$ we will first construct two $S$-adic sequences. The choice of these sequences is what is known as a {\it substitution selection} for the NCF algorithm and its natural extension in the sense of \cite{BST2}. We will justify this in more detail at the end of this section.

	\subsection{Definition of $\boldsymbol{N}$-continued fraction sequences} 	
	We start with relating directive sequences of substitutions to $N$-continued fraction expansions.
	
	\begin{definition}[Directive sequences for $N$-continued fraction expansions]\label{def:subst}
		Let $N \geq 1$ and let $x=[0;d_1,d_2,\ldots]_N \in [0,1] \setminus \Q$.
		\begin{enumerate}
			\item For each $n\geq 1$, consider the substitutions
			\[ 
			\sigma_n:
			\begin{cases}
				0\rightarrow 0^{d_n} 1^N,\\
				1\rightarrow 0.\\
			\end{cases}
			\]
			We assign to  $x$ the directive sequence $\bsigma_x=(\sigma_n)_{n\geq 1}$. We denote $\cS=\{\sigma_n\,:\,n\geq 1 \}$.
			
			\item For each $n\geq 1$, consider the dual substitutions
			\[ 
			\sighat_n:
			\begin{cases}
				0\rightarrow 0^{d_n} 1,\\
				1\rightarrow 0^N.\\
			\end{cases}
			\]
			We assign to $x$ the directive sequence ${\widehat{\bsigma}}_x=(\sighat_n)_{n\geq 1}$. We denote $\widehat{\cS}=\{\sighat_n\,:\,n\geq 1 \}$.
		\end{enumerate}
	\end{definition}
	
	For each $n\geq 1$, the corresponding incidence matrices of $\sigma_n$ and $\sighat_n$ are given by
	$$M_{\sigma_n} = \left(
	\begin{matrix}
		d_n & 1 \\
		N & 0 \\
	\end{matrix}
	\right),\qquad M_{\sighat_n} = \left(
	\begin{matrix}
		d_n & N \\
		1 & 0\\
	\end{matrix}
	\right).$$ 
	Note that they are the transpose of each other. Moreover, easy calculation shows that when $d_n\geq N$, which is the case in the greedy algorithm that we have chosen, the matrices are {\it Pisot}. Recall that a matrix is said to be Pisot if one of its eigenvalues is a real number greater than $1$, and the rest of its eigenvalues have modulus less than $1$. Pisot matrices are very relevant in the study of substitutions (see for instance~\cite{ABBLS}). We would also like to remark that these are not the matrices of the M\"obius transformations associated with the inverse branches of $T_N(x)$. The difference is that the numbers on the diagonal have to be swapped as well as on the anti-diagonal. One can achieve this by relabelling $0$ as $1$ and vice versa. However, this will not affect our results. We chose the substitutions as it is so that the dual substitutions $\sighat_n$ are a particular instance of so called $\beta$-substitutions for simple Parry numbers (see \cite[Section 3.2]{BS} and \cite{FMP}).
	
	Next, we introduce $N$-continued fraction sequences as limit sequences for $\bsigma_x$ and ${\widehat{\bsigma}}_x$. In general, there can be several $S$-adic sequences for the same directive sequence, however in the following definition we refer to ``the" $S$-adic sequence and afterwards we show that it is indeed unique.
	
	\begin{definition}[$N$-continued fraction sequence and its dual]\label{def:omega}
		Let $N \geq 1$ and $x=[0;d_1,d_2,\ldots]_N \in [0,1] \setminus \Q$.
		\begin{enumerate}
			\item We define the \textit{NCF sequence} $\omega(x,N)$ as the $S$-adic sequence of the directive sequence $\bsigma_x=(\sigma_n)_{n\geq 1}$. 
			
			
			\item	We define the \textit{dual NCF sequence} $\ompr(x,N)$ as the $S$-adic sequence of the directive sequence ${\widehat{\bsigma}}_x=(\sighat_n)_{n\geq 1}$. 
		\end{enumerate}
	\end{definition}
	
	\begin{proposition}
	Let $N \geq 1$ and $x=[0;d_1,d_2,\ldots]_N \in [0,1] \setminus \Q$.
	    The NCF sequence $\omega(x,N)$ and its dual $\ompr(x,N)$ are well defined. The finite words $(\sigma_1 \circ \sigma_2 \circ \cdots \circ\sigma_n(1))_{n\geq 1}$ form a nested sequence of prefixes of $\omega(x,N)$ and the finite words $(\sighat_1 \circ \sighat_2  \circ \cdots \circ\sighat_n(0))_{n\geq 1}$ form a nested sequence of prefixes of $\ompr(x,N)$.
	\end{proposition}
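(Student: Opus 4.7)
The plan is to leverage a single combinatorial observation specific to the substitutions of Definition~\ref{def:subst}: because the greedy NCF digits satisfy $d_n\geq N\geq 1$, the word $\sigma_n(1)=0$ is a strict prefix of $\sigma_n(0)=0^{d_n}1^N$, and the word $\sighat_n(1)=0^N$ is a strict prefix of $\sighat_n(0)=0^{d_n}1$ (the latter since $N\leq d_n<d_n+1$). Both facts are immediate from the definitions once one uses $d_n\geq N$.

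First, I would establish the two nested prefix chains. Since any substitution preserves the prefix order, applying $\sigma_1\circ\cdots\circ\sigma_{n-1}$ to the relation that $\sigma_n(1)$ is a prefix of $\sigma_n(0)$ shows that $\sigma_1\circ\cdots\circ\sigma_n(1)$ is a strict prefix of $\sigma_1\circ\cdots\circ\sigma_n(0)$. Rewriting the right-hand side via $\sigma_{n+1}(1)=0$, it equals $\sigma_1\circ\cdots\circ\sigma_{n+1}(1)$, which yields the nested chain for $\bsigma_x$. For $\widehat{\bsigma}_x$, I use the direct morphism expansion
\[
\sighat_1\circ\cdots\circ\sighat_{n+1}(0)=\bigl(\sighat_1\circ\cdots\circ\sighat_n(0)\bigr)^{d_{n+1}}\sighat_1\circ\cdots\circ\sighat_n(1),
\]
which manifestly begins with $\sighat_1\circ\cdots\circ\sighat_n(0)$ since $d_{n+1}\geq 1$. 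In either case the prefix lengths tend to infinity, a routine check from $|\sigma_n(0)|,|\sighat_n(0)|\geq 2$ and the non-erasing nature of all substitutions, so the chains converge in $\cA^{\N}$ to infinite words which I take as the candidates for $\omega(x,N)$ and $\ompr(x,N)$.

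Next, I would verify the $S$-adic property and the uniqueness of these candidates. Denote by $\omega$ the candidate limit for $\bsigma_x$ and define $\omega^{(n)}$ as the analogous limit built from the shifted directive sequence $(\sigma_m)_{m\geq n}$; the identity $\omega^{(n)}=\sigma_n(\omega^{(n+1)})$ then follows from the continuity of $\sigma_n$ on $\cA^{\N}$ in the product topology, confirming that $\omega$ is an $S$-adic sequence of $\bsigma_x$ in the sense of \eqref{eq:limworddef}. For uniqueness, let $\eta$ be any $S$-adic sequence of $\bsigma_x$ with associated sequences $(\eta^{(n)})_n$, and let $a_n\in\cA$ denote the first letter of $\eta^{(n+1)}$. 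Then $\eta=\sigma_1\circ\cdots\circ\sigma_n(\eta^{(n+1)})$ admits $\sigma_1\circ\cdots\circ\sigma_n(a_n)$ as a prefix; by the preceding step, $\sigma_1\circ\cdots\circ\sigma_n(1)$ is a prefix of $\sigma_1\circ\cdots\circ\sigma_n(a_n)$ whether $a_n=0$ or $a_n=1$, and it is also a prefix of $\omega$ by construction. Since these common prefixes have lengths diverging to infinity, $\omega=\eta$. The argument for $\ompr(x,N)$ is parallel: here the universal common prefix is $\sighat_1\circ\cdots\circ\sighat_n(1)$, which is a prefix of $\sighat_1\circ\cdots\circ\sighat_n(0)$ thanks to $\sighat_n(1)$ being a prefix of $\sighat_n(0)$, and whose length equals $N\,|\sighat_1\circ\cdots\circ\sighat_{n-1}(0)|$ and hence diverges.

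The only delicate point is the uniqueness step: $S$-adic sequences are in general not determined by their directive sequence, but here the explicit common-prefix structure derived from $\sigma_n(1)$ (resp.\ $\sighat_n(1)$) being a prefix of $\sigma_n(0)$ (resp.\ $\sighat_n(0)$) collapses every admissible choice of the letters $a_n$ to the same nested chain. The remaining verifications reduce to routine bookkeeping with the incidence matrices $M_{\sigma_n}$ and $M_{\sighat_n}$ to guarantee the divergence of the prefix lengths.
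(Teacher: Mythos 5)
Your proof is correct and follows essentially the same strategy as the paper's: both arguments rest on the observation that the images of the two letters under each substitution share a common prefix (the paper phrases this as ``$\sigma_n(a)$ starts with $0=\sigma_n(1)$ for both letters $a$,'' you phrase it as ``$\sigma_n(1)$ is a prefix of $\sigma_n(0)$''), which forces every limit sequence to contain the nested words as prefixes and hence yields uniqueness once the prefix lengths are seen to diverge. The only cosmetic difference is that for the dual sequence you invoke the greedy condition $d_n\ge N$ to make $\sighat_n(1)=0^N$ a prefix of $\sighat_n(0)$, whereas the paper gets by with the weaker fact that both images begin with the letter $0$.
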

	
	\begin{proof}
	    Consider the directive sequence $\bsigma_x=(\sigma_n)_{n\geq 1}$, and note that $\sigma_n(a)$ starts with $0$ for $a \in \{0,1\}$ and for every $n \geq 1$. This implies that $\sigma_1 \circ \sigma_2 \circ \cdots \circ\sigma_n(1)$ is a prefix of $\sigma_1 \circ \sigma_2 \circ \cdots \circ\sigma_{n+1}(1)$ for each $n\in \N$. Moreover, by the definition of $(\sigma_n)_{n\in\N}$, the length of the words $\sigma_1 \circ \sigma_2 \circ \cdots \circ\sigma_n(1)$ tends to infinity for $n\to\infty$ (for a detailed study of this we refer to Section~\ref{section:letterfrequency}). Suppose that $\nu$ is a limit sequence of $\bsigma_x$. By the definition of a limit sequence in \eqref{eq:limworddef} there exist $\nu^{(1)}, \nu^{(2)}, \ldots \in \{0,1\}^\N$ such that $\nu=\nu^{(1)} = \sigma_1\circ \cdots \circ \sigma_{n}(\nu^{(n+1)})$ for each $n$. Because $\sigma_{n}(\nu^{(n+1)})$ starts with $0=\sigma_n(1)$ this implies that $\nu$ starts with $\sigma_1 \circ \sigma_2  \circ \cdots \circ\sigma_{n-1}(0)=\sigma_1 \circ \sigma_2  \circ \cdots \circ\sigma_{n}(1)$. Thus the finite words $(\sigma_1 \circ \sigma_2  \circ \cdots \circ\sigma_n(1))_{n\geq 1}$ form a nested sequence of prefixes of $\nu$ and, hence, 
	      \[
			\nu = \lim_{n \rightarrow \infty} \sigma_1 \circ \sigma_2  \circ \cdots \circ\sigma_n(111\cdots),
		\]
where the limit is taken with respect to the topology we defined on $\cA^\N$. Therefore, there is exactly one $S$-adic sequence $\nu$ for the directive sequence $\bsigma_x$, namely the sequence $\nu=\omega(x,N)$, and the finite words $(\sigma_1 \circ \sigma_2 \circ \cdots \circ\sigma_n(1))_{n\geq 1}$ form a nested sequence of prefixes of $\omega(x,N)$.
	    
The assertions about $\ompr(x,N)$ follows along the same lines.
%
	\end{proof}

    We take a look at an example: let $N=2$ and $x= [0;2,3,4,5,\ldots]_2$, then the corresponding words obtained are
	\[ \omega(x,N) = 0011001100110000110011001100001100110011000011001100110000110011001\cdots
	\]
	and
	\[ \ompr(x,N) = 0010010010000100100100001001001000010010010000100100100100100001001\cdots
	\]
	By Definition~\ref{def:sadic}, if $\omega(x,N)$ is the $S$-adic sequence of the directive sequence $\bsigma_x=(\sigma_n)_{n\geq 1}$, then there exist $\omega^{(1)}, \omega^{(2)},\ldots\in\cA^\N$ such that $\omega^{(1)}=\omega(x,N)$ and $\omega^{(n)}=\sigma_{n}(\omega^{(n+1)})$ for all $n \geq 1.$ It is not hard to see that $\omega^{(n)} =  \omega(T^{n-1}_N(x),N)$ because $T_N(x) = [0; d_2, d_3, \ldots]_N.$ The same holds for $\ompr(x,N).$
	
	
	We introduce the following notation.
	Define the words \[\subw{0}:=1,\quad\subw{n}:= \sigma_1\circ\sigma_2\circ\cdots\circ \sigma_n(1)\quad \mbox{ for }n\geq 1.\]
	By the definition of the substitutions $\sigma_n$  in Definition~\ref{def:subst}~(1) the words $\subw{n}$  satisfy the recurrence 
	\begin{equation}\label{recurrencesigma}
		\subw{n+1}=\subw{n}^{d_n}\subw{n-1}^N \quad \mbox{ for }n\geq 1.
	\end{equation}
	The first iterations are
	\begin{equation*}
		\begin{split}
			\subw{0}&=1,\quad \subw{1}=0,\quad\subw{2}=0^{d_1}1^N,\quad \subw{3}=\underbrace{0^{d_1}1^N0^{d_1}1^N\cdots 0^{d_1}1^N}_{d_2 \mbox{ times}}0^N,\\
			\subw{4}&=\underbrace{\underbrace{0^{d_1}1^N0^{d_1}1^N\cdots 0^{d_1}1^N}_{d_2 \mbox{ times}}0^N\cdots \underbrace{0^{d_1}1^N0^{d_1}1^N\cdots 0^{d_1}1^N}_{d_2 \mbox{ times}}0^N}_{d_3 \mbox{ times}} \underbrace{ 0^{d_1}1^N \cdots 0^{d_1}1^N}_{N \mbox { times }}.
		\end{split}
	\end{equation*}
	Analogously, let \[\sipr_0:=1,\quad \sipr_1:=0,\quad  \sipr_{n+1}:=\sighat_1 \circ \sighat_2\circ \cdots \circ\sighat_n(0)\quad \mbox{ for }n\geq 1.\] 
	Then from Definition~\ref{def:subst}~(2) we immediately see that
	\begin{equation}\label{recurrencesighat}
		\sipr_{n+1}=\sipr_n^{d_n}\sipr_{n-1}^N \quad\mbox{ for } n\geq 2.
	\end{equation}
	The first iterations are
	\begin{equation*}
		\begin{split}
			\sipr_{0}&=1,\quad \sipr_{1}=0,\quad\sipr_{2}=0^{d_1}1,\quad \sipr_{3}=\underbrace{0^{d_1}10^{d_1}1\cdots 0^{d_1}1}_{d_2 \mbox{ times}}0^N,\\
			\sipr_{4}&=\underbrace{\underbrace{0^{d_1}10^{d_1}1\cdots 0^{d_1}1}_{d_2 \mbox{ times}}0^N\cdots \underbrace{0^{d_1}10^{d_1}1\cdots 0^{d_1}1}_{d_2 \mbox{ times}}0^N}_{d_3 \mbox{ times}} \underbrace{ 0^{d_1}1 \cdots 0^{d_1}1}_{N \mbox { times }}.
		\end{split}
	\end{equation*}
	
	Let $\omega\in\{0,1\}^\N$ be a sequence over two letters and $a,b\in\{0,1\}$ with $a\not=b$. We say that a factor $v=a\cdots a$ of $\omega$ is a \textit{maximal $a$ block} of $\omega$ if either $vb$ is a prefix of $\omega$ or $bvb$ is a factor of $\omega$. The recurrences \eqref{recurrencesigma} and \eqref{recurrencesighat} immediately allow us to characterize maximal $a$ blocks for $\omega(x,N)$ and $\ompr(x,N)$ according to the following lemma.
	
	\begin{lemma}\label{lem:maxFactor} 
		Let $N\ge 1$ and $x=[0;d_1,d_2,\ldots]_N$ be given. 
		\begin{enumerate}
			\item[(1a)] $1^N$ is the only maximal $1$ block of $\omega(x,N)$.
			\item[(1b)] $0^{d_1}$ and $0^{d_1+N}$  are the only maximal $0$ blocks of $\omega(x,N)$.
			\item[(2a)] $1$ is the only maximal $1$ block of $\ompr(x,N)$.
			\item[(2b)] $0^{d_1}$ and $0^{d_1+N}$  are the only maximal $0$ blocks of $\ompr(x,N)$.
		\end{enumerate}
	\end{lemma}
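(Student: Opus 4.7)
The plan is to exploit the recursive identities $\omega(x,N) = \sigma_1(\omega(T_N(x),N))$ and $\ompr(x,N) = \sighat_1(\ompr(T_N(x),N))$, which follow from the fact that the directive sequence associated with $T_N(x)$ is the shift $(\sigma_n)_{n\ge 2}$ (respectively $(\sighat_n)_{n\ge 2}$) of that associated with $x$, together with the continuity of substitutions. Writing $\omega^{(2)}:=\omega(T_N(x),N)$ and $\ompr^{(2)}:=\ompr(T_N(x),N)$, I will trace each maximal run in $\omega(x,N)$ (resp.\ $\ompr(x,N)$) back to a factor in $\omega^{(2)}$ (resp.\ $\ompr^{(2)}$) and read off the possible lengths.

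I would prove (1a) first. Since $\sigma_1(1)=0$ contains no $1$, every letter $1$ of $\omega(x,N)$ comes from an image $\sigma_1(0)=0^{d_1}1^N$. Inside such an image the $1^N$ run is bounded on the left by its own $0^{d_1}$ prefix, and on the right by the first letter of the $\sigma_1$-image of the next letter of $\omega^{(2)}$. Both $\sigma_1(0)$ and $\sigma_1(1)$ begin with $0$, so $1^N$ is always sandwiched by $0$'s, proving that every maximal $1$-block equals $1^N$. Claim (2a) follows by the same reasoning applied to $\sighat_1$: $\sighat_1(0)=0^{d_1}1$ contributes a single $1$, and both $\sighat_1(0)$ and $\sighat_1(1)=0^N$ start with $0$, so each $1$ of $\ompr(x,N)$ is isolated.

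For (1b), applying (1a) to $T_N(x)$ shows that every maximal $1$-block of $\omega^{(2)}$ has length exactly $N$. I then classify a maximal $0$-run of $\omega(x,N)$ by the factor of $\omega^{(2)}$ producing it. The image $\sigma_1(0)$ contributes only its $d_1$ leading zeros to such a run (because $1^N$ immediately follows inside that image), whereas $\sigma_1(1)=0$ contributes a single $0$. Hence every maximal $0$-run of $\omega(x,N)$ terminates on the right at the $0^{d_1}$ prefix of some $\sigma_1(0)$ image, and extends leftwards through the images of the preceding letters of $\omega^{(2)}$ until a $1$ is encountered. The letter preceding in $\omega^{(2)}$ the $0$ that produced this $0^{d_1}$ is either $0$, in which case the previous image ends with $1^N$ and the run is $0^{d_1}$; or it is a $1$, which by (1a) for $T_N(x)$ belongs to a maximal $1$-block of length exactly $N$ preceded by a $0$, contributing $\sigma_1(1^N)=0^N$ and yielding the run $0^{N+d_1}$. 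The prefix of $\omega(x,N)$ equals $\subw{2}=0^{d_1}1^N$, so the initial maximal $0$-run is $0^{d_1}$ and is already accounted for. Claim (2b) follows by the identical argument with $\sighat_1$: $\sighat_1(0)$ contributes its $d_1$ leading zeros and $\sighat_1(1)=0^N$ contributes $N$ zeros, while (2a) applied to $T_N(x)$ forces every $1$ in $\ompr^{(2)}$ to be isolated, so at most one $\sighat_1(1)$ image attaches on the left, giving exactly the two alternatives $0^{d_1}$ and $0^{d_1+N}$.

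The main difficulty is really just organising the case analysis cleanly; the crucial structural input, established at the very start, is that both $\sigma_1$ and $\sighat_1$ map every letter to a word beginning with $0$. This makes the right-extension of $1$-runs and $0$-runs automatic, and reduces each block claim to understanding what can happen on the left, which is controlled by the shape of $\omega^{(2)}$ (resp.\ $\ompr^{(2)}$) coming from (1a)/(2a). One must also remember to treat the initial maximal $0$-block of $\omega(x,N)$ and $\ompr(x,N)$ separately, but this is immediate from the fact that the prefix $\subw{2}$ (resp.\ $\sipr_2$) begins with exactly $d_1$ zeros.
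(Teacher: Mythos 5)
Your proof is correct and rests on the same structural fact the paper invokes: the decomposition of $\omega(x,N)$ (resp.\ $\ompr(x,N)$) into the images $\sigma_1(0)=0^{d_1}1^N$ and $\sigma_1(1)=0$ (resp.\ $\sighat_1(0)=0^{d_1}1$ and $\sighat_1(1)=0^N$), which the paper states as immediate from the recurrences \eqref{recurrencesigma} and \eqref{recurrencesighat} without writing out a proof. Your version, using the desubstitution identity $\omega(x,N)=\sigma_1(\omega(T_N(x),N))$ and feeding (1a)/(2a) for the shifted word back into the left-extension analysis for (1b)/(2b), is a clean and complete write-up of that same idea.
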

	
	Note that the NCF sequence $\omega(x,N)$ can be obtained from the dual NCF sequence $\ompr(x,N)$ by substituting each occurrence of $1$ in $\ompr(x,N)$ by $1^N$. This is true because the sequences $(\subw{n})_{n\geq 1}$ and $(\sipr_n)_{n\geq 1}$ satisfy the same recurrence formula, and the only difference is that $\subw{2}=0^{d_1}1^N$ and $\sipr_2=0^{d_1}1$.
	
	Formally, consider the substitution
	\begin{equation}\label{eq:sdef}
		\tau:
		\begin{cases}
			0 \mapsto 0,\\
			1 \mapsto 1^N.
		\end{cases}
	\end{equation}
	Then 
	\begin{equation}\label{omegaSomega}
		\omega(x,N) = \tau(\ompr(x,N))
	\end{equation}
	holds. Hence, the dual NCF has essentially the ``same shape'' as the regular one but is a bit easier to work with. We will later use this correspondence to be able to transfer properties of one sequence to the other. The sequence $\omega(x,N)$ has the advantage that the ratio of the letter frequencies converges to $x$, which makes it a more natural choice.

	\subsection{Letter frequency and generalized right eigenvector}\label{section:letterfrequency}
	
	Let $N \geq 1$ and consider the expansion $x=[0;d_1,d_2,\ldots]_N \in [0,1] \setminus \Q$. Define the convergents $c_n=\frac{p_n}{q_n}$ for $n \geq 1$ as 
	\[ \frac{p_n}{q_n}:= [0;d_1,d_2,\ldots,d_n]_N, \] 
	and choose $p_n$ and $q_n$ so that they satisfy the following recurrence relations:
	\begin{equation}\label{convergents}
	    \begin{split}
		    p_{-1}=1, \quad p_0=0, \quad p_n=d_np_{n-1}+Np_{n-2},\\
	        q_{-1}=0, \quad q_0=1, \quad q_n=d_nq_{n-1}+Nq_{n-2}.
		\end{split}
	\end{equation}
	
	Then we have $x=\lim_{n\to\infty}\frac{p_n}{q_n}$. Note that for the classical case $N=1$, $p_n$ and $q_n$ are coprime for all $n$. For $N\geq 2$ this is not necessarily the case, but still $c_n=\frac{p_n}{q_n}$ for $n \geq 1$. This can be shown following the same lines as in the well known setting of the regular continued fraction algorithm.

	Consider the directive sequence $\bsigma_x=(\sigma_n)_{n\geq 1}$ and the corresponding sequence of incidence matrices $(M_{\sigma_n})_{n\geq1}.$ Set $M_{[1,n]}=M_{\sigma_1}M_{\sigma_2}\cdots M_{\sigma_n}$. 
	Then we find that 
	\[
	M_{[1,n]}=\left(
	\begin{matrix}
		q_n & q_{n-1} \\
		p_n & p_{n-1}
	\end{matrix}
	\right).
	\]
	Since $\det(M_{[1,n]})=\det(M_{\sigma_1})\det(M_{\sigma_2})\cdots \det(M_{\sigma_n})=(-N)^n$, we have $q_np_{n-1}-p_nq_{n-1}=(-N)^n$ which we will use in Section \ref{sec:dynamics}. Furthermore,	since ${\bf l}(1)=\, ^t\!(0,1)$, we have
	\[ {\bf l}(\sigma_1 \circ  \cdots \circ\sigma_n(1))= M_{[1,n]}  \left(
	\begin{matrix}
		0 \\
		1
	\end{matrix}
	\right)= \left(
	\begin{matrix}
		q_{n-1} \\
		p_{n-1}
	\end{matrix}
	\right) \]
	and, hence, we gain
	\begin{equation}\label{eq:letterfreq}
		\lim_{n\to\infty} \frac{|\sigma_1 \circ \cdots \circ\sigma_n(1)|_1}{|\sigma_1 \circ \cdots \circ\sigma_n(1)|_0} =\lim_{n\to\infty} \frac{p_{n-1}}{q_{n-1}}=x.
	\end{equation}
	
	We define the {\it frequency } of a letter $a\in\cA$ in the sequence $\omega\in\cA^\N$ as
	\[ f_a := \lim_{|p|\to\infty} \frac{|p|_a}{|p|}, \]
	provided that the limit, which is taken over the prefixes $p$ of $\omega$, exists. If the limit does not exist, we say that $a$ does not have a frequency in $\omega$.  
	Equation \eqref{eq:letterfreq} implies that the $N$-continued fraction sequence $\omega(x,N)$ has letter frequencies and the  {\it frequency vector} is given by $(f_0,f_1)=\left(\frac{1}{x+1},\frac{x}{x+1}\right).$ We show next that this vector is in fact a generalized right eigenvector for~$\bsigma_x.$ We will use the following auxiliary lemma.
	
	\begin{lemma}[Birkhoff~\cite{Bir:57}] \label{lem:355}
		Let $(B_n)_{n\geq 1}$ be a sequence of matrices with nonnegative entries. If there exists a matrix $B$ with strictly positive entries, an integer $h>0$, and a strictly increasing sequence $(m_i)_{i\geq 1}$ of positive integers such that $B=B_{m_i}\cdots B_{m_i+h}$ for each $i \geq 1$, then $(B_n)_{n\geq 1}$ has a generalized right eigenvector.
	\end{lemma}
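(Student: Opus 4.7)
The plan is to apply the Birkhoff--Hopf contraction principle for the Hilbert projective metric on the positive cone $\R^d_+$. Recall that this metric is defined, for $\bu,\bv \in \R^d_+$, by
\begin{equation*}
  d_H(\bu,\bv)=\log \max_{i,j}\frac{u_iv_j}{u_jv_i}.
\end{equation*}
It descends to a genuine metric on the projectivised positive cone $\mathbb{P}(\R^d_+)$. A classical theorem of Birkhoff states that every nonnegative matrix acts as a $d_H$-non-expansion, while every strictly positive matrix $B$ is a strict contraction whose Birkhoff coefficient $\tau(B)<1$ is given by an explicit formula in terms of the entries of $B$; moreover $B\,\R^d_+$ has finite $d_H$-diameter.

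Write $P_n := B_1 B_2 \cdots B_n$ (with $P_0 := I$) and consider the nested family of cones $C_n := P_n \R^d_+$; by nonnegativity they satisfy $C_{n+1}\subseteq C_n$. Denote by $\Delta_n$ the $d_H$-projective diameter of $C_n$. After passing to a subsequence of $(m_i)_{i\ge 1}$, I may assume that the windows $[m_i,m_i+h]$ are pairwise disjoint, so that in particular $m_{i+1}-1\ge m_i+h$.

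For the first occurrence, $P_{m_1+h}=P_{m_1-1}\cdot B$, and the non-expansiveness of $P_{m_1-1}$ yields $\Delta_{m_1+h}\le \mathrm{diam}_{d_H}(B\,\R^d_+)<\infty$. For each subsequent $i$ the same factorisation, combined with Birkhoff's contraction principle, gives
\begin{equation*}
  \Delta_{m_i+h}\le \tau(B)\cdot \Delta_{m_i-1}.
\end{equation*}
Since the diameters are monotone non-increasing and the windows are disjoint, $\Delta_{m_{i+1}-1}\le\Delta_{m_i+h}$, and an immediate induction yields $\Delta_{m_i+h}\le \tau(B)^{i-1}\Delta_{m_1+h}\to 0$. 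Consequently the projective diameters of the nested cones $\mathbb{P}(C_n)$ shrink to $0$, so $\bigcap_{n\ge 1}\mathbb{P}(C_n)$ is a single point; lifting it to $\R^d_+$ and rescaling in the $\|\cdot\|_1$-norm provides a unique $\bu\in\R^d_+$ with $\|\bu\|_1=1$ such that $\bigcap_{n\ge 1}B_1\cdots B_n\,\R^d_+=\R_+\bu$, as required.

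The main subtlety is that the whole cone $\R^d_+$ has infinite Hilbert diameter, so the geometric decay argument can only begin after the first positive window has rendered the image cone bounded in $d_H$; once this is arranged, the rest is a routine iteration of Birkhoff's contraction together with the monotonicity of $(\Delta_n)$.
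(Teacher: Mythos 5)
The paper does not actually prove this lemma: it is quoted from Birkhoff, with pointers to Furstenberg and to the proof of a slightly weaker statement elsewhere, so there is no in-paper argument to compare against. Your Hilbert-metric proof is the standard route, and its architecture is sound: the first positive window makes the image cone of finite projective diameter, each further window contracts that diameter by a fixed factor $\tau(B)<1$, and nested cones with diameters tending to zero intersect in a single ray.

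One step is, however, justified incorrectly as written, even though the asserted inequality happens to be true. In the factorisation $P_{m_i+h}=P_{m_i-1}B$ the positive matrix $B$ acts on the \emph{right}, i.e.\ on the source cone $\R^d_+$ (of infinite diameter), whereas $\Delta_{m_i-1}$ measures the \emph{image} cone $P_{m_i-1}\R^d_+$. What the contraction principle gives directly is $d_H(P_{m_i-1}\bx,P_{m_i-1}\by)\le\tau(P_{m_i-1})\,d_H(\bx,\by)$ for $\bx,\by\in B\R^d_+$, hence $\Delta_{m_i+h}\le\tau(P_{m_i-1})\,\mathrm{diam}_{d_H}(B\R^d_+)$ --- the roles of the two factors are the opposite of what you claim. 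Your inequality $\Delta_{m_i+h}\le\tau(B)\Delta_{m_i-1}$ can be salvaged, but only by invoking the full Birkhoff formula $\tau(A)=\tanh\bigl(\tfrac14\,\mathrm{diam}_{d_H}(A\R^d_+)\bigr)$ together with $\tau(PB)\le\tau(P)\tau(B)$ and $\operatorname{artanh}(\lambda t)\le\lambda\operatorname{artanh}(t)$ for $\lambda\in[0,1]$; none of this is ``the same factorisation combined with the contraction principle.'' The cleaner and standard bookkeeping contracts from the right: writing $P_{m_{i+1}+h}=P_{m_i+h}\,Q_i\,B$ with $Q_i=B_{m_i+h+1}\cdots B_{m_{i+1}-1}$, the rightmost copy of $B$ produces a set of finite diameter $D=\mathrm{diam}_{d_H}(B\R^d_+)$ and each of the $i$ copies of $B$ to its left then contributes a factor $\tau(B)$, giving $\Delta_{m_{i+1}+h}\le\tau(B)^i D$. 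Two smaller points you gloss over: the argument (and indeed the statement) tacitly requires every $B_n$ to map the open cone into itself, i.e.\ to have no zero row --- this holds for the matrices $A$ and $D$ in the paper's application; and at the end you are intersecting \emph{open} cones, so you should pass to closures (compact inside the open simplex once the diameter is finite) and use $\overline{C_{m_{i+1}+h}}\subset C_{m_i+h}$ to see that the limit ray genuinely lies in every $C_n$. These are repairable presentation gaps rather than a failure of the method.
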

	
    See also Furstenberg~\cite{Furstenberg:60} and for instance the proof of \cite[Proposition~3.5.5]{T} where a  slightly weaker statement is given.	Lemma \ref{lem:355} is used in the proof of the following result.	
	
	\begin{lemma}\label{righteigen}
		The directive sequence $\bsigma_x=(\sigma_n)_{n\geq 1}$ of NCF substitutions has a generalized right eigenvector given by $(f_0,f_1)=\left(\frac{1}{x+1},\frac{x}{x+1}\right).$ 
	\end{lemma}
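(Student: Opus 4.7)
The plan is to compute the intersection $\bigcap_{n\geq 1} M_{[1,n]} \R^2_+$ directly, exploiting the particularly explicit two-dimensional form of $M_{[1,n]}$ in terms of convergents. The full strength of Lemma~\ref{lem:355} is not actually indispensable here: Lemma~\ref{lem:355} would require a strictly positive matrix $B$ to appear infinitely often as a block product $M_{\sigma_{m_i}}\cdots M_{\sigma_{m_i+h}}$, and since the NCF digits $(d_n)_{n\geq 1}$ can be unbounded, a pigeonhole argument on consecutive digit pairs need not work. Instead one can argue directly using only the convergence of the classical convergents $c_n$ to $x$ already recorded in~\eqref{eq:letterfreq}.

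First, I would observe that $M_{[1,n]} \R^2_+$ is the closed convex cone spanned by the two columns $\tr{(q_n,p_n)}$ and $\tr{(q_{n-1},p_{n-1})}$ of $M_{[1,n]}$, and that the inclusion $M_{[1,n+1]}\R^2_+ \subseteq M_{[1,n]} \R^2_+$ is immediate from the fact that $M_{\sigma_{n+1}}$ has nonnegative entries (so $M_{\sigma_{n+1}} \R^2_+ \subseteq \R^2_+$). Thus the cones form a nested decreasing sequence of closed cones. Next, the spanning rays of $M_{[1,n]}\R^2_+$ have slopes $p_n/q_n=c_n$ and $p_{n-1}/q_{n-1}=c_{n-1}$, both of which tend to $x$ as $n\to\infty$ by the very definition of $x=[0;d_1,d_2,\ldots]_N$ as the limit of its convergents. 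Since the angular width of the cone collapses to zero while each cone contains points arbitrarily close to the direction $\tr{(1,x)}$, the intersection equals the single ray $\R_+\cdot\tr{(1,x)}$. Normalising this direction in the $1$-norm yields $\tr{(1,x)}/(1+x)=\tr{(f_0,f_1)}$, as required.

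The only step that requires genuine checking is that the nested cones really contract to a one-dimensional ray rather than stabilising at a wider cone; in two dimensions this reduces to the convergence of both edge slopes $c_n$ and $c_{n-1}$ to the common limit $x$, which is built into the definition of the NCF expansion, so no new estimate is required. A variant using Birkhoff's projective contraction on the Hilbert metric would also work, since $M_{\sigma_n}M_{\sigma_{n+1}}$ has strictly positive entries for every $n$, but in this $2\times 2$ setting the convergent-based argument is essentially tautological and therefore cleaner.
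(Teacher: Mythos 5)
Your argument is correct, but it is genuinely different from the paper's. The paper does not compute the cone intersection directly: it factors each incidence matrix as $M_{\sigma_n}=A^{d_n-N}D$ with $A=\left(\begin{smallmatrix}1&1/N\\0&1\end{smallmatrix}\right)$ and $D=\left(\begin{smallmatrix}N&1\\N&0\end{smallmatrix}\right)$, replaces $(M_{\sigma_n})$ by the refined sequence of $A$'s and $D$'s (which has the same nested cones), and then applies Birkhoff's criterion (Lemma~\ref{lem:355}) to the recurring positive block $D^2$ or $DA$; this yields \emph{existence} of a generalized right eigenvector, after which the identification with the frequency vector is outsourced to a cited lemma. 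Your route — observing that $M_{[1,n]}\R^2_+$ is the cone spanned by $\tr{(q_n,p_n)}$ and $\tr{(q_{n-1},p_{n-1})}$, that the cones are nested, and that both edge slopes $c_n,c_{n-1}$ converge to $x$ — gets existence and the explicit value $\R_+\tr{(1,x)}$ in one stroke, and your remark about why Birkhoff's lemma cannot be applied naively to $(M_{\sigma_n})$ when the digits are unbounded correctly identifies the obstruction the paper's $A$/$D$ decomposition is designed to evade. The trade-off is that your argument is tied to the two-dimensional picture, whereas the paper's technique survives in higher dimensions. One small point you should tighten: since $\R^2_+$ is the \emph{open} quadrant, the nested cones are open, and "angular width tends to zero" only shows the intersection is contained in $\R_+\tr{(1,x)}$ — a priori it could be empty if the limiting ray sat on the boundary of every cone. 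It does not: writing $x=\frac{p_n+p_{n-1}t}{q_n+q_{n-1}t}$ with $t=T_N^n(x)\in(0,1)$ gives $\tr{(1,x)}$ as a positive multiple of $M_{[1,n]}\tr{(1,t)}$ with $\tr{(1,t)}\in\R^2_+$, so the ray lies in every open cone and the intersection is exactly $\R_+\tr{(1,x)}$, which normalizes to $(f_0,f_1)$.
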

	
	\begin{proof}
		Let $$A=\left(\begin{matrix}
			1 & \frac{1}{N}\\
			0 & 1
		\end{matrix}\right)\quad \text{and}  \quad 
		D=\left(
		\begin{matrix}
			N & 1 \\
			N & 0 
		\end{matrix}
		\right). $$
		Note that, for every $n\geq 1$, 
		\[
		M_{\sigma_n}= \begin{pmatrix}
			d_n & 1 \\
			N & 0 
		\end{pmatrix} = \begin{pmatrix}
			1 & \frac{d_n - N}{N}\\
			0 & 1
		\end{pmatrix}\cdot \begin{pmatrix}
			N & 1 \\
			N & 0
		\end{pmatrix}= A^{d_n-N}D. 
		\]
		
		Consider the sequence of matrices 
		
		\[
		(M'_n)_{n\geq 1}=(\underbrace{A,\ldots,A}_{d_1-N \text{ times}},D,\underbrace{A,\ldots,A}_{d_2-N \text{ times}},D,\ldots),
		\]
		
		which satisfies $\prod_{k=1}^{d_1-N+1} M'_k = M_{\sigma_1},$ and the product of the next block of $d_2-N+1$ gives $M_{\sigma_2}$, etc. Therefore we find $\bigcap_{n\geq 1}M_{\sigma_n} \R^2_+=\bigcap_{n\geq 1} M'_n \R^2_+$. Thus $\bsigma_x=(\sigma_n)_{n\geq 1}$ has a generalized right eigenvector if and only if $(M'_n)_{n\geq 1}$ has one. 
		
		If the sequence of NCF digits $(d_n)_{n\geq 1}$ is eventually equal to $N$, then $(M'_n)_{n\geq 1}$ is eventually equal to $D$. Because $D^2$ has only positive entries, the conditions of Lemma~\ref{lem:355} are satisfied and $(M'_n)_{n\geq 1}$ has a generalized right eigenvector. Otherwise, the sequence $(M'_n)_{n\geq 1}$ changes infinitely many times between $A$ and $D$, hence there exists a strictly increasing sequence of integers $(m_i)_{i\geq 1}$ such that $M'_{m_i}M'_{m_i+1}=DA$, which has strictly positive entries. Thus the existence of a generalized right eigenvector follows again from Lemma~\ref{lem:355}. 
		
		We conclude that $(M'_n)_{n\geq 1}$, and hence $\bsigma_x$, have a generalized right eigenvector. It follows from the proof of \cite[Lemma 3.5.10]{T} that, whenever $\bsigma_x$ has a generalized right eigenvector, its entries correspond to that of the letter frequency vector of the limit sequence. This finishes the proof.
	\end{proof}
	
	\subsection{Substitution selection for the $N$-continued fraction algorithm}\label{sec:subselection}
	
	We now want to relate the sequences of substitutions $\bsigma_x$ and ${\widehat{\bsigma}}_x$ to the NCF algorithm. It turns out that these sequences of substitutions can be regarded as a combinatorial version of the NCF algorithm and its dual. For Sturmian sequences and their directive sequences, their multi-faceted interplay with the classical continued fraction algorithm is well-known (see {\it e.g.}\ Arnoux and Rauzy~\cite{Arnoux-Rauzy:91} or Arnoux and Fisher~\cite{AF:01}). Berth\'e {\it et al.}~\cite{BST2} generalized this to unimodular multidimensional continued fraction algorithms by introducing the concept of  {\it substitution selection}. The novelty of our setting is that we are working with nonunimodular matrices.

	Let $N\geq 1$. We want to define a version of the NCF algorithm that works on the subset $\mathbb{P}_<=\{[1:x]\,:\, 0<x<1 \}$ of the projective space $\mathbb{P}$.  Let ${\bf x}=[1:x]\in\mathbb{P}_<$ and consider the matrix 
	$$C_N({\bf x})=
	\left(
	\begin{matrix}
		\left\lfloor \frac{N}{x} \right\rfloor & N \\
		1 & 0 \\
	\end{matrix}
	\right).$$
	Then the map
	\[ 
	G_N\,:\,  \mathbb{P}_< \rightarrow \mathbb{P}_<, \quad
	{\bf x} \mapsto {^t}C_N({\bf x})^{-1} {\bf x}
	\]
	is called the {\it linear multiplicative $N$-continued fraction algorithm}. It has the form
	\[ 
	G_N([1:x])=
	\begin{pmatrix}
		0 & \frac1N \\ 1 & -\frac1N \left\lfloor \tfrac{N}{x} \right\rfloor
	\end{pmatrix}\cdot [1:x]
	= \left[ \frac xN : 1-\frac xN  \left\lfloor \frac{N}{x} \right\rfloor \right]=
	\left[ 1 : \frac{N}{x}-\left\lfloor \frac{N}{x} \right\rfloor  \right],
	\]
	which is a projectivization of $T_N$ in the sense that the original mapping $T_N$ can be seen in the second coordinate of $G_N$ if the first coordinate is normalized to $1$. If $x=[0;d_1,d_2,\ldots]_N \notin \Q $ then we have for each $n\geq 1$ that
	\begin{equation}\label{eq:CNandM}
	 {^t}C_N ( G_N^{n-1} ( {\bf x} ) ) = {^t}C_N([ 1 : T_N^{n-1}(x) ]) = \left(
	\begin{matrix}
		\left\lfloor \frac{N}{T_N^{n-1}(x)} \right\rfloor & 1 \\
		N & 0 \\
	\end{matrix}
	\right) =  \left(
	\begin{matrix}
		d_n & 1 \\
		N & 0\\
	\end{matrix}
	\right)=M_{\sigma_n}.
	\end{equation}
	Iteration yields 
	\[
	\begin{split}
		\bx &= \, ^t\! C_N({\bf x})  G_N( \bx) =  \, ^t\! C_N({\bf x}) \, ^t\! C_N(G_N({\bf x}))  G_N^2 (\bx)
		=\dots = \, ^t\! C_N({\bf x})\cdots  \, ^t\! C_N(G_N^{n-1}({\bf x}))  G_N^{n} (\bx) 
	\end{split}
	\]
	and therefore
	\[
	\begin{split}
		\bx = M_{\sigma_1} G_N( \bx) = M_{\sigma_1} M_{\sigma_2} G_N^2 (\bx)
		=\dots = M_{\sigma_1}\cdots M_{\sigma_n} G_N^{n} (\bx).
	\end{split}
	\]
	Thus the NCF algorithm applied to $x=[0;d_1,d_2,\ldots]_N$ produces the incidence matrices of the substitutions $\sigma_{1},\sigma_{2},\ldots$ given in Definition~\ref{def:subst}~(1). Moreover, by Lemma~\ref{righteigen}, the ray $[1:x]$ corresponds to the direction of the generalized right eigenvector of these substitutions. In this sense, the directive sequence $\bsigma_x$ can be regarded as a {\it substitution selection} of the NCF algorithm (see the definition of substitution selection in \cite[Definition 2.2]{BST2}).	
	Indeed, given $ \cS=\{\sigma_n\,:\,n\geq 1 \}$, we consider the map
	\[ 
	{\bf \varphi} \,:\, [0,1] \setminus \Q \rightarrow \cS^\N,\quad {x} \mapsto \bsigma_x=(\sigma_n)_{n\geq 1}  
	\]
	and endow the space $\cS^\N$ with the left shift $\Sigma$, that is, $\Sigma((\sigma_n)_{n\geq 1})=(\sigma_{n+1})_{n\geq 1}$. Then the following diagram commutes:
	
	\[
	\xymatrix{
		[0,1] \setminus \Q \ar[r]^{T_N} \ar[d]_{\varphi} & [0,1] \setminus \Q \ar[d]^{\varphi} \\
		\cS^\N \ar[r]^{\Sigma}&\cS^\N
	}
	\]
	Thus we can associate the limit sequence $\omega(x,N)$ of $\bsigma_x$ to the NCF expansion of $x$ in the same way as Sturmian sequences are associated with the classical continued fraction expansion of $x$, {\it e.g.}\ in \cite{A,AF:01,T}.

	As in the classical case (see~\cite{AF:01}) we go one step further and associate the symbolic sequence $\ompr(x,N)$ to the past of the natural extension of $T_N$. The associated directive sequence $\widehat{\bsigma}_x$ is in some sense a dual of $\bsigma_x$.
	Let ${\bf x}=[1:x]$ and ${\bf y}=[1:y]$ be elements of $\mathbb{P}_<$. Following \cite{AL}, a natural extension of the map $G_N$ is given by 
	\begin{equation}\label{eq:GnNatl}
		\widetilde{G}_N:\mathbb{P}_<^2\rightarrow \mathbb{P}_<^2,\quad
		\left(\begin{matrix}
			{\bf x}\\
			{\bf y}
		\end{matrix}
		\right)\mapsto \left(\begin{matrix}
			^t C_N({\bf x})^{-1} & 0 \\
			0 &  C_N({\bf x})
		\end{matrix}
		\right)
		\left(\begin{matrix}
			{\bf x}\\
			{\bf y}
		\end{matrix}
		\right)=\left(\begin{matrix}
			\left[ 1 : \frac{N}{x}-\left\lfloor \frac{N}{x} \right\rfloor\right]\\[1mm]
			\left[ 1 : \frac{1}{N\cdot y+\left\lfloor \tfrac{N}{x} \right\rfloor }\right]
		\end{matrix}
		\right).
	\end{equation}
	Taking second coordinates and inspecting the range of $\mathbf{y}$ this immediately yields the following result.
	
	\begin{proposition}\label{prop:natl}
		A  natural extension of the map $T_N : [0,1]  \rightarrow [0,1] $ is given by 
		\begin{equation}
		    \begin{split}
		        \widetilde{T}_N:&[0,1] \times \left[0,\frac{1}{N}\right]  \rightarrow [0,1] \times  \left[0,\frac{1}{N}\right],\\
		            &(x,y)\mapsto\begin{cases}
		                \left( T_N(x), \frac{1}{N\cdot y+\left\lfloor \tfrac{N}{x} \right\rfloor }\right) & x \neq 0,\\
		                (0,0) & x = 0,
		           \end{cases}
		    \end{split}
		\end{equation}
		with $\frac{dx dy}{(1+xy)^2}$ as invariant measure. 
	\end{proposition}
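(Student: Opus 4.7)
The plan is to verify the claim by reading off the formula from the natural extension $\widetilde{G}_N$ defined in \eqref{eq:GnNatl}, and then checking the three things that the word ``natural extension'' requires: that the stated domain is preserved, that $\widetilde{T}_N$ is (a.e.) invertible with $(x,y)\mapsto x$ as a factor map onto $T_N$, and that the asserted density is invariant.

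First I would take second coordinates in \eqref{eq:GnNatl} with $\mathbf{x}=[1:x]$ and $\mathbf{y}=[1:y]$; the first component yields $T_N(x)$ and the second component yields $1/(Ny+\lfloor N/x\rfloor)$, which is the formula in the statement. To see that $[0,1]\times[0,1/N]$ is preserved, note that for $x\in(0,1]$ one has $d:=\lfloor N/x\rfloor\geq N$, so for $y\in[0,1/N]$ the denominator $Ny+d$ lies in $[d,d+1]\subset[N,\infty)$, forcing $1/(Ny+d)\in(0,1/N]$.

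Next I would exhibit the inverse on each strip where $d$ is constant. Since $Ny\in[0,1]$ we have $1/y'\in[d,d+1]$, which recovers $d=\lfloor 1/y'\rfloor$ for a.e.\ $y'$; solving then gives $\widetilde{T}_N^{-1}(x',y')=\bigl(N/(x'+d),\,(1-dy')/(Ny')\bigr)$. This also makes transparent that the projection onto the first coordinate intertwines $\widetilde{T}_N$ with $T_N$, so $T_N$ is a factor of $\widetilde{T}_N$ in the required sense.

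For the invariant measure I would work on the partition of $[0,1]\times[0,1/N]$ into the strips $\{(x,y):\lfloor N/x\rfloor=d\}$ for $d\geq N$, on each of which $\widetilde{T}_N$ is a diffeomorphism. A direct computation gives
\begin{equation*}
\left|\det\frac{\partial(x',y')}{\partial(x,y)}\right|=\frac{N}{x^2}\cdot\frac{N}{(Ny+d)^2}=\frac{N^2}{x^2(Ny+d)^2}.
\end{equation*}
The key algebraic identity is
\begin{equation*}
1+x'y'=1+\Bigl(\tfrac{N}{x}-d\Bigr)\cdot\tfrac{1}{Ny+d}=\frac{x(Ny+d)+N-dx}{x(Ny+d)}=\frac{N(1+xy)}{x(Ny+d)},
\end{equation*}
from which one obtains $(1+x'y')^{-2}\cdot|\det\partial(x',y')/\partial(x,y)|=(1+xy)^{-2}$. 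A standard change of variables applied strip by strip then shows that $dx\,dy/(1+xy)^2$ is $\widetilde{T}_N$-invariant. The main (and only mildly delicate) step is the identity for $1+x'y'$; everything else is bookkeeping.
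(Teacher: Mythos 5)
Your proposal is correct, and it starts exactly where the paper does: the paper's entire proof is the one sentence preceding the proposition, namely that one reads off the formula by taking second coordinates in \eqref{eq:GnNatl} and inspecting the range of $\mathbf{y}$, with the invariant measure inherited from the cited construction of $\widetilde{G}_N$ and the remark that this extension is isomorphic to the one in \cite{DKW}. Where you genuinely go beyond the paper is in substantiating the claims that the paper leaves to the reader or to references: you check that the rectangle $[0,1]\times[0,1/N]$ is preserved, you exhibit the a.e.\ inverse branch by recovering $d=\lfloor 1/y'\rfloor$ from the second coordinate, and above all you verify invariance of $dx\,dy/(1+xy)^2$ directly via the Jacobian on each strip $\{\lfloor N/x\rfloor=d\}$ together with the identity $1+x'y'=N(1+xy)/\bigl(x(Ny+d)\bigr)$; I checked that identity and the determinant $N^2/\bigl(x^2(Ny+d)^2\bigr)$, and both are right, so the cancellation $(1+x'y')^{-2}\,\lvert\det\partial(x',y')/\partial(x,y)\rvert=(1+xy)^{-2}$ is exactly the invariance condition for an a.e.\ invertible map. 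The one point neither you nor the paper addresses is the genuinely ``natural'' part of natural extension, i.e., that the iterates of the fibered $\sigma$-algebra generate the full one (minimality of the invertible extension); the paper implicitly delegates this to \cite{AL,DKW}, and if you wanted a fully self-contained argument you would need a sentence noting that the second coordinates of $\widetilde{T}_N^n(x,y)$ determine $d_1,\dots,d_n$ and hence separate points in the fiber as $n\to\infty$. Apart from that shared omission, your verification is complete and in fact more detailed than the paper's.
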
	
	
	We mention that in \cite{DKW}, a different natural extension of $T_N$, which is isomorphic to ours, is given (the difference is that $y$ is replaced by $\frac{y}{N}$). As we see from \eqref{eq:GnNatl}, the ``past'' of this natural extension (that is, the second coordinate of $ \widetilde{T}_N$) is associated with $C_N({\bf x})$. Moreover, $C_N ( G_N^{n-1} ( {\bf x} ) ) = M_{\sighat_n}$ for every $n \geq 1$ (this is straightforward from \eqref{eq:CNandM}). Therefore, the natural extension of the NCF algorithm is related to the incidence matrices of the dual substitutions $ \sighat_n$, and we can associate it with directive sequences of the form ${\widehat{\bsigma}}_x=(\sighat_n)_{n\geq 1}$ and, hence, with the $S$-adic words $\ompr(x,N)$. 
	
	These ``substitution selections'' superimpose a combinatorial structure on the NCF algorithm and its natural extension. As is detailed in \cite{A,AF:01,AF:05,T}, this combinatorial structure gives information about the underlying continued fraction algorithm and vice versa. This motivates our study of the $S$-adic sequences $\omega(x,N)$ and $\ompr(x,N)$.
	
	\section{Balance Properties of $N$-continued fraction sequences}\label{sec:balance}
	
	In this section, we prove that the sequences $\omega(x,N)$ and $\ompr(x,N)$ are finitely balanced. For each fixed $N\ge 1$ we provide an upper bound for the balance constant that is valid for each~$x$. We refine this result by defining some sets in terms of the size of the NCF digits for which the balance constant can be improved. After that, we provide lower bounds for the balance constant. We refer the reader to \cite{A} for some notions on balance of sequences.	
	
	\subsection{Balance results for $N$-continued fraction sequences}	
	We begin with the definition of balance.
	
	\begin{definition}[Balance]
		Given $C>0$, we say that a pair $(u,v)$ of words over the alphabet $\cA$ is {\it $C$-balanced} if $|u|=|v|$ and
		\[-C\leq |u|_a-|v|_a\leq C\quad \mbox{ for every }a\in\cA. \] 
		We say that a sequence $\nu \in \cA^\N$ is \textit{$C$-balanced} if every pair $(u,v)$ of factors of $\nu$ with $|u|=|v|$ is $C$-balanced.
		We say that $\nu$ is {\it finitely balanced} if it is $C$-balanced for some $C>0$.
	\end{definition}

	Before we prove finite balancedness for NCF sequences and their duals, we introduce the following definition of a minimal pair. 
	
	\begin{definition}[Minimal pair]
		Let $\nu\in \cA^\N$, two factors $u,v \subset \nu$ and $C>0$. We say that $(u,v)$ is a \textit{minimal pair} of not $C$-balanced factors if $|u|=|v|$, $||v|_a - |u|_a| > C$ for some $a \in \cA$, and the length of $u$ and $v$ is minimal with respect to this property among all factors of $\nu$.
	\end{definition}
	
	We have mentioned that it is easier to work with the dual sequence $\ompr(x,N)$ than with the related sequence $\omega(x,N)$. Thus our strategy is to prove results for dual NCF sequences and translate them to NCF sequences afterwards. The following lemma allows us to transfer the property of balancedness in this way.
	
	\begin{lemma}\label{lem:NCbal}
			If $\ompr(x,N)$ is $C$-balanced for some $C>0$, then $\omega(x,N)$ is $N\cdot C$-balanced.
	\end{lemma}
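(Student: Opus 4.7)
The plan is to leverage the relation $\omega(x,N) = \tau(\ompr(x,N))$ of~\eqref{omegaSomega}, where $\tau$ is the substitution $0 \mapsto 0$, $1 \mapsto 1^N$. By Lemma~\ref{lem:maxFactor}(1a), the only maximal $1$-block in $\omega(x,N)$ is $1^N$, so every factor $u$ of $\omega(x,N)$ admits a unique decomposition $u = 1^a \tau(w) 1^b$ with $0 \le a,b \le N-1$ and $w$ a factor of $\ompr(x,N)$ (with the convention $w=\varepsilon$ when $u$ is a power of $1$). This decomposition gives the identities $|u|_0 = |w|_0$ and $|u|_1 = a+b+N|w|_1$.

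Given factors $u_1,u_2$ of $\omega(x,N)$ of common length with decompositions $u_i = 1^{a_i}\tau(w_i)1^{b_i}$, the relation $|u_i|_0 + |u_i|_1 = |u_i|$ reduces the task to bounding $\bigl||u_1|_0-|u_2|_0\bigr| = \bigl||w_1|_0-|w_2|_0\bigr|$ by $NC$. Expanding the equality $|u_1|=|u_2|$ yields the length equation
\[
(|w_1|_0-|w_2|_0) + N(|w_1|_1-|w_2|_1) = (a_2+b_2)-(a_1+b_1),
\]
whose right-hand side has absolute value at most $2(N-1)$.

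If $|w_1|=|w_2|$, the $C$-balance of $\ompr(x,N)$ gives $\bigl||w_1|_0-|w_2|_0\bigr|\le C\le NC$ immediately. Otherwise, without loss of generality $|w_1|>|w_2|$, and I would extend $w_2$ on the right in $\ompr(x,N)$ to a factor $w_2^+$ of length $|w_1|$. Applying $C$-balance to the equal-length pair $(w_1,w_2^+)$ gives $\bigl||w_1|_a - |w_2|_a - E_a\bigr|\le C$ for each $a\in\{0,1\}$, where $E_a := |w_2^+|_a - |w_2|_a \ge 0$ with $E_0+E_1 = |w_1|-|w_2|$. Combining these balance inequalities with the length equation, and using the bounds $a_i,b_i\le N-1$, yields $\bigl||w_1|_0-|w_2|_0\bigr|\le NC$. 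The bound for $|u_1|_1 - |u_2|_1$ is then immediate from $|u_1|_1 - |u_2|_1 = -(|u_1|_0 - |u_2|_0)$.

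The main obstacle is the case $|w_1|\ne|w_2|$: the length difference $\Delta = |w_1|-|w_2|$ can a priori be large, so a naive combination of the balance inequalities with the length equation only produces a bound of the form $NC + O(N)$. The key observation that must be exploited is that the two balance errors are constrained by $\epsilon_0+\epsilon_1 = 0$ (because $|w_1| = |w_2^+|$), which when combined with the structural bound $a_i,b_i\le N-1$ and the length equation pins down $\Delta$ tightly enough to yield the clean factor $N$ in the bound $NC$.
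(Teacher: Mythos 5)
Your reduction to the dual word via the decomposition $u=1^a\tau(w)1^b$ is fine, but the last step does not close, and the ``key observation'' you invoke to close it carries no extra information. Write $\delta_a=|w_1|_a-|w_2|_a$, $\epsilon_a=|w_1|_a-|w_2^+|_a$, $E_a=|w_2^+|_a-|w_2|_a$, so $\delta_a=\epsilon_a+E_a$. The identity $\epsilon_0+\epsilon_1=0$ is equivalent to $E_0+E_1=\Delta=\delta_0+\delta_1$, which you already have; it also forces $\epsilon_0-E_0=-(\epsilon_1-E_1)$, so the two balance inequalities collapse to the single constraint $|\epsilon_0|\le C$. The complete list of constraints you have is therefore $|\epsilon_0|\le C$, $E_0,E_1\ge 0$, and the length equation $\delta_0+N\delta_1=R$ with $|R|\le 2(N-1)$. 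Substituting $\delta_1=-\epsilon_0+E_1$ into the length equation gives $\delta_0=R+N\epsilon_0-NE_1$, whose maximum over this region is attained at $\epsilon_0=C$, $E_1=0$, $R=2(N-1)$, namely $\delta_0=NC+2(N-1)$. So your constraints provably yield only $NC+2(N-1)$, not $NC$, and nothing in them ``pins down $\Delta$'' further; the extremal configuration is not excluded by anything you state. This is a genuine gap, not a detail.

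What is missing is an arithmetic input that your decomposition discards. The paper works with a \emph{minimal} non-$(N\cdot C)$-balanced pair $(u,v)$: then $u$ starts and ends with $0$ and $v$ starts and ends with $1$, so by Lemma~\ref{lem:maxFactor}(1a) $|u|_1\equiv 0\pmod N$ while $|v|_1\equiv j+k\pmod N$, where $1^j,1^k$ are the partial $1$-blocks at the ends of $v$; since minimality forces $|v|_1-|u|_1=NC+1\equiv 1\pmod N$, one gets $j+k=N+1$ (after disposing of $j+k\le N$ by shifting $v$ inside its maximal blocks). Shifting again so that $j=N$, $k=1$ makes $v1^{N-1}$ a factor lying in $\tau(\ompr(x,N))$, and the comparison in the dual word then gives exactly the bound $N\cdot C$, a contradiction. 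To repair your argument you would need to import precisely this: restrict to an extremal pair, use $a_1=b_1=0$ and the congruence $(a_2+b_2)\equiv 1\pmod N$ to kill the $2(N-1)$ slack in $R$, and adjust the window so that the ragged $1$'s can be absorbed into a genuine $\tau$-image before invoking $C$-balance of $\ompr(x,N)$.
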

	
	\begin{proof}
		Suppose this is not true, then  given that $\ompr(x,N)$ is $C$-balanced for some $C>0$ there exists a minimal pair $(u,v)$ of not $N\cdot C$-balanced factors of $\omega(x,N)$. It is clear that $u$ and $v$ cannot start with the same letter nor end with the same letter. Assume w.l.o.g. that $|v|_1 > |u|_1$, then $v$ must start and end with $1$ and $u$ must start and end with $0$.	Thus according to Lemma~\ref{lem:maxFactor} we can write $v=1^j 0^{d_1} V 0^{d_1} 1^k$ for some word $V$ and  $ j,k\in\{1,\ldots, N\}$. We distinguish two cases.
		
		If $j + k \leq N$, then Lemma~\ref{lem:maxFactor} implies that $\widetilde{v} = 0^{d_1+j+k-N} V 0^{d_1} 1^N$ is also in $\omega(x,N)$ and satisfies $|\widetilde{v}|_1\ge |v|_1$ and $|\widetilde{v}|=|v|$. Then because $(u,\widetilde{v})$ is not $N\cdot C$-balanced and both words start with $0$, $(u,v)$ is not a minimal pair of not $N\cdot C$-balanced factors, a contradiction. 
		
		Suppose now that $N + 1 \leq j + k \leq 2N.$  It is clear that, if $(u,v)$ is a minimal pair of not $N\cdot C$-balanced factors, then $|v|_1 - |u|_1 = N\cdot C + 1$. Because $u$ starts and ends with $0$, $|u|_1$ is a multiple of $N$, so we must have $j + k = N + 1$. Hence, we can assume w.l.o.g. that $v=1^N 0^{d_1} V 0^{d_1} 1$, because according to Lemma~\ref{lem:maxFactor} we can adjust the values of $j$ and $k$ by ``shifting" $v$. Consider the substitution $\tau$ defined in \eqref{eq:sdef}. Then by \eqref{omegaSomega} we have that $\omega(x,N) = \tau(\ompr(x,N))$. Find $\widehat{u},\widehat{v}\subset \ompr(x,N)$ such that $\tau(\widehat{u})=u$ and $\tau(\widehat{v})=v1^{N-1}$. Then $|\widehat{v}|_1>|\widehat{u}|_1$ and $|\widehat{v}|<|\widehat{u}|$. Find a word $\widehat{V}$ such that $\widehat{v}\widehat{V}$ is a factor of $\ompr(x,N)$ and $|\widehat{v}\widehat{V}|=|\widehat{u}|$. Then, because $\ompr(x,N)$ is $C$-balanced by hypothesis, $|\widehat{v}\widehat{V}|_1-|\widehat{u}|_1 \leq C$. Therefore, by the definition of $\tau$, $|\tau(\widehat{v}\widehat{V})|_1 - |\tau(\widehat{u})|_1 \leq N\cdot C. $ But $|v|_1 \leq |\tau(\widehat{v}\widehat{V})|_1$ and $\tau(\widehat{u})=u$, which implies $0<|v|_1-|u|_1\leq N\cdot C$. This is again a contradiction to the assumption that $(u,v)$ is a minimal pair of not $N\cdot C$-balanced factors of $\omega(x,N)$.
	\end{proof}

	The following theorem states the existence of balance constants for NCF sequences and their duals, which depend on $N$ and on a lower bound for the NCF digits of $x$. We mention that this generalizes \cite[Theorem 4.1]{Tu}, where the result is proven for the case where all the NCF digits are the same.

	\begin{theorem}\label{prop:Nbalalg}
		Let $N\geq1$ be fixed and set $K\geq N$ and $
		C=
		\big\lfloor \frac{K-1}{K+1-N} \big\rfloor  +1 
		$. If we set  $$W_{K,N} := \{ [0;d_1,d_2,\ldots ]_N  \in [0,1]\setminus \Q \,:\, d_n \geq K \text{ for all  } n \geq 1 \}$$ then the following assertions hold. 
		\begin{enumerate}
			\item For all $x \in W_{K,N} $ the dual NCF sequence $\ompr(x,N)$ is $ C $-balanced.
			\item For all $x \in W_{K,N} $ the NCF sequence $\omega(x,N)$ is $ N \cdot C $-balanced.
		\end{enumerate}
	\end{theorem}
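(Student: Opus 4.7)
Part (2) is immediate from Part (1) combined with Lemma \ref{lem:NCbal}: once $\ompr(x,N)$ is shown to be $C$-balanced, applying $\tau$ upgrades this to $N\cdot C$-balancedness of $\omega(x,N)=\tau(\ompr(x,N))$. So the substantive task is Part (1), which I propose to attack by contradiction. Assume $\ompr(x,N)$ is not $C$-balanced for some $x=[0;d_1,d_2,\ldots]_N\in W_{K,N}$, and fix a minimal pair $(u,v)$ of not $C$-balanced factors of $\ompr(x,N)$ with $|v|_1-|u|_1\ge C+1$ (the symmetric case is analogous). As in the proof of Lemma \ref{lem:NCbal}, minimality forces $u$ and $v$ to disagree on both their first and last letters. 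Since Lemma \ref{lem:maxFactor}(2a) says that the only maximal $1$-block of $\ompr(x,N)$ is the single letter $1$ and $v$ is the ``$1$-heavy'' factor, $v$ must begin and end with $1$ while $u$ begins and ends with $0$. In particular $v=1\,0^{c_1}\,1\,0^{c_2}\cdots 0^{c_{r-1}}\,1$ with $c_i\in\{d_1,d_1+N\}$, and $u$ admits an analogous block decomposition flanked by partial $0$-blocks of lengths strictly less than $d_1+N$.

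The next step is to desubstitute via $\sighat_1$. Using $\ompr(x,N)=\sighat_1(\ompr(T_N(x),N))$ together with $\sighat_1(0)=0^{d_1}1$ and $\sighat_1(1)=0^N$, I would lift $u,v$ to minimal factors $u',v'$ of $\ompr(T_N(x),N)$ such that $u$ occurs inside $\sighat_1(u')$ and $v$ inside $\sighat_1(v')$. The lift is bookkeeping-heavy but clean: each $1$ of $\ompr(x,N)$ is the final letter of a $\sighat_1(0)$, and each maximal $0$-block of length $d_1+N$ arises from the juxtaposition of a $\sighat_1(1)$ with the prefix $0^{d_1}$ of the next $\sighat_1(0)$. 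Hence, up to a boundary error of constant size coming from the partial blocks at the ends of $u$ and $v$, one has $|v|_1-|u|_1=|v'|_0-|u'|_0$ and a similar identity linking $|u'|_1,|v'|_1$ to the number of ``long'' $0$-blocks in $u,v$. Because $T_N(x)\in W_{K,N}$ as well, this step may be iterated.

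The iteration produces a sequence of factor pairs $(u^{(n)},v^{(n)})$ of $\ompr(T_N^n(x),N)$, whose lengths shrink geometrically (by a factor close to $K$, since each letter of level $n+1$ corresponds to at least $K$ letters of level $n$), while the imbalance in $0$- or $1$-count persists in a controlled fashion. The plan is then to show that after finitely many levels the pair $(u^{(n)},v^{(n)})$ is forced either to have common first/last letters—contradicting minimality via the descent—or to be too short to sustain the inherited imbalance, yielding the contradiction that closes Part (1).

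The principal obstacle will be the precise bookkeeping that makes the exact constant $C=\lfloor(K-1)/(K+1-N)\rfloor+1$ appear, rather than some coarser bound. This is a sharp arithmetic statement about how much imbalance can survive one step of desubstitution under the constraint $d_n\ge K$: at each level one must verify that an imbalance of at least $C+1$ at level $n$ produces an imbalance of at least $1$ at level $n+1$, with the worst case realized precisely when all digits equal $K$. Carefully extracting that ratio $(K-1)/(K+1-N)$ from the block-conversion formulas of the preceding paragraph, and handling the additive contribution of the two boundary pieces of $u$ and $v$, is where the technical heart of the proof lies.
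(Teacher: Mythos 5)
Your overall strategy --- minimal counterexample, forcing $v$ to start and end with $1$ and $u$ with $0$, then desubstituting through $\sighat_1$ --- is the same skeleton as the paper's proof, and your reduction of (2) to (1) via Lemma~\ref{lem:NCbal} is exactly right. But there is a genuine gap, and you have located it yourself: the entire content of the theorem is the specific value $C=\lfloor (K-1)/(K+1-N)\rfloor+1$, and your proposal defers precisely the computation that produces it. There are two concrete points where your plan as stated would not go through. First, your termination mechanism is off. You propose iterating the desubstitution until the words are ``too short to sustain the inherited imbalance,'' but the imbalance is \emph{not} guaranteed to survive each level at strength $\ge C+1$: the paper shows that if $a:=|u|_{10^{d+N}}-|v|_{10^{d+N}}$ satisfies $a\ge C$, then the desubstituted (and suitably truncated) pair is again not $C$-balanced and strictly shorter --- which already contradicts minimality after a \emph{single} step, no iteration needed, because minimality is taken over all $x\in W_{K,N}$ simultaneously and $T_N(x)\in W_{K,N}$. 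The case that actually carries the theorem is $a=C-1$, where the imbalance does \emph{not} persist and no shorter bad pair is produced; there the contradiction must come from arithmetic, not from descent.

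Second, that arithmetic is an exact identity, not a ``boundary error of constant size.'' After normalizing $u=0^s1\cdots10^{d+N}$ with $d+1\le s\le d+N$ and $v=10^d\cdots0^d1$, counting the two kinds of maximal $0$-blocks (of lengths $d$ and $d+N$, by Lemma~\ref{lem:maxFactor}) and using $|u|=|v|$ and $|v|_1-|u|_1=C+1$ yields $a=\frac{(d+1)C+1-s}{N}$ exactly. Setting $a=C-1$ and using $s\le d+N$ and $d\ge K$ gives $C=\frac{s-N-1}{d+1-N}\le\frac{K-1}{K+1-N}$, contradicting the choice of $C$. Without this identity (and the accompanying three-way case analysis $a\ge C+1$, $a=C$, $a=C-1$) the constant cannot be extracted; as it stands your proposal is a correct plan for the easy parts and an accurate description of where the difficulty lies, but not a proof.
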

	
	\begin{proof}
		We start with the proof of (1). Because the result is well-known for $N=1$ we may assume that $N\ge 2$. Suppose this assertion is not true for some fixed $K$ and $N$ with $K\ge N\ge 2$. Then there exists $x \in W_{K,N}$, such that the sequence $\ompr = \ompr(x,N)$ admits a minimal pair $(u,v)$ of not $C$-balanced factors. We assume that $x$ is chosen in a way that $(u,v)$ has minimal length among all not $C$-balanced pairs of factors of $\ompr(y,N)$ with $y \in W_{K,N}$. We will reach a contradiction by finding a not $C$-balanced pair of shorter length.
		
		Let $d=d_1$. By minimality of $(u,v)$ we have $||v|_1 - |u|_1| = C + 1$. Also, $u$ and $v$ cannot start with the same letter nor end with the same letter. Assume w.l.o.g. that $|v|_1 > |u|_1$, then $v$ must start and end with $1$.	Let $0^s1$ be a prefix of $u$ (it is easy to see that $u$ has to contain an occurrence of $1$ because otherwise $||v|_1 - |u|_1| \le 2 < C+1$). Then $s \geq d+1$, otherwise we could always remove the prefix $0^s1$ from $u$ and the suffix $0^s1$ from $v$ (note that for $s\le d$ the word $0^s1$ has to be a suffix of $v$ by Lemma~\ref{lem:maxFactor}) and find a shorter pair of not $C$-balanced words. Analogously, if $10^t$ is a suffix of $u$ then $t \geq d+1$. Moreover, $s + t \geq 2d + N +1$, otherwise we could ``shift" $u$ and find a word $\widetilde{u}$ with the prefix $0^s1$ with $s \leq d$ such that $(\widetilde{u},v)$ is a minimal pair of not $C$-balanced words, which contradicts what we just proved.
		
		Summing up, we may assume w.l.o.g. $u=0^s1\cdots 10^{d+N}$, $d+1 \leq s \le d+N$, and  $v=10^d\cdots 0^d1$.  For a factor $w$ of $\omega$ denote by $|w|_{0^d*}$  the number of occurrences of a maximal $0$ block of length $0^d$ in $w$, that is, $|w|_{0^d*}:=|1w1|_{10^d1}$. Lemma~\ref{lem:maxFactor} implies that\footnote{Note that we need to use $|\cdot|_{10^{d+N}}$ because otherwise the prefix $0^s$ in $u$ is counted twice if $s=d+N$.}
		\[ |v|_0 = (d+N) \, |v|_{10^{d+N}} + d \,  |v|_{0^d*}
		\]
		and 
		\[ |u|_0 = s + (d+N) \, |u|_{10^{d+N}} + d \, |u|_{0^d*}.\]
		Then we have, 	
		\begin{equation}\label{eq:split}
			\begin{split}
				|u|_0 - |v|_0 &= s + (|u|_{10^{d+N}} - |v|_{10^{d+N}})(d+N) +( |u|_{0^d*} -|v|_{0^d*} )\,d\\
				&= s + (|u|_{10^{d+N}} - |v|_{10^{d+N}})\,N +((|u|_{10^{d+N}} + |u|_{0^d*}) -(|v|_{10^{d+N}} + |v|_{0^d*}) )\,d.
			\end{split}
		\end{equation}

		Every maximal $0$ block of $\ompr$ lies between $1$'s. Since $v$ starts and ends with $1$, the number of maximal $0$ blocks of $v$ is $|v|_1 - 1$, and since $u$ starts and ends with $0$, the number of $0$ blocks in $u$ is $|u|_1 + 1$, of which exactly one of them (the prefix $0^s$ of $u$) is not counted by the terms $|u|_{10^{d+N}}$ and $|u|_{0^d*}$. Now, because $|v|_1 - |u|_1 = C + 1$, this yields
		\begin{equation}\label{eq:reverteduv-1}
			(|v|_{10^{d+N}} + |v|_{0^d*}) - (|u|_{10^{d+N}} + |u|_{0^d*}) = C.
		\end{equation}
		Let \[ a:= |u|_{10^{d+N}} - |v|_{10^{d+N}}.  \] Then from \eqref{eq:split} and \eqref{eq:reverteduv-1} we obtain
		\begin{equation}\label{eq1}
			|u|_0 - |v|_0 = s + a \cdot N - d \cdot C.
		\end{equation}
		Also, $|u|=|v|$ and hence \begin{equation}\label{eq2}
			|u|_0 - |v|_0 = |v|_1 - |u|_1 = C + 1,
		\end{equation} so combining \eqref{eq1} and \eqref{eq2} yields
		\begin{equation}\label{eq3}
			a = \frac{(d+1) \, C + 1 - s}{N}.
		\end{equation}	
		Since $\ompr$ is an $S$-adic sequence, by Definition~\ref{def:sadic} we have $\ompr = \sighat_1(\ompr^{(2)})$ where $\ompr^{(2)}=\ompr(T_N(x) , N)$ with $T_N(x) = [0;d_2,d_3,\ldots]_N \in W_{K,N}$. As a consequence of the shape of $\sighat_1$ there exist words $u^{(2)},v^{(2)} \subset \ompr^{(2)}$ such that
		\[ \sighat_1(u^{(2)}) = 0^{d + N - s} u 1, \qquad \sighat_1(v^{(2)}) =  0^d v.	\]
		We claim that
		\begin{equation}\label{eq:u1v1pos}
			|v^{(2)}| \le |u^{(2)}|.
		\end{equation}
		To prove this, suppose on the contrary that  $|v^{(2)}| - |u^{(2)}| \geq 1$. Note that the letter $1$ appears in $\sighat_1(u^{(2)})$ (resp. $\sighat_1(v^{(2)})$) whenever a $0$ appears in $u^{(2)}$ (resp. $v^{(2)}$). Hence, \eqref{eq2} yields 
		\begin{equation}\label{eq4}
			|v^{(2)}|_0 - |u^{(2)}|_0 = |0^dv|_1 - |0^{d+N-s}u1|_1 = C.
		\end{equation} 
		Hence, $$|v^{(2)}|_1 -  |u^{(2)}|_1 = |v^{(2)}| - |u^{(2)}| + |u^{(2)}|_0 - |v^{(2)}|_0 \geq 1 - C.$$ By the definition of $\sighat_1$ and using that $C \geq 1$ this implies that 
		\begin{equation}
			\begin{split}
				|\sighat_1(v^{(2)})| - |\sighat_1(u^{(2)})| & = (d+1)(|v^{(2)}|_0 - |u^{(2)}|_0) + N (|v^{(2)}|_1 -  |u^{(2)}|_1 ) \\
					&   \ge  (d+1) \, C + (1-C) \, N = C \, (d+1-N) + N \ge d+1. 
			\end{split}
		\end{equation}
		This is impossible because $s \leq d + N$ and so
		\[
		|\sighat_1(v^{(2)})| - |\sighat_1(u^{(2)})| = |0^dv| -  |0^{N+d-s}u1|  =  s - N - 1 \le d-1.
		\]		
		This proves the claim.

		Note that the block $0^{d+N}$ appears in $\sighat_1(u^{(2)})$ (resp. $\sighat_1(v^{(2)})$), whenever a $1$ appears in $u^{(2)}$ (resp. $v^{(2)}$) because each $1$ is followed by $0$ in $u^{(2)}$ (resp. $v^{(2)}$). From the definition of $a$ we obtain 
		\begin{equation}\label{eq5}
			|u^{(2)}|_1 - |v^{(2)}|_1  =  |0^{d+N-s}u1|_{0^{d+N}} - |0^dv|_{0^{d+N}}  = (|u|_{10^{d+N}}+1) - |v|_{10^{d+N}}=a+1
		\end{equation}
		(the ``$+1$'' comes from the fact that the prefix $0^{d+N}$ of $0^{d+N-s}u1$ is not counted in $|u|_{10^{d+N}}$).
		
		Combining \eqref{eq:u1v1pos}, \eqref{eq4}, and \eqref{eq5} yields
		\begin{equation}\label{eq:qq}
			0\le |u^{(2)}| - |v^{(2)}| = a+1-C.
		\end{equation}
		Let $\widetilde{u}^{(2)}$ be the prefix of $u^{(2)}$ such that $|\widetilde{u}^{(2)}|=|v^{(2)}|$, {\it i.e.}, remove the last $a+1-C$ letters of $u^{(2)}$, of which at most $\left\lceil\frac{a+1-C}{d_2+1}\right\rceil$ are $1$'s, to obtain $\widetilde{u}^{(2)}$.  Suppose that $a \ge C+1$. Then from \eqref{eq5} we get 
		\[
		|\widetilde{u}^{(2)}|_1 - |v^{(2)}|_1 \geq a + 1 - \left\lceil \frac{a+1-C}{d_2+1} \right\rceil
		\geq C+1.
		\]
		This means that $(\widetilde{u}^{(2)},v^{(2)})$ is a not $C$-balanced pair of factors of $\ompr^{(2)}=\ompr(T_N(x),N)$ with $T_N(x) \in W_{K,N}$. But $(\widetilde{u}^{(2)},v^{(2)})$ has shorter length than $(u,v)$, contradicting the minimality of $(u,v)$. Together with \eqref{eq:qq} this yields $a\in \{C-1,C\}$. But if  $a=C$, we need to remove only the last letter  from $u^{(2)}$ (which is $0$) to obtain $\widetilde{u}^{(2)}$. Thus in this case we get from \eqref{eq5} that
		\[
		|\widetilde{u}^{(2)}|_1 - |v^{(2)}|_1 \geq a + 1 = C + 1
		\] 
		which is a contradiction again. Thus $a=C-1$ and we see from \eqref{eq3} that in this case we have $C-1 = \frac{(d+1)\,C + 1 - s}{N}$. Because $s\le d+N$ this implies that 
		\[
		\begin{split}
			C=\frac{s-N-1}{d+1-N}  \le \frac{d-1}{d+1-N} \le \frac{K-1}{K+1-N}.
		\end{split}
		\]
		Thus a minimal pair $(u,v)$ which is not $C$-balanced can only exist for $C \le \big\lfloor \frac{K-1}{K+1-N} \big\rfloor$. This contradicts our choice of $C$ and the theorem is proved.
		
		Statement (2) is straightforward from (1) and Lemma \ref{lem:NCbal}.
	\end{proof}	
	
	The previous theorem has an immediate corollary that gives balance constants that depend only on $N$.	
	
	\begin{corollary}\label{cor:N2balgen}\mbox{}
		\begin{enumerate}
			\item For all $x\in [0,1]\setminus \Q $ we have that	$\ompr(x,N)$ is $N$-balanced. Furthermore, for every $N\geq 2$ there are uncountable many $x\in [0,1]\setminus \Q $  such that $\ompr(x,N)$ is $2$-balanced.
			
			\item For all $x\in[0,1]\setminus \mathbb{Q}$ we have that	$\omega(x,N)$ is $N^2$-balanced. Furthermore, for every $N\geq 2$ there are uncountable many  $x\in[0,1]\setminus \mathbb{Q}$ such that $\omega(x,N)$ is $2N$-balanced.
		\end{enumerate}
	\end{corollary}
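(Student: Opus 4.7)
The plan is to deduce both assertions as immediate consequences of Theorem~\ref{prop:Nbalalg} by making careful choices of the parameter $K$, and then to transfer the first statement to the second via Lemma~\ref{lem:NCbal}. No new combinatorial argument is needed; the work is packaged in the pair $\ompr$-balance $\Rightarrow$ $\omega$-balance mechanism already established.

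For the universal bound in~(1), I plan to apply Theorem~\ref{prop:Nbalalg}(1) with the smallest admissible $K$, namely $K=N$. Since every greedy NCF expansion has $d_n\ge N$, one has $W_{N,N}=[0,1]\setminus\Q$. With this choice the constant becomes
\[
C=\left\lfloor \frac{N-1}{N+1-N} \right\rfloor+1 = (N-1)+1 = N,
\]
so Theorem~\ref{prop:Nbalalg}(1) immediately yields that $\ompr(x,N)$ is $N$-balanced for every irrational $x\in[0,1]$.

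For the second part of~(1), the strategy is to pick $K$ large enough so that the expression in Theorem~\ref{prop:Nbalalg} collapses to $C=2$. I will check that $\lfloor (K-1)/(K+1-N)\rfloor=1$ precisely when $K+1-N\le K-1<2(K+1-N)$, i.e., when $K\ge 2N-2$ (and $N\ge 2$ to make the first inequality non-trivial). Fixing any such $K$, say $K=\max(N,2N-2)$, Theorem~\ref{prop:Nbalalg}(1) gives that $\ompr(x,N)$ is $2$-balanced for every $x\in W_{K,N}$. Uncountability of $W_{K,N}$ follows from the bijection between the set of irrationals with all digits $\ge N$ and the sequence space $\{(d_n)_{n\ge1}:d_n\ge N\}$ noted in the preliminaries, which also gives a bijection between $W_{K,N}$ and the uncountable space $\{(d_n)_{n\ge1}:d_n\ge K\}$.

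Statement~(2) is obtained by feeding both conclusions of~(1) into Lemma~\ref{lem:NCbal}: $C$-balancedness of $\ompr(x,N)$ upgrades to $N\cdot C$-balancedness of $\omega(x,N)$. Applying this with $C=N$ for every irrational $x$ yields the $N^2$-balanced bound, and applying it with $C=2$ on the uncountable set $W_{K,N}$ yields $\omega(x,N)$ being $2N$-balanced on that set. No step is a real obstacle; the only mild care needed is the arithmetic verification that $K\ge 2N-2$ is exactly the threshold forcing $C=2$, and observing that $W_{N,N}$ coincides with $[0,1]\setminus\Q$.
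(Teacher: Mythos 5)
Your proposal is correct and follows essentially the same route as the paper: specialize Theorem~\ref{prop:Nbalalg} to $K=N$ (noting $W_{N,N}=[0,1]\setminus\Q$) for the universal bound, to a larger $K$ forcing $C=2$ for the second claim, and pass to $\omega(x,N)$ via Lemma~\ref{lem:NCbal}. The only cosmetic difference is that you identify the exact threshold $K\ge 2N-2$ while the paper simply uses $W_{2N,N}$; both choices yield $C=2$ on an uncountable set.
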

	
	\begin{proof}
		In (1), the first statement is immediate, since the greedy NCF expansion satisfies, for every $x\in [0,1]\setminus \Q $, that the digits are larger or equal to $N$, hence $x\in W_{N,N}$. The second statement follows from the fact that, for $x\in W_{2N,N}$, the dual $S$-adic sequence is $2$-balanced. Furthermore, it is not hard to check that $ W_{2N,N}$ is uncountable.
		
		Assertion (2) immediately follows from (1) and Lemma \ref{lem:NCbal}.		
	\end{proof}
	
	\subsection{Imbalance results}	
	We have given upper bounds for $C$ such that the $S$-adic sequences are $C$-balanced. Next we will give lower bounds. Note that in view of Corollary \ref{cor:N2balgen} this bound is optimal for $N=2$.
	\begin{proposition}
		Let $N\geq 2$ and $x\in[0,1]\backslash \Q$. Then the following assertions hold.
		\begin{enumerate}
			\item $\ompr(x,N)$ is not $1$-balanced.
			\item $\omega(x,N)$ is not $(2N-1)$-balanced. 
		\end{enumerate}
	\end{proposition}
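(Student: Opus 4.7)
The plan for both parts is to exhibit two factors of the given sequence that have the same length but whose numbers of $1$'s differ by more than the claimed balance constant.

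For (1), I take $u=0^{d_1+N}$ and $v=10^{d_1}10^{N-2}$; both have length $d_1+N$ since $N\ge 2$. Lemma~\ref{lem:maxFactor}~(2b) shows that $u$ is a factor of $\ompr(x,N)$, and Lemma~\ref{lem:maxFactor}~(2) shows that $10^{d_1}1$ is a factor as well (the maximal $1$-blocks being single letters, and $0^{d_1}$ being a maximal $0$-block). Because the next maximal $0$-block after the second $1$ in $10^{d_1}1$ has length at least $d_1\ge N>N-2$, one may extend by the suffix $0^{N-2}$ to conclude that $v$ is also a factor of $\ompr(x,N)$. The counts $|u|_1=0$ and $|v|_1=2$ force $\ompr(x,N)$ to fail $1$-balancedness.

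For (2), the witness factors will be
\[
u=0^{d_1+N}(1^N 0^{d_1})^{d_2-1}\,1^N\,0^{d_1+N}, \qquad v=(1^N 0^{d_1})^{d_2+1}\,1^N.
\]
A short count gives $|u|=|v|=(d_2+1)(d_1+N)+N$, $|u|_1=d_2 N$ and $|v|_1=(d_2+2)N$, so $|v|_1-|u|_1=2N>2N-1$. To confirm that both are factors of $\omega(x,N)$, I will use $\omega(x,N)=\sigma_1(\omega^{(2)})$ with $\omega^{(2)}=\omega(T_N(x),N)$. A direct evaluation of $\sigma_1$ yields
\[
\sigma_1(0\,1^N\,0^{d_2}\,1^N\,0)=0^{d_1}1^N\cdot u\cdot 1^N,
\]
hence $u$ is a factor of $\omega(x,N)$ as soon as $0\,1^N\,0^{d_2}\,1^N\,0$ occurs in $\omega^{(2)}$. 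This follows from Lemma~\ref{lem:maxFactor}~(1) applied to $\omega^{(2)}$: the block $0^{d_2}$ is a maximal $0$-block of $\omega^{(2)}$, and as such it is flanked by the maximal $1$-blocks $1^N$, each of which is in turn bordered by $0$'s. Similarly, $v$ is a suffix of $\sigma_1(0^{d_2+2})=(0^{d_1}1^N)^{d_2+2}$, and since $N\ge 2$, the inequality $d_2+2\le d_2+N$ together with Lemma~\ref{lem:maxFactor}~(1b), which identifies $0^{d_2+N}$ as a maximal $0$-block of $\omega^{(2)}$, implies that $0^{d_2+2}$ is a factor of $\omega^{(2)}$, so $v$ is a factor of $\omega(x,N)$.

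The main obstacle is choosing the witnesses so that their lengths match exactly while the discrepancy in $|\cdot|_1$ reaches the required magnitude. The key geometric insight for (2) is that the minimal gap in $\omega(x,N)$ between two occurrences of the long maximal block $0^{d_1+N}$ consists of exactly $d_2-1$ copies of the short-gap pattern $1^N 0^{d_1}$ separated by $1^N$ blocks, because this gap is produced by a single maximal $0$-block of $\omega^{(2)}$ of length $d_2$; this is precisely what makes the length of $u$ coincide with that of $v=(1^N 0^{d_1})^{d_2+1}1^N$ and yields a deficit of $2N$ ones, just barely beyond $2N-1$.
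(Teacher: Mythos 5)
Your proof is correct and follows essentially the same strategy as the paper: exhibit an explicit pair of equal-length factors whose numbers of $1$'s differ by $2$ (resp.\ $2N$), obtained by pushing short factors of a desubstituted sequence through the substitutions. In fact your witnesses for part (2) are, after rewriting, exactly the paper's words $0^N(0^{d_1}1^N)^{d_2}0^{N+d_1}$ and $1^N(0^{d_1}1^N)^{d_2}0^{d_1}1^N$ (the paper justifies their occurrence by applying $\sigma_1\circ\sigma_2$ to the factors $001$ and $110$ of $\omega^{(3)}$ rather than your one-step desubstitution), and for part (1) you use the pair $(0^{d_1+N},10^{d_1}10^{N-2})$ where the paper uses the slightly shorter $(0^{d_1+2},10^{d_1}1)$.
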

	
	\begin{proof}
		Let $x=[0;d_1,d_2,\ldots]_N$.

        (1) Recall that $\ompr(x,N) = \sighat_1(\ompr^{(2)})$, where $\ompr^{(2)} = \ompr(T_N(x),N).$ It is easy to check that $10$ is a factor of $\ompr^{(2)}$, hence $\sighat_1(10) = 0^N0^{d_1}1$ is a factor of $\ompr(x,N)$. Since $N \geq 2,$ this yields the factor $u = 0^{d_1 + 2}$ of $\ompr(x,N)$.
        
        Analogously, $00$ is a factor of $\ompr^{(2)}$, hence $\sighat_1(00) = 0^{d_1}10^{d_1}1$ is a factor of $\ompr(x,N)$. This yields the factor $v = 10^{d_1}1$ of $\ompr(x,N)$. Because $u$ and $v$ are of the same length and  $|v|_1 - |u|_1 = 2$, assertion (1) is proved.

        (2) Recall that $\omega(x,N) = \sigma_1(\omega^{(2)}) = \sigma_1 \circ \sigma_2(\omega^{(3)})$ where $\omega^{(2)} = \omega(T_N(x),N)$ and $\omega^{(3)} = \omega(T_N^2(x),N).$ It is not hard to check that $001$ is a factor of $\omega^{(3)}$. We have 
        \begin{equation*}
            \begin{split}
                \sigma_1 \circ \sigma_2(001) &= \sigma_1 (0^{d_2} 1^N 0^{d_2} 1^N 0)\\
                    & = (0^{d_1}1^N)^{d_2} 0^N (0^{d_1}1^N)^{d_2} 0^N 0^{d_1} 1^N,
            \end{split}
        \end{equation*}
    which gives us the factor $u=0^N (0^{d_1}1^N)^{d_2} 0^{N+d_1}$ of $\omega(x,N)$. 
    
    On the other hand, since $N \geq 2$ it is not hard to check that $110$ is a factor of $\omega^{(3)}$. We have 
        \begin{equation*}
            \begin{split}
                \sigma_1 \circ \sigma_2 (110) &= \sigma_1(0^{d_2+2} 1^N)\\
                    & = 0^{d_1} 1^N (0^{d_1} 1^N)^{d_2} 0^{d_1} 1^N 0^N,
            \end{split}
        \end{equation*} 
    which gives us the factor $v = 1^N  (0^{d_1} 1^N)^{d_2} 0^{d_1} 1^N $ of $\omega(x,N)$. Since $|u| = |v|$ and $|v|_1 - |u|_1 = 2N$, assertion (2) follows.
    \end{proof}		
	\begin{remark}
		Suppose that, given $x \in [0,1],$ we consider infinite NCF expansions that are different from the greedy expansion, that is, we allow $d_n < N$. Note that such sequences also exist for rational values of $x$. For each expansion $(d_n)$ of $x$ we can then define the $S$-adic sequence $\widetilde{\omega}(x,(d_n),N)$ of the substitutions $(\sigma_n )$ associated with $d_n$ for all $n\geq 1$. Then we have the following.
		\begin{enumerate}			
			\item The property of $N^2$-balancedness does not hold in general for $\widetilde{\omega}(x,(d_n),N)$. For instance, let $N=2$ and $x =[0;1,1,1,\ldots]_2$, {\it i.e.}, the sequence of digits is given by $(d_n)=(1)$.  Then 
			$u= (0^31^2)^3(01^2)^20^31^20^3$ and 
			$v=(1^20)^30^2(1^20)^30^2(1^20)^21$
			are both factors of $\widetilde{\omega}(x,(d_n),N)$ of the same length and $|v|_1-|u|_1=5=N^2+1$.
			
			\item Suppose $(d_n)=(d)$ for some $d<N$. Then the associated substitution $\sigma$ is not Pisot, and hence the sequence $\widetilde{\omega}(x,N)$ is imbalanced, that is, it is not $C$-balanced for any $C>0.$ This is a consequence of \cite[Theorem 13]{A}.
			
			\item  $S$-adic sequences corresponding to eventually greedy NCF expansions are finitely balanced. Suppose $d_n\geq N$ for all $n \geq K$ for some $K\geq1$. Let $\sigma=\sigma_1\circ \dots \circ \sigma_{K-1}$ and let ${\tilde x} = [0;d_K,d_{K+1},\ldots]_N.$ Then $\widetilde{\omega}(x,(d_n),N)=\sigma(\omega({\tilde x},N))$ and $\omega({\tilde x},N)$ is $N^2$-balanced because it is an NCF sequence. It is not hard to check that, for any given substitution $\sigma$, if a sequence $\nu$ is $C$-balanced then $\sigma(\nu)$ is $C'$-balanced for some $C'$. Hence the statement follows.
		\end{enumerate}
	\end{remark}
	
	\section{Factor complexity of $N$-continued fraction sequences and unique ergodicity}\label{sec:complexity}
	
	\subsection{Definitions and preliminaries}
	Another property worth studying when working with sequences is their {\em factor complexity function}. This function counts how many different factors of a given length $n\in\N$ appear in a sequence.
\begin{definition}[Factor complexity function]
		Given a sequence $\nu\in \cA^\N$ over the finite alphabet $\cA$ and $n \in \N$, set
		\[\cL_{n}(\nu):=\{u\in\cA^*\,:\,|u|=n,\,u\mbox{ is a factor of }\nu \}.
		\]
		Define the \textit{factor complexity function} $p_{\nu} : \N \rightarrow \N$ as $p_{\nu}(n) = |\cL_{n}(\nu)|$.
\end{definition}
	
	The factor complexity function has the trivial upper bound $p_{\nu}(n) \leq |\cA|^n$, and $\nu$ is periodic if and only if there exists $n \in \N$ such that $p_{\nu}(n) \leq n$ (for the nontrivial direction see \cite[Corollary~4.3.2]{CN}). As mentioned in the introduction, a sequence $\nu$ is said to be Sturmian if it is not eventually periodic and its factor complexity function is given by $p_\nu(n) = n + 1$. This means that Sturmian sequences are the aperiodic sequences with the smallest possible complexity. We will show that NCF sequences and their duals have low complexity as well and we will explicitly compute a formula for their factor complexity function.
	We follow \cite{CN} and \cite{FMP}.
	
	A useful way to study the complexity function is to figure out how to obtain $\cL_{n+1}(\nu)$ from $\cL_{n}(\nu)$, and for that we make use of {\it left special factors}. Moreover, in what follows we will study three particular types of left special factors: infinite and maximal left special factors, and total bispecial factors. We introduce the corresponding definitions.
	
	\begin{definition}	
		Consider a sequence $\nu \in \cA^\N$ over a finite alphabet $\cA$. 
		\begin{enumerate}
			\item Given a factor $u$ of $\nu$, we say that a letter $a \in \cA$ is a \textit{left extension} of $u$ if $au$ is a factor of $\nu$. 
			
			\item We say that a factor $u$ of $\nu$ is  a \textit{left special factor} of $\nu$ if there are two distinct letters $a,b\in\cA$ that are left extensions of $u$. We set $$LS_n(\nu) :=\{u\in\cA^*\,:\,|u|=n,\,u\mbox{ is a left special factor of }\nu  \}.$$
		
			\item 	An infinite word $u \in \cA^\N$ is called an \textit{infinite left special factor} of $\nu$ if every prefix of $u$ is a left special factor of $\nu$. 
		
			\item A left special factor $u$ of $\nu$ is called a \textit{maximal left special factor} if $ua$ is not a left special factor for any $a \in \cA$.
		
			\item A left special factor $u$ of $\nu$ is called a \textit{total bispecial factor} if there exist $a,b\in\cA $ with $a \neq b$ such that $ua$ and $ub$ are both left special factors of $\nu$.
		\end{enumerate}
	\end{definition}
	In analogy with left extension, left special factor, infinite left special factor and maximal left special factor we can define right extension, right special factor, etc.
	
	\begin{definition}
	    A sequence $\nu \in \cA^\N$ over a finite alphabet $\cA$ is said to be \textit{recurrent} if every factor of $\nu$ occurs infinitely often.
	\end{definition}
	
	It is easy to see that our NCF sequences are recurrent by looking at \eqref{recurrencesigma} and \eqref{recurrencesighat}. 
	
	If we describe the occurrences of special factors in $\cL_{n}(\nu)$, we can determine $\cL_{n+1}(\nu)$. The following result is a direct consequence of \cite[Proposition 2.1]{FMP}.
	
	\begin{lemma}
		Let $\nu$ be a recurrent sequence over a two letter alphabet and let $p_\nu$ be its factor complexity function. Then, for every $n\in\N$, 		
		\begin{equation}\label{eq.mn}
			p_{\nu}(n+1) - p_{\nu}(n) = |LS_n(\nu)|.
		\end{equation} 
	\end{lemma}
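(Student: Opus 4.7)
The plan is to count the factors of length $n+1$ by grouping them according to their suffix of length $n$. Every factor $w \in \cL_{n+1}(\nu)$ is of the form $w = au$ with $a\in\cA$ and $u \in \cL_n(\nu)$, so I will organize the count as
\[
p_\nu(n+1) = \sum_{u\in\cL_n(\nu)} \#\{a\in\cA : au \in \cL_{n+1}(\nu)\},
\]
that is, as the sum over factors $u$ of length $n$ of the number of left extensions of $u$.

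Next I would show that every $u \in \cL_n(\nu)$ admits at least one left extension. Here is where recurrence enters: since $\nu$ is recurrent, each factor $u$ appears infinitely often in $\nu$, so in particular $u$ occurs at some position strictly greater than $0$, and hence is preceded by some letter $a\in\cA$, giving $au\in \cL_{n+1}(\nu)$. Thus each summand above is at least $1$.

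Now I would use the assumption that $|\cA|=2$: for each $u\in\cL_n(\nu)$, the number of its left extensions is either $1$ or $2$, and by definition it equals $2$ precisely when $u\in LS_n(\nu)$. Therefore
\[
p_\nu(n+1) = \bigl(p_\nu(n) - |LS_n(\nu)|\bigr)\cdot 1 + |LS_n(\nu)|\cdot 2 = p_\nu(n) + |LS_n(\nu)|,
\]
which rearranges to the desired identity \eqref{eq.mn}.

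There is no real obstacle in this argument; the only subtle point is the use of recurrence to ensure each factor has at least one left extension (without this, the formula only yields an inequality). Since NCF sequences and their duals are recurrent, as already noted after the definition, the lemma will apply to them in the sequel.
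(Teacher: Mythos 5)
Your argument is correct: the decomposition of $\cL_{n+1}(\nu)$ by length-$n$ suffixes, the use of recurrence to guarantee at least one left extension of every factor, and the binary-alphabet dichotomy (one extension versus two, the latter exactly for left special factors) together give precisely the identity \eqref{eq.mn}. The paper does not prove this lemma itself but derives it as a direct consequence of a cited proposition, and your self-contained proof is exactly the standard counting argument underlying that citation, with the role of recurrence correctly identified as the point where equality (rather than a mere inequality) is obtained.
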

	
	For example, a Sturmian sequence $\nu$ has exactly one left special factor of length $n$ for every $n$, which implies $p_{\nu}(n+1) - p_{\nu}(n) = 1$, and because $p_\nu(1)=2$ this implies that  $p_{\nu}(n) = n + 1$.
	
\subsection{Characterization of left special factors}
	
	In order to obtain the factor complexity function for dual NCF sequences, we will give a characterization of their left special factors. Fix $x \in [0,1] \setminus \Q$ and $N \geq 1$ and consider the dual NCF sequence $\ompr=\ompr(x,N).$ Recall that this is an $S$-adic sequence for the substitutions $\widehat{\bsigma}_x = (\sighat_n)_{n\geq 1},$ which means there exist infinite words $\ompr^{(1)}, \ompr^{(2)},\dots$ such that $\ompr = \ompr^{(1)}$ and $\ompr^{(n)} = \sighat_n(\ompr^{(n+1)})$ for every $n \geq 1$ (see Definition \ref{def:sadic}). This ``desubstitution" process is crucial for this section because we will show that many properties of special factors are invariant under the substitutions $\sighat_n$.
	
	\begin{lemma}\label{lsfdesubst} The following assertions hold for every $n \geq 1$.
		\begin{enumerate}
			\item Let $v$ be a left special factor of $\ompr^{(n+1)}$. Then $\sighat_n(v)$ is a left special factor of $\ompr^{(n)}$.
			
			\item Let $v$ be a left special factor of $\ompr^{(n)}$ ending in the letter $1$. Then there exists a left special factor $u$ of $\ompr^{(n+1)}$ such that $\sighat_n(u)=v$.
		\end{enumerate}
	\end{lemma}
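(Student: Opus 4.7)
For assertion (1), the strategy is a direct substitution argument. Assume $v$ is left special in $\ompr^{(n+1)}$, so that both $0v$ and $1v$ occur as factors. I would then apply $\sighat_n$ to each, obtaining $\sighat_n(0v) = 0^{d_n}1\,\sighat_n(v)$ and $\sighat_n(1v) = 0^N \sighat_n(v)$ as factors of $\ompr^{(n)} = \sighat_n(\ompr^{(n+1)})$. Reading off the letter immediately preceding the occurrence of $\sighat_n(v)$ gives a $1$ in the first case and (since $N\geq 1$) a $0$ in the second, so both $0\sighat_n(v)$ and $1\sighat_n(v)$ are factors of $\ompr^{(n)}$ and $\sighat_n(v)$ is left special, as required.

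For assertion (2), the key observation I would use is that the letter $1$ appears in $\ompr^{(n)}$ \emph{only} as the last letter of an image $\sighat_n(0) = 0^{d_n}1$, since $\sighat_n(1)=0^N$ contains no $1$. Thus a factor of $\ompr^{(n)}$ ending in $1$ necessarily ends at a \emph{synchronisation point} of the substitution, that is, at the end of a complete image $\sighat_n(a)$. The main work is to show that $v$ also \emph{starts} at a synchronisation point, and then to locate a left-special preimage~$u$.

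Since $v$ is left special, both $0v$ and $1v$ are factors of $\ompr^{(n)}$. First I would exploit the extension $1v$: the $1$ preceding $v$ must itself be the last letter of some $\sighat_n(0)$, so the position immediately after this $1$ (which is exactly where $v$ starts) is a synchronisation point. Consequently there is a factor $u$ of $\ompr^{(n+1)}$ with $\sighat_n(u) = v$, and the preceding block $\sighat_n(0)$ witnesses that $0u$ is a factor of $\ompr^{(n+1)}$.

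To produce the second left extension of $u$, I would turn to the extension $0v$. Knowing that $v$ starts at a synchronisation point, the $0$ immediately preceding $v$ must be the final letter of either $\sighat_n(0)=0^{d_n}1$ or $\sighat_n(1)=0^N$; the former is impossible because $\sighat_n(0)$ ends in $1$. Hence $0v$ comes from $\sighat_n(1u) = 0^N \sighat_n(u)$, and so $1u$ is a factor of $\ompr^{(n+1)}$. Combined with $0u$ this shows that $u$ is left special, concluding the proof. The main subtlety I foresee is this synchronisation bookkeeping: the whole argument hinges on the fact that the letter $1$ has a unique ``desubstituted'' position, which is precisely why the hypothesis that $v$ ends in $1$ is indispensable and why the analogous statement fails without it.
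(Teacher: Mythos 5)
Your proposal is correct and follows essentially the same route as the paper: part (1) is the identical direct computation, and in part (2) both arguments rest on the observation that every $1$ in $\ompr^{(n)}$ is the final letter of an image $\sighat_n(0)$, use the extension $1v$ to see that $v$ is exactly $\sighat_n(u)$ with $0u$ a factor, and then use $0v$ to force the preceding block to be $\sighat_n(1)=0^N$, giving $1u$. The only cosmetic difference is that the paper packages the synchronisation step via a preimage $u$ of minimal length, whereas you argue directly with synchronisation points.
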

	
	\begin{proof}
		(1) We have $ 0v, 1v \subset \ompr^{(n+1)} $. Then $ \sighat_n(0v) = 0^{d_n} 1 \sighat_n(v) $ and  $ \sighat_n(1v) = 0^{N} \sighat_n(v) $, which implies that $ \sighat_n(v) $ is a left special factor of $\ompr^{(n)}$.
		
		(2) Consider a word $u \subset \ompr^{(n+1)}$ of minimal length such that $v \subset \sighat_n(u)$. Since $v$ ends with $1$, by minimality of $u$ it holds that $v$ is a suffix of $\sighat_{n}(u)$. Since $v$ is a left special factor, $1v$ is a factor of $\ompr^{(n)}$, so it follows from the shape of the substitution $\sighat_n$ that $1v \subset \sighat_n(0u)$. The minimality of $u$ implies $\sighat_n(u)=v$. Since $0v$ is also a factor of $\ompr^{(n)}$, it turns out that $0v \subset \sighat_n(1u)$, and therefore $u$ is a left special factor of $\ompr^{(n+1)}$.
	\end{proof}
		
	 The following characterization of infinite left special factors is based on \cite[Section 3]{FMP}.
	
	\begin{lemma}\label{infinitelsf}
		The only infinite left special factor of $\ompr$ is $\ompr$ itself.
	\end{lemma}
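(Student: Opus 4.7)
My plan is to prove the statement in two parts: first that $\ompr$ itself is an infinite left special factor, and then that it is the unique one. Both parts rely on the desubstitution Lemma~\ref{lsfdesubst} together with Lemma~\ref{lem:maxFactor}, which implies that in each $\ompr^{(n)}$ the letter $1$ is always preceded by $0$ (since every $1$ arises as the final letter of an image $\sighat_n(0)=0^{d_n}1$). Consequently, $11$ is not a factor of $\ompr^{(n)}$, so the single letter $1$ is never a left special factor of $\ompr^{(n)}$.

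For existence, I would first observe that $0$ is a left special factor of every $\ompr^{(n+1)}$: the extension $10$ is present because $1$ occurs in $\ompr^{(n+1)}$ and is always followed by $0$, and the extension $00$ is present either because $\sighat_{n+1}(0)$ starts with $0^{d_{n+1}}$ with $d_{n+1}\geq 2$, or, in the degenerate case $d_{n+1}=1$, because $\sighat_{n+1}(10)=0^N 0^{d_{n+1}}1$ contains $00$. Iterating Lemma~\ref{lsfdesubst}(1) then yields that the prefix $\sipr_{n+1}=\sighat_1\circ\cdots\circ\sighat_n(0)$ of $\ompr$ is a left special factor of $\ompr$ for every $n\geq 1$. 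Since $|\sipr_{n+1}|\to\infty$ and any prefix of a left special factor is itself left special, every prefix of $\ompr$ is a left special factor, so $\ompr$ is an infinite left special factor of itself.

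For uniqueness, let $v$ be an arbitrary infinite left special factor of $\ompr$. The plan is to construct recursively, for each $n\geq 1$, an infinite left special factor $v^{(n)}$ of $\ompr^{(n)}$ with $v^{(1)}=v$ and $\sighat_n(v^{(n+1)})=v^{(n)}$. The key step is the inductive construction of $v^{(n+1)}$ from $v^{(n)}$: since $\ompr^{(n)}$ has bounded $0$-blocks by Lemma~\ref{lem:maxFactor}, the sequence $v^{(n)}$ must contain infinitely many occurrences of $1$ (otherwise a tail of $v^{(n)}$ consisting only of $0$'s would produce factors of $\ompr^{(n)}$ with arbitrarily long $0$-blocks); hence $v^{(n)}$ admits arbitrarily long prefixes ending in $1$. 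Applying Lemma~\ref{lsfdesubst}(2) to each such prefix, and using that $\sighat_n$ admits a unique desubstitution on finite words ending in $1$ (every $1$ unambiguously marks the end of an image of the letter $0$), these preimages form a nested family of left special factors of $\ompr^{(n+1)}$ which converge to an infinite word $v^{(n+1)}$, each of whose prefixes is a left special factor of $\ompr^{(n+1)}$. Now, by the initial observation, each $v^{(n)}$ must begin with $0$; therefore $v^{(n-1)}=\sighat_{n-1}(v^{(n)})$ begins with $\sighat_{n-1}(0)$, and iterating this one letter expansion, $v$ begins with $\sighat_1\circ\cdots\circ\sighat_n(0)=\sipr_{n+1}$, which is also a prefix of $\ompr$. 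Letting $n\to\infty$ forces $v=\ompr$.

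The main difficulty I foresee is making the desubstitution step rigorous: one must verify that each prefix of $v^{(n)}$ ending in $1$ has a uniquely determined preimage under $\sighat_n$, that longer such prefixes produce preimages extending the shorter ones, and that the resulting limit is truly an infinite left special factor of $\ompr^{(n+1)}$ rather than merely a sequence of left special factors of unbounded length. This is the technical heart of the argument that propagates infinite left special factors through the entire tower of desubstitutions.
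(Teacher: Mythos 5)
Your proof is correct and follows essentially the same route as the paper: both build the tower $v^{(1)},v^{(2)},\ldots$ of infinite left special factors via iterated application of Lemma~\ref{lsfdesubst}(2) to prefixes ending in $1$. The only (minor) differences are in the bookkeeping: for existence you deduce left specialness of the prefixes $\sipr_{n}$ from Lemma~\ref{lsfdesubst}(1) applied to the letter $0$, where the paper reads it off the recurrence \eqref{recurrencesighat}, and for uniqueness you conclude directly that $v$ begins with every $\sipr_n$, where the paper runs a strictly-decreasing-distance contradiction; both endgames are sound, and you correctly identify (and resolve) the nestedness of preimages that the paper's proof passes over silently.
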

	
	\begin{proof}
		Recall that $(\sipr_k)_{k\in\N}$ is a nested sequence of prefixes of $\ompr$ satisfying \eqref{recurrencesighat}. First, note that the last letter of $\sipr_k$ is congruent to $k+1$ modulo $2$. Also, for $k \geq 2$, both $ \sipr_k \sipr_k \subset \ompr$ and $ \sipr_{k-1} \sipr_k \subset \ompr$, giving us that $\sipr_k$ is a left special factor for every $k\geq1$. This implies that every prefix of $\ompr$ is a left special factor of $\ompr$ and hence $\ompr$ is an infinite left special factor of itself.
		
		It remains to show uniqueness. Let $\nu$ be an infinite left special factor of $\ompr$. Recall that $\ompr = \sighat_1(\ompr^{(2)})$. We show first that there exists an infinite left special factor of $\ompr^{(2)}$, namely $\nu^{(2)}$, such that $\nu = \sighat_1(\nu^{(2)})$. Consider a prefix $v$ of $\nu$ ending in the letter $1$. By hypothesis, this prefix is a left special factor. By part $(2)$ of Lemma~\ref{lsfdesubst}, there exists a left special factor $v^{(2)}$ of $\ompr^{(2)}$ such that $\sighat_1(v^{(2)})=v$.	The prefix $v$ can be chosen to be arbitrarily large, which means $v^{(2)}$ can be made to be arbitrarily large. This implies the existence of an infinite left special factor $\nu^{(2)}$ of $\ompr^{(2)}$ such that $\nu = \sighat_1(\nu^{(2)})$. Iterating this reasoning, we can obtain a sequence $(\nu^{(n)})_{n\geq 1}$, $\nu^{(1)}=\nu$, such that for each $n$, $\nu^{(n)}$ is an infinite left special factor of $\ompr^{(n)}$ and $\sighat_{n}(\nu^{(n+1)}) = \nu^{(n)}$. We use this to show that $\nu = \ompr$. 
		
		Suppose $\nu \neq \ompr$, then $\ompr^{(n)}\neq \nu^{(n)}$ for each $n\geq 1$. This enables the definition of 
		\[ d(\ompr^{(n)}, \nu^{(n)}) := \min \{ k\,:\, \ompr^{(n)}_k\neq \nu^{(n)}_k\}. \]
		Note that the substitutions $\sighat_{n}$ strictly increase the length of a word unless the word is $1$ and $N=1$. Since the image under $\sighat_n$ of both letters starts with $0$, it follows that $1$ is not a prefix of $\ompr^{(n)}$ nor of $\nu^{(n)}$ for any $n \geq 1$. Therefore, it holds for all $n \geq 1$ that $d(\ompr^{(n)}, \nu^{(n)}) > 1$ and $d(\ompr^{(n+1)},\nu^{(n+1)}) < d(\ompr^{(n)}, \nu^{(n)}),$ so the sequence of distances $(d(\ompr^{(n)}, \nu^{(n)}))_{n\in\N}$ is strictly decreasing yet strictly positive, a contradiction. This shows that $\nu = \ompr$.	
\end{proof}
	
	Next, we state some lemmas regarding maximal left special factors and total bispecial factors.

	\begin{lemma}\label{lsfmaxdesubst}
		Let $v$ be a maximal left special factor of $\ompr^{(n)}$ containing the letter $1$. Then there exists a maximal left special factor $u$ of $\ompr^{(n+1)}$ such that $ v = \sighat_n(u) 0^{d_n}$.
	\end{lemma}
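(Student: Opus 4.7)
The plan is to pin down the exact shape of $v$, then use Lemma~\ref{lsfdesubst} to descend to $\ompr^{(n+1)}$ and obtain $u$, and finally verify the maximality of $u$ by ascending again through Lemma~\ref{lsfdesubst}(1).

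Since $v$ contains the letter $1$, I write $v = v_0 1 0^k$ where $v_0 1$ is the prefix of $v$ up to and including the last occurrence of the letter $1$, and $k \ge 0$ is the length of the trailing $0$-block. I claim $k = d_n$. By Lemma~\ref{lem:maxFactor}, the only maximal $0$-blocks of $\ompr^{(n)}$ have length $d_n$ or $d_n + N$, so $k \le d_n + N$. The driving observation is that for any $k \neq d_n$, the letter in $\ompr^{(n)}$ that immediately follows $v$ is uniquely determined: it is $0$ when $0 \le k < d_n$ (the case $k = 0$ uses the fact that $11 \not\subset \ompr^{(n)}$, which follows from Lemma~\ref{lem:maxFactor}) or when $d_n < k < d_n + N$, and it is $1$ when $k = d_n + N$. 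Combining this forcing with the left speciality of $v$ (which gives $0v, 1v \subset \ompr^{(n)}$) yields that $v0$ (in the first case) or $v1$ (in the second) is itself left special, contradicting the maximality of $v$. Hence $k = d_n$. A similar direct check using $11 \not\subset \ompr^{(n)}$ rules out $v_0 = \varepsilon$, since otherwise $v = 1\, 0^{d_n}$ and the left extension $1v$ would contain $11$.

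Because the prefix $v_0 1$ of $v$ is a left special factor of $\ompr^{(n)}$ ending in $1$, Lemma~\ref{lsfdesubst}(2) provides a left special factor $u$ of $\ompr^{(n+1)}$ with $\sighat_n(u) = v_0 1$. Since $\sighat_n(u)$ ends in $1$ and $\sighat_n(0) = 0^{d_n}1$ ends in $1$ while $\sighat_n(1) = 0^N$ ends in $0$, the word $u$ must end in $0$. Hence $\sighat_n(u)\, 0^{d_n} = v_0 1\, 0^{d_n} = v$, as required. To check that $u$ is maximal left special, suppose for contradiction that $ua$ is left special for some $a \in \{0,1\}$. If $a = 0$, then Lemma~\ref{lsfdesubst}(1) gives that $\sighat_n(u0) = v_0 1\, 0^{d_n} 1 = v 1$ is left special in $\ompr^{(n)}$, contradicting the maximality of $v$. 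If $a = 1$, the same forcing principle in $\ompr^{(n+1)}$ (using $11 \not\subset \ompr^{(n+1)}$) upgrades $u1$ being left special to $u10$ being left special; then Lemma~\ref{lsfdesubst}(1) gives that $\sighat_n(u10) = v_0 1\, 0^{d_n + N} 1$ is left special in $\ompr^{(n)}$, and since $v0 = v_0 1\, 0^{d_n+1}$ is a prefix of $v_0 1\, 0^{d_n + N} 1$, the factors $0v0$ and $1v0$ both occur in $\ompr^{(n)}$, so $v0$ is left special, again contradicting the maximality of $v$.

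I expect Step 2 (pinning down that $k = d_n$ exactly) to be the main obstacle: the case analysis splits into several sub-intervals for $k$ relative to the two admissible block lengths $d_n$ and $d_n+N$, and the edge case $k = 0$ requires the extra input $11 \not\subset \ompr^{(n)}$. Once that analysis is in place, the descent to $u$ via Lemma~\ref{lsfdesubst}(2) and the verification of its maximality via Lemma~\ref{lsfdesubst}(1) are essentially mechanical.
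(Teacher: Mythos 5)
Your proof is correct and takes essentially the same route as the paper's: both pin down the form $v = v_0 1\, 0^{d_n}$ from the fact that a maximal left special factor cannot have a unique right extension (via Lemma~\ref{lem:maxFactor}), descend with Lemma~\ref{lsfdesubst}(2), and verify maximality of $u$ by the two-case ascent with Lemma~\ref{lsfdesubst}(1), including the same $u1 \Rightarrow u10$ upgrade. Your treatment is merely more explicit about the case analysis on the trailing $0$-block and the exclusion of $v_0 = \varepsilon$, which the paper leaves implicit.
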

	
	\begin{proof}
		First, note that if a left special factor has a unique right extension it cannot be maximal. Given a maximal left special factor $v$, since it has more than one right extension and contains a $1$ by assumption, it must be of the form $ v = v_0 v_1 \cdots v_s 1 0^{d_n} $ for some $s \in \N$ (see part (2b) of Lemma~\ref{lem:maxFactor}). By part $(2)$ of Lemma \ref{lsfdesubst}, there exists a left special factor $u$ of $\ompr^{(n+1)}$ such that $ v = \sighat_n(u) 0^{d_n} $. It remains to show that $u$ is maximal. Suppose it is not, then there exists a letter $ a \in \cA $ such that $ ua $ is also a left special factor. By part $(1)$ of Lemma \ref{lsfdesubst}, $\sighat_n( ua )$ is a left special factor of $\ompr^{(n)}$. If $a = 0$ then $v$ is a proper prefix of the left special factor $\sighat_n( u0 )$, contradicting the maximality of $v$. If $a = 1$ then $u10$ is a left special factor because $1$ is always followed by $0$, and in this case $v$ is also a proper prefix of the left special factor $\sighat_n( u10 )$, contradicting again the maximality of $v$.
	\end{proof}
	
	\begin{lemma}\label{tbfdesubst}
		Let $v$ be a total bispecial factor of $\ompr^{(n)}$ containing the letter $1$. Then there exists a total bispecial factor $u$ of $\ompr^{(n+1)}$ such that $ v = \sighat_n(u) 0^{d_n}$.
	\end{lemma}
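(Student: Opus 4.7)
The plan is to mimic the approach of Lemma~\ref{lsfmaxdesubst} but produce a total bispecial preimage rather than a maximal one. First I would determine the shape of the end of $v$. Since $v$ is total bispecial, both $v0$ and $v1$ are factors (indeed, both are left special); and since $v$ contains a $1$ and $1$ is the only maximal $1$-block of $\ompr^{(n)}$ by Lemma~\ref{lem:maxFactor}(2a), $v$ cannot end in $1$. Writing $v = w 1 0^k$, the condition that $v1$ is a factor forces the terminal $0$-block to coincide with a full $0$-block of $\ompr^{(n)}$, so $k\in\{d_n, d_n + N\}$; the condition that $v0$ is a factor forces this block to be extendable, ruling out $k = d_n + N$. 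Hence $v = w 1 0^{d_n}$.

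Since $w 1$ is a left special factor of $\ompr^{(n)}$ ending in $1$, Lemma~\ref{lsfdesubst}(2) produces a left special factor $u$ of $\ompr^{(n+1)}$ with $\sighat_n(u) = w 1$, and therefore $v = \sighat_n(u) 0^{d_n}$. One checks that $u$ must end in $0$ (because $\sighat_n(0)$ ends in $1$ while $\sighat_n(1)$ ends in $0$) and must start in $0$ (because $1 u$ is a factor and $11$ is not a factor of $\ompr^{(n+1)}$). It remains to show that $u 0$ and $u 1$ are both left special. The first is easy: $v 1 = \sighat_n(u 0)$ is a left special factor of $\ompr^{(n)}$ ending in $1$, so Lemma~\ref{lsfdesubst}(2) combined with the injectivity of $\sighat_n$ (each $1$ in its image marks the end of a $\sighat_n(0)$-block, which allows one to recover the preimage uniquely) yields that $u 0$ is left special.

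The main obstacle is $u 1$, because $v 0 = \sighat_n(u) 0^{d_n + 1}$ is not itself in the image of $\sighat_n$. My plan here is a recognizability argument. At any occurrence of $v$ in $\ompr^{(n)}$ the final $1$ of $\sighat_n(u)$ coincides with the end of a $\sighat_n(0)$-block, and since $u$ starts with $0$ the leading $0^{d_n}$ of $\sighat_n(u)$ then also coincides with a $\sighat_n(0)$-prefix; this pins down the full alignment of $\sighat_n(u)$ with the substitution decomposition. Now $0^{d_n + 1}$ cannot be produced by a single following $\sighat_n(0) = 0^{d_n} 1$; the only way to continue with this many zeros is that the next letter of $\ompr^{(n+1)}$ after $u$ is $1$, contributing $\sighat_n(1) = 0^N$, and the letter after that is $0$ (since $11$ is forbidden), contributing $\sighat_n(0) = 0^{d_n} 1$ and together producing a $0$-block of length $d_n + N$. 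Applying this to the witnesses $0 v 0$ and $1 v 0$ of the left-specialness of $v 0$, and noting on the left that the initial $0$ of $0 v 0$ must be the last letter of a preceding $\sighat_n(1)$-block (so preceded by $1$ in $\ompr^{(n+1)}$) while the initial $1$ of $1 v 0$ must be the last letter of a preceding $\sighat_n(0)$-block (so preceded by $0$ in $\ompr^{(n+1)}$), one deduces that $1 u 1 0$ and $0 u 1 0$ are both factors of $\ompr^{(n+1)}$. In particular $0 u 1$ and $1 u 1$ are factors, so $u 1$ is left special. Combined with the conclusion for $u 0$, this shows that $u$ is total bispecial.
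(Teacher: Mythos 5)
Your proposal is correct and follows essentially the same route as the paper's proof: write $v=\sighat_n(u)0^{d_n}$ using Lemma~\ref{lsfdesubst}(2), then lift the four two-sided extensions of $v$ to extensions of $u$ via recognizability of $\sighat_n$ (the paper states the same key facts, e.g.\ that $\sighat_n(u)0^{d_n+1}$ can only occur inside $\sighat_n(u10)$ and that $0\sighat_n(u)$ forces a preceding $1$, just more tersely). The only cosmetic difference is that you handle $u0$ by re-invoking Lemma~\ref{lsfdesubst}(2) together with injectivity of $\sighat_n$, whereas the paper reads it off directly from $0v1\subset\sighat_n(1u0)$ and $1v1\subset\sighat_n(0u0)$.
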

	
	\begin{proof}
		Since $v$ has more than one right extension and contains the letter $1$ it must be of the form $ v = v_0 v_1 \cdots v_s 1 0^{d_n} $ for some $s \in \N$ (see part (2b) of Lemma~\ref{lem:maxFactor}). By part $(2)$ of Lemma \ref{lsfdesubst}, there exists a left special factor $u$ of $\ompr^{(n+1)}$ such that $ v = \sighat_n(u) 0^{d_n} $. It remains to show that $u$ is a total bispecial factor. By hypothesis, $0v0,0v1,1v0,1v1 \subset \ompr^{(n)}$.  We have
		\[
		\begin{split}
			0 \sighat_n(u) 0^{d_n + 1} & \subset \sighat_n(1u10), \\
			0 \sighat_n(u) 0^{d_n} 1 & \subset \sighat_n(1u0), \\
			1 \sighat_n(u) 0^{d_n + 1} & \subset \sighat_n(0u10), \\
			1 \sighat_n(u) 0^{d_n} 1 & \subset \sighat_n(0u0). \\
		\end{split}
		\]
		It follows from the shape of $\sighat_n$ that $0 \sighat_n(u)$ can only occur as a factor of $\sighat_n(1u)$ and $1 \sighat_n(u)$ can only occur as a factor of $\sighat_n(0u)$; moreover, $\sighat_n(u) 0^{d_n + 1}$ can only occur as a factor of $\sighat_n(u10)$ and $\sighat_n(u) 0^{d_n} 1$ can only occur as a factor of $\sighat_n(u0)$.
		This implies that $u$ is a total bispecial factor of $\ompr^{(n+1)}$.
	\end{proof}
	
	\subsection{Results on factor complexity}
	Fix $N\geq 2$ and $x\in[0,1]\setminus\Q$. Consider the NCF sequence $\omega(x,N)$ and its dual $\ompr(x,N)$. Consider the corresponding nested sequences of prefixes $(\Sigma_n)_{n\geq 1}$ and $(\sipr_n)_{n\geq 1}$ and the respective recurrence relations \eqref{recurrencesigma} and \eqref{recurrencesighat}.
	
	Define, for each $k\geq 1$, the words 
	\[ \Shat_k := \sipr_k^{d_k} \sipr_{k-1}^{d_{k-1}} \cdots \sipr_1^{d_1}, \qquad \That_k := \sipr_k^{N-1} \Shat_k \]
	and
	\[ S_k := { \Sigma}_k^{d_k} \Sigma_{k-1}^{d_{k-1}} \cdots {\Sigma}_1^{d_1},\qquad T_0:=1^{N-1},\qquad T_k := {\Sigma}_k^{ N-1} S_k.\]
	
	Define the numbers $\smallthat_0 < \smallshat_1 < \smallthat_1 < \smallshat_2 < \smallthat_2 < \cdots$ as 
	\[ 
	\smallshat_k := |\Shat_k|,\quad \smallthat_0=0,\quad \smallthat_k := |\That_k|\qquad (k\geq1).  
	\]
	
	Define the numbers $t_0 < s_1 < t_1 < s_2 < t_2 < \cdots $ as 
	\[ 
	s_k:=|S_k|\quad (k\geq1),\qquad  t_k:=|T_k|\quad (k\geq0). 
	\]
	Recall the definition of ${p_n}$ and ${q_n}$ with $n\geq -1$ given in \eqref{convergents}. The main result of this section is the following.
	
	\begin{theorem}\label{thm:complexity}
		The factor complexity functions of $\omega=\omega(x,N)$ and $\ompr=\ompr(x,N)$ satisfy 
\[
p_\omega(n) \leq 2n \qquad\text{and}\qquad p_\ompr(n) \leq 2n \qquad\qquad(n \geq 1).
\]
In particular, they are given by
		\begin{equation}\label{complexity}
			p_{\omega}(n)=
			\begin{cases}
				1, & n = 0, \\
				2n,& 1\leq n\leq N-1,\\
				n + 1 + \sum_{j=-1}^{k-1}(p_{j} + q_{j})(N - 1) ,& t_k< n \leq s_{k+1},\\
				2n + 1 + \sum_{j=-1}^{k-1} (p_{j} + q_{j}  ) (N - 1) - s_k,& s_k < n \leq t_k
				
			\end{cases}
		\end{equation}
		and
		\begin{equation}\label{complexityprime}
			p_{\ompr}(n)=
			\begin{cases}
				1, & n = 0, \\
				n + 1 + \sum_{j=0}^{k-2}(\frac{p_j}{N}+q_j)(N - 1) ,& \smallthat_{k-1}< n \leq \smallshat_k,\\
				2n + 1 + \sum_{j=0}^{k-2}(\frac{p_j}{N}+q_j)(N - 1) - \smallshat_k,& \smallshat_k < n \leq \smallthat_k.\\
			\end{cases}
		\end{equation}
	\end{theorem}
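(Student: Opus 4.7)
The plan is to compute $p_\ompr(n)$ first, as the dual sequence has the simpler combinatorial structure (by Lemma~\ref{lem:maxFactor}(2a) the letter $1$ is the only maximal $1$-block), and then to handle $p_\omega(n)$ by a parallel argument. The central tool is identity \eqref{eq.mn}, which reduces the computation to counting, for each $n$, the set $LS_n(\ompr)$ of left special factors of length $n$. Since $\ompr$ is a sequence over a binary alphabet, $|LS_n(\ompr)|\in\{1,2\}$, so it suffices to decide at which lengths there is a second left special factor. By Lemma~\ref{infinitelsf}, the prefixes of $\ompr$ always account for one such factor; any further ones are finite and therefore extend on the right to a maximal left special factor.

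The heart of the proof is the following double classification, which I would establish by induction on~$k$: the total bispecial factors of $\ompr$ are exactly $\{\Shat_k:k\geq 1\}$ and the maximal left special factors of $\ompr$ are exactly $\{\That_k:k\geq 1\}$. Existence is a direct check using Lemma~\ref{lem:maxFactor} together with the identities $\Shat_k=\sipr_k^{d_k}\Shat_{k-1}$ and $\That_k=\sipr_k^{N-1}\Shat_k$ derived from \eqref{recurrencesighat}. Completeness uses Lemmas \ref{lsfmaxdesubst} and \ref{tbfdesubst}: any maximal left special (resp.\ total bispecial) factor of $\ompr$ containing the letter $1$ desubstitutes under $\sighat_1$ into one of $\ompr^{(2)}$ after the suffix $0^{d_1}$ is trimmed off; iterating $k$ times produces a factor of $\ompr^{(k+1)}$ whose length is bounded and which therefore lies in an explicitly described base case (a short prefix-type word). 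Rebuilding it through the cascade of substitutions yields precisely $\That_k$ (resp.\ $\Shat_k$).

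Once the classification is available, a ``second'' finite left special factor is born when the length reaches $|\Shat_k|=\smallshat_k$ (via the total bispecial $\Shat_k$) and dies when it reaches $|\That_k|=\smallthat_k$ (it becomes the maximal left special $\That_k$). Hence $|LS_n(\ompr)|=1$ for $\smallthat_{k-1}\leq n\leq \smallshat_k-1$ and $|LS_n(\ompr)|=2$ for $\smallshat_k\leq n\leq \smallthat_k-1$. Summing via \eqref{eq.mn} from $p_\ompr(0)=1$ yields the piecewise formula \eqref{complexityprime}; the constants $\sum_{j=0}^{k-2}(p_j/N+q_j)(N-1)$ arise from $\smallthat_k-\smallshat_k=(N-1)\,|\sipr_k|$ together with the identity $|\sipr_k|=q_{k-1}+p_{k-1}/N$, which is obtained from the incidence matrix calculation performed in Section~\ref{section:letterfrequency} (specialised to the dual substitutions).

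The analogous result for $\omega$ is obtained by rerunning the same program with the substitutions $\sigma_n$ in place of $\sighat_n$; one establishes analogs of Lemmas \ref{infinitelsf}, \ref{lsfmaxdesubst} and \ref{tbfdesubst}, and then shows that $S_k$ is the total bispecial factor of $\omega$ of length $s_k$ and $T_k$ is the maximal left special factor of length $t_k$ for each $k\geq 1$. The only genuinely new feature in the $\omega$ case is that, by Lemma~\ref{lem:maxFactor}(1a), the maximal $1$-block of $\omega$ is $1^N$, so each of $1,1^2,\ldots,1^{N-1}$ is bispecial and contributes an extra left special factor; this accounts for the initial regime $p_\omega(n)=2n$ for $1\leq n\leq N-1$ and for the role of $T_0=1^{N-1}$ as the ``first'' maximal left special factor, after which the alternating interval structure $(t_k,s_{k+1}]$, $(s_k,t_k]$ emerges. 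I expect the main obstacle to be the completeness half of the classification of maximal left special and total bispecial factors: the cascading $0^{d_n}$ suffixes that appear in Lemmas \ref{lsfmaxdesubst} and \ref{tbfdesubst} and the need for a clean base case require careful bookkeeping in the induction on~$k$, even though each individual step uses only an already established desubstitution lemma.
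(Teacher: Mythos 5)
Your overall strategy coincides with the paper's: desubstitution lemmas for left special, maximal left special and total bispecial factors; identification of the maximal left special factors with the words $\That_k$ and of the branching points with the words $\Shat_k$; and summation via \eqref{eq.mn} using $\smallthat_j-\smallshat_j=(N-1)|\sipr_j|$ and $|\sipr_j|=\frac{p_{j-1}}{N}+q_{j-1}$. (The paper phrases the second half of the classification as ``$\Shat_k$ is the maximal common prefix of $\That_k$ and $\ompr$'', Lemma~\ref{commonpref}, and transfers the result to $\omega$ through the substitution $\tau$ of \eqref{eq:sdef} rather than rerunning the whole argument for $\sigma_n$; these are cosmetic differences.)

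There is, however, an off-by-one error in your count of $|LS_n(\ompr)|$ which propagates into the final formula. You assert $|LS_n(\ompr)|=2$ for $\smallshat_k\le n\le \smallthat_k-1$. But since $\Shat_k$ is the maximal common prefix of $\That_k$ and $\ompr$, the length-$\smallshat_k$ prefix of $\That_k$ \emph{is} $\Shat_k$, hence is a prefix of $\ompr$ and contributes no second left special factor: the second one first exists at length $\smallshat_k+1$ (the total bispecial factor $\Shat_k$ has two left special \emph{extensions}, of length $\smallshat_k+1$). Dually, $\That_k$ itself is still left special, so $|LS_{\smallthat_k}(\ompr)|=2$, not $1$; it is only at length $\smallthat_k+1$ that the second factor disappears. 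The correct ranges are $|LS_n(\ompr)|=2$ exactly for $\smallshat_k<n\le\smallthat_k$, as in Lemma~\ref{prop.complexity}. With your ranges the cumulative sum $p_\ompr(n)=1+\sum_{m=0}^{n-1}|LS_m(\ompr)|$ comes out one too large for every $n$ with $\smallshat_k<n\le\smallthat_k$, i.e.\ you would obtain $2n+2+\sum_{j=0}^{k-2}(\frac{p_j}{N}+q_j)(N-1)-\smallshat_k$ there instead of \eqref{complexityprime} (the two discrepancies cancel only after a full block, so the other regime is unaffected). The fix is purely local --- shift both of your intervals up by one --- but as written this step fails, and the same caveat applies to the parallel computation for $\omega$ with $s_k$ and $t_k$.
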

	
	Before proceeding to the proof, we state and prove some lemmas.
	
	\begin{lemma}\label{usuffix}
		If $v$ is a maximal left special factor of $\ompr=\ompr(x,N)$, then it is of the form $\That_k$ for some $k\geq 1$. 
	\end{lemma}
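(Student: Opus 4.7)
The plan is to classify maximal left special factors by iteratively desubstituting via Lemma~\ref{lsfmaxdesubst}. Since that lemma requires the factor to contain the letter $1$, I first need to handle the base case where the maximal left special factor consists only of $0$'s; this base case will turn out to correspond to $\That_1$ at the appropriate level, and the iterative step will then rebuild a general $\That_k$.

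For the base case, suppose $v = 0^j$ for some $j \geq 1$. By Lemma~\ref{lem:maxFactor}(2b), the maximal $0$-blocks of $\ompr$ have lengths exactly $d_1$ and $d_1+N$, so $0^j$ is a factor iff $j \leq d_1 + N$, and $10^j 1$ is a factor iff $j \in \{d_1, d_1+N\}$. From these facts one checks that $0^j$ is left special iff $j \leq d_1 + N - 1$, and that $0^j$ is maximal iff neither $0^{j+1}$ nor $0^j 1$ is left special. The first condition fails for $j \leq d_1 + N - 2$, and using $N \geq 2$ (so that no $0$-block of length $d_1 + N - 1$ can sit between two $1$'s) the second condition then forces $j = d_1 + N - 1$. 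Hence the unique all-zero maximal left special factor of $\ompr$ is $0^{d_1 + N - 1} = \sipr_1^{N-1}\sipr_1^{d_1} = \That_1$; the same reasoning applied to $\ompr^{(k)} = \ompr(T_N^{k-1}(x), N)$ shows that its unique all-zero maximal left special factor is $0^{d_k + N - 1}$.

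Now suppose $v$ contains the letter $1$. By Lemma~\ref{lsfmaxdesubst} there is a maximal left special factor $v^{(2)}$ of $\ompr^{(2)}$ with $v = \sighat_1(v^{(2)})\,0^{d_1}$. Iterating yields a sequence $v = v^{(1)}, v^{(2)}, \ldots$ with $v^{(j)} = \sighat_j(v^{(j+1)})\,0^{d_j}$ as long as $v^{(j)}$ contains a $1$. Since $|\sighat_j(w)| \geq 2|w|$ for every nonempty $w$ (because $|\sighat_j(0)| = d_j + 1 \geq 3$ and $|\sighat_j(1)| = N \geq 2$), the lengths $|v^{(j)}|$ are strictly decreasing, so the iteration must terminate at some index $k$ with $v^{(k)}$ containing no $1$; by the base case applied to $\ompr^{(k)}$ we obtain $v^{(k)} = 0^{d_k + N - 1}$. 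Unfolding the recursion and using $\sighat_1 \circ \cdots \circ \sighat_{j-1}(0) = \sipr_j$, we compute
\[
v = \sipr_k^{d_k+N-1}\,\sipr_{k-1}^{d_{k-1}} \cdots \sipr_1^{d_1} = \sipr_k^{N-1}\,\Shat_k = \That_k,
\]
as desired. I expect the main obstacle to be the base-case classification, which relies crucially on the fact that only two distinct maximal $0$-block lengths appear in $\ompr$; the inductive step, by contrast, is a bookkeeping exercise given Lemma~\ref{lsfmaxdesubst} and the length bound on $\sighat_j$.
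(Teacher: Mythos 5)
Your proof is correct and follows essentially the same route as the paper's: handle the all-zero case separately, then iterate Lemma~\ref{lsfmaxdesubst} until the letter $1$ disappears and unfold the recursion to recover $\That_k$. The only difference is that you spell out the base-case classification of all-zero maximal left special factors (which the paper dismisses as ``not hard to check''), and your argument there is sound.
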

	
	\begin{proof}
		Let $ v = v^{(1)} $ be a maximal left special factor of $\ompr$. Then either it contains a $1$ or not. Suppose it does not contain a $1$. It is not hard to check that the only maximal left special factor of $\ompr^{(n)}$ that does not contain the letter $1$ is $0^{d_n + N - 1}=\That_1$. Now suppose it does contain a $1$. By Lemma~\ref{lsfmaxdesubst}, we can write $ v^{(1)} = \sighat_1 (v^{(2)}) 0^{d_1}$ for a maximal left special factor $v^{(2)} \subset \ompr^{(2)}$. If $v^{(2)}$ also contains a $1$, we can desubstitute again and obtain a maximal left special factor $v^{(3)} \subset \ompr^{(3)}$ such that $ v^{(2)} = \sighat_2 (v^{(3)}) 0^{d_2}$. Iterating this process, because at each desubstitution the length of the words decreases, we eventually reach a maximal left special factor $v^{(n)}$ of $\ompr^{(n)}$ that does not contain the letter $1$ which is $0^{d_n + N - 1}$. We get
\begin{equation}\label{thatdesubst}
\begin{split}
v 	& = \sighat_1 ( \sighat_2 ( \cdots \sighat_{n-1} ( 0^{d_n + N - 1}) 0^{d_{n-1}} \cdots ) 0^{d_2} ) 0^{d_1}\\
& = ( \sighat_1 \circ \cdots \circ \sighat_{n-1} (0) )^{ d_n + N - 1 } ( \sighat_1 \circ \cdots \circ \sighat_{n-2} (0) )^{d_{n-1} } \cdots 0^{d_1} \\
	& = \sipr_n^{d_n + N - 1} \sipr_{n-1}^{d_{n-1}} \cdots \sipr_1^{d_1} = \That_n
\end{split}
\end{equation}
and the result follows.
	\end{proof}
	
	We now have a complete understanding of maximal left special factors of $\ompr$. It is clear that any left special factor is either a prefix of $\ompr$ (the only infinite left special factor) or a prefix of $\That_k$ for some $k \geq 1$. The following result follows from the definition of $\Shat_k$ and of $\sipr_k$ and uses our previous results on total bispecial factors.
	
	\begin{lemma}\label{commonpref}
		For each $k \geq 1 $, the maximal common prefix of $\That_k$ and $\ompr$ is $\Shat_k$.
	\end{lemma}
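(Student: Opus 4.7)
The plan is to proceed by induction on $k$, exploiting the desubstitution $\ompr=\sighat_1(\ompr^{(2)})$, where $\ompr^{(2)}=\ompr(T_N(x),N)$. Writing $\Shat_k^{(2)}$ and $\That_k^{(2)}$ for the analogues of $\Shat_k$ and $\That_k$ built from the shifted digit sequence $(d_n)_{n\ge 2}$, the relation $\sighat_1(\sipr_n^{(2)})=\sipr_{n+1}$, together with the defining formulas for $\Shat_k$ and $\That_k$, yields the key identities
\[
\Shat_k \;=\; \sighat_1\!\big(\Shat_{k-1}^{(2)}\big)\cdot 0^{d_1},\qquad \That_k\;=\;\sighat_1\!\big(\That_{k-1}^{(2)}\big)\cdot 0^{d_1},
\]
which reduce the claim at level $k$ for $\ompr$ to the analogous claim at level $k-1$ for $\ompr^{(2)}$.

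For the base case $k=1$ I would verify directly: $\Shat_1=0^{d_1}$ and $\That_1=0^{d_1+N-1}$, while $\ompr$ starts with $\sipr_2=0^{d_1}1$, so the longest common prefix of $\That_1$ and $\ompr$ equals $0^{d_1}=\Shat_1$. For the inductive step, the induction hypothesis supplies distinct letters $a,b\in\{0,1\}$ such that $\Shat_{k-1}^{(2)}a$ is a prefix of $\That_{k-1}^{(2)}$ and $\Shat_{k-1}^{(2)}b$ is a prefix of $\ompr^{(2)}$. I would then invoke Lemma~\ref{lem:maxFactor}(2a), which says that each occurrence of $1$ in $\ompr^{(n)}$ is followed by $0$, to argue that whichever of the two sides carries a $1$ at position $|\Shat_{k-1}^{(2)}|$ can be extended to a prefix ending in $\ldots 10$. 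Applying $\sighat_1$ then shows that $\sighat_1(\Shat_{k-1}^{(2)})\cdot 0^{N+d_1}$ is a prefix of the ``$1$-side'' among $\{\That_k,\ompr\}$, while $\sighat_1(\Shat_{k-1}^{(2)})\cdot 0^{d_1}\cdot 1$ is a prefix of the ``$0$-side''. Comparing the two words at position $|\Shat_k|=|\sighat_1(\Shat_{k-1}^{(2)})|+d_1$ reveals a $0$-vs.-$1$ disagreement, and hence the maximal common prefix is exactly $\sighat_1(\Shat_{k-1}^{(2)})\cdot 0^{d_1}=\Shat_k$.

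The main technical obstacle is the edge case in which $\Shat_{k-1}^{(2)}\cdot 1=\That_{k-1}^{(2)}$, which can arise when $(N-1)|\sipr_{k-1}^{(2)}|=1$ (concretely, when $N=2$ and $k=2$). In that situation the extension to $\ldots 10$ inside $\That_{k-1}^{(2)}$ is unavailable, but the suffix $0^{d_1}$ appended in the identity $\That_k=\sighat_1(\That_{k-1}^{(2)})\cdot 0^{d_1}$ supplies exactly the missing block, giving $\That_k=\sighat_1(\Shat_{k-1}^{(2)})\cdot 0^{N+d_1}$ and so preserving the argument. Apart from this edge case, one only has to keep the bookkeeping for the two configurations $(a,b)=(0,1)$ and $(a,b)=(1,0)$ symmetric; the alternation between successive levels then follows automatically.
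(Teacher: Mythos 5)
Your proof is correct, but it runs in the opposite direction from the paper's. The paper first observes that the maximal common prefix $v$ of $\That_k$ and $\ompr$ is a \emph{total bispecial factor} (because all prefixes of $\That_k$ and of $\ompr$ are left special), then desubstitutes $v$ repeatedly via Lemma~\ref{tbfdesubst} until it reaches a total bispecial factor without the letter $1$, which must be $0^{d_n}$, and finally pins down $n=k$ by maximality and recomposes to get $\Shat_k$. You instead induct on $k$ and push the level-$(k-1)$ disagreement \emph{forward} through $\sighat_1$, using the identities $\Shat_k=\sighat_1(\Shat_{k-1}^{(2)})0^{d_1}$ and $\That_k=\sighat_1(\That_{k-1}^{(2)})0^{d_1}$ (both of which check out against the definitions and $\sighat_1(\sipr_n^{(2)})=\sipr_{n+1}$) and locating the first mismatch at position $|\Shat_k|+1$, where the $0$-side reads $1$ and the $1$-side reads $0$ because $\sighat_1(0)=0^{d_1}1$ while $\sighat_1(10)=0^{N+d_1}1$. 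This is a sound and somewhat more elementary argument: it bypasses the total-bispecial machinery entirely, needing only Lemma~\ref{lem:maxFactor}(2a) and the structural identities, at the cost of more positional bookkeeping; the paper's route is less computational and reuses the desubstitution lemmas that organize the whole section. One small remark: the edge case you guard against, $\Shat_{k-1}^{(2)}\cdot 1=\That_{k-1}^{(2)}$, is actually vacuous, since $\That_{k-1}^{(2)}$ always ends in $(\sipr_1^{(2)})^{d_2}=0^{d_2}$ and hence in the letter $0$; so if the letter following $\Shat_{k-1}^{(2)}$ inside $\That_{k-1}^{(2)}$ is $1$, it is never the last letter and the following $0$ is always available. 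Your treatment of the (non-occurring) case is harmless, but you could simply delete it after recording this observation.
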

	
	\begin{proof}
		Fix $k \geq 1$ and suppose $v$ is the maximal common prefix of $\That_k$ and $\ompr$. Then $v$ is a strict prefix of $\That_k$ because $\That_k$ is not a prefix of $\ompr$. By maximality, we have that there are  $a,b\in \{0,1\}$ with  $a \neq b$  such that $va$ is a prefix of $\That_k$ and $vb$ is a prefix of $\ompr$. Because all prefixes of $\That_k$ and all prefixes of $\ompr$ are left special factors, $v$ is a total bispecial factor of $\ompr$. If $k = 1$ then $\That_1 = 0^{d_1 + N - 1}$ and $v = \Shat_1 = 0^{d_1}$. If $k \geq 2$ then $v$ contains the letter $1$. By Lemma~\ref{tbfdesubst},  we can write $ v = v^{(1)} = \sighat_1 (v^{(2)}) 0^{d_1}$ for a total bispecial factor $v^{(2)} \subset \ompr^{(2)}$. If $v^{(2)}$ also contains a $1$, we can desubstitute again and obtain a total bispecial factor $v^{(3)} \subset \ompr^{(3)}$ such that $ v^{(2)} = \sighat_2 (v^{(3)}) 0^{d_2}$. Iterating this process, because at each desubstitution the length of the words decreases, we eventually reach a total bispecial factor $v^{(n)}$ of $\ompr^{(n)}$ that does not contain the letter $1$. It is not hard to check that it must be of the form $v^{(n)} = 0^{d_n}$. Since $v$ is a prefix of $\That_k$, using the desubstitution of $\That_k$ given in \eqref{thatdesubst}, we have $n \leq k.$ By maximality of $v$, we obtain $n = k$. We get
		\begin{equation*}
			\begin{split}
				v 	& = \sighat_1 ( \sighat_2 ( \cdots \sighat_{k-1} ( 0^{d_k}) 0^{d_{k-1}} \cdots ) 0^{d_2} ) 0^{d_1}\\
				& = ( \sighat_1 \circ \cdots \circ \sighat_{k-1} (0) )^{ d_k } ( \sighat_1 \circ \cdots \circ \sighat_{k-2} (0) )^{d_{k-1} } \cdots 0^{d_1} \\
				& = \sipr_k^{d_k} \sipr_{k-1}^{d_{k-1}} \cdots \sipr_1^{d_1} = \Shat_k.\qedhere
			\end{split}
		\end{equation*}
	\end{proof}
	
	The following lemma computes the difference between consecutive terms of $p_{\ompr}$ using left special factors.
	
	\begin{lemma}\label{prop.complexity}
	Let $\ompr=\ompr(x,N)$ with factor complexity function $p_\ompr$. Then,  for every $n \geq 1$,
		
		\begin{equation}\label{difcomplexity}
			p_{\ompr}(n+1) - p_{\ompr}(n)=
			\begin{cases}
				1,& \smallthat_{k-1}< n \leq \smallshat_k,\\
				2,& \smallshat_k < n \leq \smallthat_k.\\
			\end{cases}
		\end{equation}
	\end{lemma}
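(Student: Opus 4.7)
The plan is to apply equation~\eqref{eq.mn}, which reduces the claim to computing $|LS_n(\ompr)|$ for each $n\ge 1$. The starting observation is that every left special factor of $\ompr$ is either a prefix of the unique infinite LSF $\ompr$ itself (by Lemma~\ref{infinitelsf}) or a prefix of some maximal LSF. Indeed, given an LSF $v$, the set of LSFs extending $v$ forms a subtree of the binary tree and is therefore finitely branching; if this subtree is infinite, K\"onig's lemma yields an infinite LSF starting with $v$, which must equal $\ompr$; otherwise, a maximal branch ends at a maximal LSF extending $v$.

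By Lemma~\ref{usuffix}, every maximal LSF has the form $\That_k$ for some $k\ge 1$. To use this list for counting, I also need the converse: each $\That_k$ is genuinely a maximal LSF of $\ompr$. I would obtain this by induction on $k$, running Lemma~\ref{lsfmaxdesubst} in reverse: the base case $\That_1 = 0^{d_1+N-1}$ is directly seen to be a maximal LSF from the characterization of maximal $0$-blocks in Lemma~\ref{lem:maxFactor}(2b) (together with the fact, used in the proof of Lemma~\ref{infinitelsf}, that prefixes of $\ompr$ are LSFs), and the inductive step reconstructs $\That_{k+1}$ from a maximal LSF of $\ompr^{(2)}$ through $\sighat_1$ followed by appending $0^{d_1}$. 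Then Lemma~\ref{commonpref} identifies $\Shat_k$ as the maximal common prefix of $\That_k$ and $\ompr$, so the length-$n$ prefix of $\That_k$ coincides with that of $\ompr$ when $n\le\smallshat_k$, differs from it when $\smallshat_k<n\le\smallthat_k$, and does not exist when $n>\smallthat_k$.

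The remaining step is to verify that the intervals $(\smallthat_{k-1},\smallshat_k]$ and $(\smallshat_k,\smallthat_k]$ (for $k\ge 1$, with the convention $\smallthat_0=0$) pairwise partition the positive integers. This reduces to the strict inequality $\smallthat_k<\smallshat_{k+1}$, which follows from the identities $\smallthat_k = \smallshat_k + (N-1)|\sipr_k|$ and $\smallshat_{k+1} = \smallshat_k + d_{k+1}|\sipr_{k+1}|$ together with $d_{k+1}\ge N$ and $|\sipr_{k+1}|>|\sipr_k|$. Combining the partition with the count from the previous paragraph gives $|LS_n(\ompr)|=1$ when $\smallthat_{k-1}<n\le\smallshat_k$ and $|LS_n(\ompr)|=2$ when $\smallshat_k<n\le\smallthat_k$, which is precisely \eqref{difcomplexity}.

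I expect the most delicate step to be the inductive verification that every $\That_k$ is a maximal LSF of $\ompr$, since Lemma~\ref{usuffix} provides only the reverse implication. Without this converse, the set of maximal LSFs of $\ompr$ could be a proper subfamily of $\{\That_k:k\ge 1\}$, in which case the ``$2$'' case in the formula might fail to be attained at some lengths. The other ingredients—the K\"onig-type tree argument, the invocation of Lemma~\ref{commonpref}, and the elementary comparisons between $\smallshat_k$ and $\smallthat_k$—are routine.
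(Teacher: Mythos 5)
Your proof is correct and follows essentially the same route as the paper's: reduce to counting $|LS_n(\ompr)|$ via \eqref{eq.mn}, classify every left special factor as a prefix of $\ompr$ or of some $\That_k$, and use Lemma~\ref{commonpref} to decide when these prefixes coincide on the first $n$ letters. The one place you go beyond the paper is the explicit verification that each $\That_k$ really is a (maximal) left special factor --- the paper takes this for granted when asserting $|LS^*_n(\ompr)|=1$ in the case $\smallshat_k < n \leq \smallthat_k$ --- and your inductive argument running Lemma~\ref{lsfmaxdesubst} in reverse from the base case $\That_1=0^{d_1+N-1}$ is a legitimate way to supply that missing converse.
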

	
	\begin{proof}
		Let $n\geq 1$. By Lemma~\ref{infinitelsf}, the prefix of $\ompr$ of length $n$ is a left special factor. We have that 
		\[ |LS_n(\ompr)| = 1 + |LS^*_n(\ompr)|, \] 
		where $ LS^*_n(\ompr) $ is the set of left special factors that are not prefixes of $ \ompr $. Given a left special factor $v$ of length $n$, Lemma~\ref{usuffix} implies it is a prefix of $\That_k$ for some $k$. Take $k$ so that $\smallthat_{k-1} < n \leq \smallthat_k$. Suppose $\smallthat_{k-1} < n \leq \smallshat_k$, then $v$ is a prefix of $\Shat_k$. It follows from Lemma~\ref{commonpref} that $v$ is a prefix of $\ompr$, therefore $ LS^*_n(\ompr) = \varnothing $ and so $|LS_n(\ompr)|=1$.

		Suppose now that $\smallshat_k < n \leq \smallthat_k$. In this case, $\That_k$ and $\ompr$ do not coincide in the first $n$ letters, hence $|LS^*_n(\ompr)| = 1 $ and therefore $|LS_n(\ompr)|=2$.
		
		Then
		\[ 
		|LS_n(\ompr)|=
		\begin{cases}
			1,& \smallthat_{k-1}< n \leq \smallshat_k,\\
			2,& \smallshat_k < n \leq \smallthat_k.\\
		\end{cases}\quad
		\] Then \eqref{difcomplexity} follows from \eqref{eq.mn}.
	\end{proof}
	
	We are in position to prove the main theorem of this section.
	
	\begin{proof}[Proof of Theorem \ref{thm:complexity}]
		We first compute the factor complexity function of $\ompr$. Using that~$p_\ompr(0)=1$ (then only word of length $0$ is $\varepsilon$) and $p_\ompr(1)=2$ (the alphabet has two letters), \eqref{complexityprime} follows from \eqref{difcomplexity} by direct computation. Indeed, let $n \geq 1$ and suppose first that $\smallthat_{k-1} < n \leq \smallshat_k$ for some $k$. We obtain from Lemma~\ref{prop.complexity} that 
		\[
		\begin{split}
		p_\ompr(n) &= 1 + (\smallshat_1 - \smallthat_0) + 2(\smallthat_1 - \smallshat_1) + (\smallshat_2 - \smallthat_1)+\cdots + 2(\smallthat_{k-1} - \smallshat_{k-1})+ (n-\smallthat_{k-1}) \\ &= n + 1 + (\smallthat_1 - \smallshat_1) + \cdots + (\smallthat_{k-1} - \smallshat_{k-1}).\\
		\end{split}\]
		Note that  $\smallthat_j= |\sipr_j|(N -1) + \smallshat_j$ for all $j\ge 1$. Hence,
		\[ p_\ompr(n)= n + 1 + \sum_{j=1}^{k-1}|\sipr_j|(N - 1) . \]
		Suppose now that $\smallshat_k < n \leq \smallthat_k$. Then
		\[
		\begin{split}
		p_\ompr(n) &= 1 + (\smallshat_1 - \smallthat_0) + 2(\smallthat_1 - \smallshat_1) + (\smallshat_2 - \smallthat_1)+\cdots +  (\smallshat_k - \smallthat_{k-1}) + 2( n - \smallshat_k) \\ &= 2n + 1 + (\smallthat_1 - \smallshat_1) + \cdots + (\smallthat_{k-1} - \smallshat_{k-1}) - \smallshat_k\\
		&=2n + 1 + \sum_{j=1}^{k-1}|\sipr_j|(N - 1) - \smallshat_k.
		\end{split}\]
		Recall from $\eqref{eq:letterfreq}$ that, given $j \geq 0$, $|\Sigma_j|_0 = q_{j-1}$ and $|\Sigma_j|_1 = p_{j-1}$, hence $|\Sigma_j|=p_{j-1}+q_{j-1}.$ Recall also that, 	using the substitution $\tau$ defined in \eqref{eq:sdef} that maps $\ompr$ to $\omega$, there is a factor $1^N$ in $\Sigma_j$ whenever there is a $1$ in $\sipr_j$. Thus $|\sipr_j|_0 = |\Sigma_j|_0$ and $|\sipr_j|_1 = \frac{|\Sigma_j|_1}{N}$, and therefore $|\sipr_j|=\frac{p_{j-1}}{N}+q_{j-1}.$ This yields \eqref{complexityprime}.
				
		We can obtain the complexity of $\omega$ just like we did for $\ompr$. We have $p_\omega(0)=1$ and $p_\omega(1)=2$. Note that $T_0 = 1^{N-1}$ is a maximal left special factor of $\omega$. If $1 \leq n \leq N-1$, it holds that $LS_n(\omega) = \{ 0^n, 1^n \}$, that is, there are two left special factors; this implies $p_\omega(n) = 2n$. Note that, if a word $u \subset \ompr$ starts with $0$, then $au$ is a left extension of $u$ in $\ompr$ if and only if $a \tau(u)$ is a left extension of $\tau(u)$ in $\omega$. Hence, it is easy to check that $u$ is a left special factor of $\ompr$ if and only if $\tau(u)$ is a left special factor of $\omega$. Moreover, $u$ is a maximal left special factor of $\ompr$ if and only if $\tau(u)$ is a maximal left special factor of $\omega$. Also, $\omega$ is the unique infinite left special factor of itself, by the same arguments as in the proof of Lemma~\ref{infinitelsf}. Moreover, Lemmas \ref{usuffix} and \ref{commonpref} are analogous for $\omega$: the maximal left special factors of $\omega$ are the words $T_k$ for $k \geq 1$ (to which we add $T_0 = 1^{N-1}$), and the maximal common prefix of $\omega$ and $T_k$ is $S_k$. Proceeding like in the proof of Lemma~\ref{prop.complexity}, this yields, for $n \geq N$, 
		\begin{equation}\label{difcomplexityprime}
			|LS_n(\omega)| = 
			\begin{cases}
				1, & \quad t_k < n \leq s_{k+1},\\
				2, & \quad s_k < n \leq t_k.				
			\end{cases}
		\end{equation}
		Finally, \eqref{complexity} follows from \eqref{difcomplexityprime}. 
	\end{proof}

\subsection{Uniform word frequencies and unique ergodicity}
	Fix $N\geq 2$ and $x\in[0,1]\setminus\Q$. Consider the NCF sequence $\omega(x,N)$ and its dual $\ompr(x,N)$. Our results on word combinatorics can contribute to the study of associated symbolic dynamical systems. This will now be exploited.
	
	\begin{definition}[Unique ergodicity]
		We say that a topological dynamical system is uniquely ergodic if it has only one invariant probability measure.
	\end{definition}

	\begin{corollary}
		The topological dynamical systems $(X_{\omega(x,N)},\Sigma)$ and $(X_{\ompr(x,N)},\Sigma)$ are uniquely ergodic.
	\end{corollary}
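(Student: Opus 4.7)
The plan is to derive unique ergodicity from two ingredients: minimality of the subshifts, which follows from the primitivity of the $S$-adic structure, and uniform frequencies of all factors, which follow from the balance property. These are then combined via the classical Oxtoby criterion: a minimal subshift is uniquely ergodic if and only if every factor admits a uniform frequency.

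First, I would verify minimality. Since $d_n \geq N \geq 1$ for every $n\geq 1$, the word $\sigma_n(0)=0^{d_n}1^N$ contains both letters of $\cA$, while $\sigma_n(1)=0$; hence $\sigma_n\circ\sigma_{n+1}(a)$ contains both letters for every $a\in\cA$ and every $n\geq 1$. This primitivity condition on the directive sequence $\bsigma_x$ forces $\omega(x,N)$ to be uniformly recurrent, which is equivalent to minimality of $(X_{\omega(x,N)},\Sigma)$. The same reasoning, applied to $\widehat{\bsigma}_x$ using $\sighat_n(0)=0^{d_n}1$ and $\sighat_n(1)=0^N$, yields minimality of $(X_{\ompr(x,N)},\Sigma)$.

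Second, I would prove that every factor admits a uniform frequency. By Corollary~\ref{cor:N2balgen}, both $\omega(x,N)$ and $\ompr(x,N)$ are $C$-balanced for some $C$. Combined with the generalized right eigenvector obtained in Lemma~\ref{righteigen}, finite balance already forces uniform letter frequencies equal to $\left(\tfrac{1}{x+1},\tfrac{x}{x+1}\right)$ for $\omega(x,N)$ (and an analogous pair for $\ompr(x,N)$). To extend uniformity from letters to an arbitrary factor $w$, I would apply the standard recoding trick: code $\omega(x,N)$ over the alphabet $\cL_{|w|}(\omega(x,N))$ of its factors of length $|w|$, obtaining a new $S$-adic sequence whose directive substitutions are induced by the original $\sigma_n$. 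Primitivity and finite balance of the original directive sequence are inherited, up to a multiplicative constant depending on $|w|$, so the same argument as for letters yields uniform frequencies of every factor $w$, and analogously for $\ompr(x,N)$.

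The main obstacle will be the recoding step: verifying that the higher-level sequence of incidence matrices, whose entries count occurrences of the length-$|w|$ factors rather than letters, is again primitive and admits a generalized right eigenvector, together with a balance condition sufficient to yield uniform convergence of the frequencies. This is well documented in the $S$-adic literature (see for instance \cite{BST2,T}); once it is in place, the Oxtoby criterion immediately gives unique ergodicity of both $(X_{\omega(x,N)},\Sigma)$ and $(X_{\ompr(x,N)},\Sigma)$.
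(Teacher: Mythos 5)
Your overall strategy (minimality plus uniform word frequencies, then Oxtoby) is a genuinely different route from the paper's, and your minimality argument is fine: $\sigma_n\circ\sigma_{n+1}(a)$ contains both letters for every $a$, so the directive sequence is primitive and the limit sequences are uniformly recurrent --- this is essentially the same uniform recurrence argument the paper makes (there phrased as ``every factor occurs in some $\Sigma_k$, which recurs with bounded gaps''). The problem is the second half. The step you yourself flag as the main obstacle is a genuine gap, and the mechanism you propose to close it does not work as stated: $C$-balance on \emph{letters} is not inherited, even up to a multiplicative constant, by the higher-block recoding over the alphabet $\cL_{|w|}$. Letter balance is equivalent to uniform letter frequencies with bounded discrepancy, and uniform letter frequencies together with minimality do not imply uniform frequencies of all factors (there are minimal subshifts with well-defined letter frequencies carrying several ergodic measures that only differ on longer cylinders). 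So ``balance of the original directive sequence is inherited'' is precisely the assertion that needs proof, and nothing in Corollary~\ref{cor:N2balgen} or Lemma~\ref{righteigen} supplies it. The way this is actually done in the $S$-adic literature you cite is not via balance of a recoded sequence but via desubstitution: for $n$ large enough that $|\sighat_1\circ\cdots\circ\sighat_n(a)|>|w|$ for all $a$, occurrences of $w$ are counted (up to boundary terms) by occurrences of letters at level $n$, so invariant measures correspond to points of $\bigcap_n M_{[1,n]}\R^2_+$; combined with Lemma~\ref{righteigen} this would close your argument, but it is a different (and nontrivial) lemma from the one you invoke.

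For comparison, the paper avoids all of this: it already has $p_\omega(n)\le 2n$ and $p_{\ompr}(n)\le 2n$ from Theorem~\ref{thm:complexity}, establishes minimality by uniform recurrence, and then applies Boshernitzan's criterion ($\limsup p_\nu(n)/n<3$ for a minimal subshift implies unique ergodicity). Given that the complexity bound is proved two pages earlier, that route is a two-line deduction, whereas yours requires importing (or reproving) the measure--cone correspondence for non-proper, non-unimodular $S$-adic systems. If you want to keep your approach, replace the balance-inheritance claim by an explicit appeal to that correspondence; as written, the proof has a hole at its central step.
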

	\begin{proof}
		A criterion of Boshernitzan states that unique ergodicity for a minimal subshift $(X_\nu,\Sigma)$ holds if the factor complexity function satisfies $\limsup \frac {p_\nu(n)}{n}<3.$ By \cite[Proposition 7.1.5]{FM}, the subshift $(X_\nu,\Sigma)$ is minimal if and only if $\nu$ is uniformly recurrent, that is, if every factor of $\nu$ occurs in an infinite number of places with bounded gaps.
		
		Any factor $u$ of $\omega(x,N)$ is a factor of $\Sigma_k$ for some $k$, and each $\Sigma_k$ appears infinitely often with bounded gaps, hence $\omega(x,N)$ is uniformly recurrent and hence the subshift $(X_\omega,\Sigma)$ is minimal. Since $p_\omega(n) \leq 2n$, the assertion follows. The reasoning for $\ompr(x,N)$ is analogous. 
	\end{proof}

	\begin{definition}[Uniform word and letter frequency]
		Consider a sequence $\nu=\nu_0\nu_1\cdots \in \cA^\N$ with $\nu_i \in \cA.$ Recall that for each $k,l\in\N$ and $u\in\cA^*$, we denote by $|\nu_k\cdots \nu_{k+l-1}|_u$ the number of occurrences of the word $u$ in the factor $\nu_k\cdots \nu_{k+l-1} \subset \nu$. We say that $\nu$ has uniform word frequency if for each $u\in\cA^*$ there exists $f_\nu(u)\in\R$ which does not depend on $k$ such that
		\[ \lim_{l\to\infty}\frac{|\nu_k\cdots \nu_{k+l-1}|_u}{l}=f_\nu(u).
		\]
	\end{definition}
	
	\begin{corollary}\label{uniformword}
		The continued fraction word $\omega(x,N)$ and its dual $\ompr(x,N)$ have uniform word frequency. Moreover, this holds for all the elements of $X_{\omega(x,N)}$ and $X_{\ompr(x,N)}$.
	\end{corollary}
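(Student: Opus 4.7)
The plan is to deduce this statement directly from the unique ergodicity result of the preceding corollary, via the classical correspondence between unique ergodicity of a minimal subshift and uniform frequencies of factors.

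First I would recall the standard equivalence, going back to Oxtoby: a minimal topological dynamical system $(X,\Sigma)$ on a compact metric space is uniquely ergodic if and only if, for every continuous function $f \colon X \to \R$, the ergodic averages $\frac{1}{l}\sum_{j=0}^{l-1} f(\Sigma^j(\nu))$ converge uniformly in $\nu \in X$ to a constant (namely $\int f \, d\mu$ for the unique invariant probability measure $\mu$). A proof or statement of this fact can be cited from a standard reference on topological dynamics (for instance Walters, or in the symbolic-dynamical context from \cite{FM} or \cite{Q}).

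Next, I would apply this equivalence to the specific continuous functions $f = \mathbf{1}_{[u]}$, where for $u \in \cA^*$ the cylinder set $[u] \subset \cA^\N$ consists of all sequences having $u$ as a prefix. These indicator functions are continuous (cylinders are clopen in the product topology on $\cA^\N$). For any $\nu \in X_{\omega(x,N)}$ and any $k,l \geq 1$, the Birkhoff sum along the shifted orbit starting at $\Sigma^k(\nu)$ evaluates as
\[
\sum_{j=0}^{l-1} \mathbf{1}_{[u]}\bigl(\Sigma^{k+j}(\nu)\bigr) = |\nu_k \cdots \nu_{k+l+|u|-2}|_u,
\]
which differs from $|\nu_k \cdots \nu_{k+l-1}|_u$ by at most $|u|-1$. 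Dividing by $l$ and letting $l \to \infty$, the uniform convergence guaranteed by unique ergodicity yields that the limit $f_\nu(u)$ exists, is independent of $k$, and is moreover the same for every $\nu \in X_{\omega(x,N)}$. The analogous argument applies to $X_{\ompr(x,N)}$.

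Since $\omega(x,N) \in X_{\omega(x,N)}$ and $\ompr(x,N) \in X_{\ompr(x,N)}$, the statement follows. There is no serious obstacle: the previous corollary already established unique ergodicity via Boshernitzan's criterion using the bound $p_\omega(n) \leq 2n$ and $p_{\ompr}(n) \leq 2n$ from Theorem~\ref{thm:complexity}, together with minimality, and the rest is a direct invocation of the classical equivalence. The only care needed is to ensure the subshifts are minimal (so that unique ergodicity applies on the whole of $X_{\omega(x,N)}$ and $X_{\ompr(x,N)}$), which was already verified in the proof of the preceding corollary.
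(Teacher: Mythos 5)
Your proposal is correct and follows essentially the same route as the paper, which simply cites the standard fact (Theorem~7.2.10 of \cite{FM}) that unique ergodicity of the subshift implies uniform word frequencies for all its elements. You merely unpack that citation by spelling out the Oxtoby equivalence applied to cylinder indicator functions, which is a valid and complete elaboration of the same argument.
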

	\begin{proof}
		This result follows from \cite[Theorem 7.2.10]{FM} since the respective dynamical systems are uniquely ergodic. 
	\end{proof}

	\section{Entropy, growth rate, and a Farey map for NCF sequences}\label{sec:dynamics}
	
	\subsection{Entropy and growth rate}
	
We now calculate how fast the words $\subw{n}= \sigma_1\circ\sigma_2\circ\cdots\circ \sigma_n(1)$ grow when $n$ tends to $\infty$. To this end we need the entropy $h(T_N)$ of the $N$-continued fraction map $T_N$. This entropy, which is calculated in~\cite{DKW}, is given by
	\[
	h(T_N)=\frac{\frac{\pi^2}{3}+2Li_2(N+1)+\log(N+1)\log(N)}{\log\left(\frac{N+1}{N}\right)}
	\]
	where $Li_2$ denotes the dilogarithm function defined by 
	\[
	Li_2(x)=\int_0^x \frac{\log(t)}{1-t}dt.
	\]
Our result reads as follows.

\begin{proposition}
For the growth rate of the lengths of the words $\subw{n}=\sigma_1\circ\sigma_2\circ\cdots\circ \sigma_n(1)$ we obtain the formula
\begin{equation}
		\lim_{n\to\infty}\frac{1}{n}\log(|\subw{n}|)=\frac{1}{2}\left(h(T_N)+\log(N)\right).
\end{equation}
\end{proposition}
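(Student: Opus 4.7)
\smallskip

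The plan is to reduce the growth rate of $|\subw{n}|$ to a Birkhoff average of $\log|T_N'|$, and then invoke Rokhlin's entropy formula to identify this average with $h(T_N)$. First I would recall from Section~\ref{sec:NCFwords} that $|\subw{n}|=p_{n-1}+q_{n-1}$ where $(p_n),(q_n)$ are the NCF convergents satisfying~\eqref{convergents}. Since $p_{n-1}/q_{n-1}\to x$, the growth rates of $|\subw{n}|$ and of $q_{n-1}$ coincide, so it suffices to compute $\lim_{n\to\infty}\tfrac{1}{n}\log q_n$.

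Next, write $x_k:=T_N^k(x)$ and, following the standard trick for continued fractions, introduce $Q_n:=q_n+x_nq_{n-1}$ and $P_n:=p_n+x_np_{n-1}$. A short computation using the recurrences~\eqref{convergents} together with the identity $d_n+x_n=N/x_{n-1}$ (which is immediate from $x_{n-1}=N/(d_n+x_n)$) gives $Q_n=(N/x_{n-1})Q_{n-1}$, and since $Q_0=1$ we obtain
\begin{equation*}
Q_n \;=\; q_n+x_n q_{n-1} \;=\; \frac{N^n}{x_0 x_1\cdots x_{n-1}}.
\end{equation*}
Because $d_n\ge N$ forces $q_{n-1}/q_n\le 1/N$, the ratio $Q_n/q_n=1+x_n q_{n-1}/q_n$ is bounded between $1$ and $1+1/N$. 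Hence $\tfrac{1}{n}\log q_n$ and $\tfrac{1}{n}\log Q_n$ have the same limit, and taking logarithms yields
\begin{equation*}
\frac{1}{n}\log q_n \;=\; \log N \;-\; \frac{1}{n}\sum_{k=0}^{n-1}\log x_k \;+\; O(1/n).
\end{equation*}

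Now I would apply Birkhoff's ergodic theorem to the map $T_N$ with its (ergodic) invariant probability measure $\mu$: for $\mu$-a.e.\ $x$,
\begin{equation*}
\frac{1}{n}\sum_{k=0}^{n-1}\log x_k \;\longrightarrow\; \int_0^1\log x\,d\mu(x).
\end{equation*}
Since $|T_N'(x)|=N/x^2$, Rokhlin's entropy formula (applicable to $T_N$ as a piecewise monotone, piecewise $C^1$ expanding map; see the discussion in~\cite{DKW}) gives
\begin{equation*}
h(T_N) \;=\; \int_0^1 \log|T_N'(x)|\,d\mu(x) \;=\; \log N - 2\int_0^1 \log x\, d\mu(x),
\end{equation*}
so that $\int \log x\,d\mu = \tfrac{1}{2}(\log N - h(T_N))$. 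Substituting back,
\begin{equation*}
\lim_{n\to\infty}\frac{1}{n}\log|\subw{n}| \;=\; \lim_{n\to\infty}\frac{1}{n}\log q_{n-1} \;=\; \log N - \frac{\log N - h(T_N)}{2} \;=\; \frac{1}{2}\bigl(h(T_N)+\log N\bigr),
\end{equation*}
as desired.

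The routine parts (identity for $Q_n$, the comparison $Q_n\asymp q_n$, and the final arithmetic) are straightforward; the main point that must be handled carefully is the appeal to Rokhlin's formula, which requires ergodicity of $\mu$ and the standard regularity hypotheses for interval maps with countably many monotone branches. These are by now classical for Gauss-like maps, and the entropy $h(T_N)$ has in fact been computed in \cite{DKW}, so one can invoke the existence and ergodicity of the absolutely continuous invariant measure from there; the rest of the argument is essentially Lévy's classical proof adapted to the $N$-continued fraction setting.
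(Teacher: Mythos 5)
Your argument is correct, and it reaches the same intermediate target as the paper --- namely $\lim_n \tfrac1n\log q_n=\tfrac12(h(T_N)+\log N)$, cf.\ \eqref{eq:entqn} --- but by a genuinely different route. The paper applies the Shannon--McMillan--Breiman theorem to the cylinder partition and then computes the Lebesgue measure of cylinders explicitly, $\lambda(\Delta_n(x))=\frac{N^n}{q_n(q_n+q_{n-1})}$, using the determinant identity $q_np_{n-1}-p_nq_{n-1}=(-N)^n$; the factor $\log N$ and the factor $2$ both emerge from that cylinder-length formula. You instead run L\'evy's classical telescoping argument, $q_n+x_nq_{n-1}=N^n/(x_0\cdots x_{n-1})$ (your recurrence $Q_n=(N/x_{n-1})Q_{n-1}$ and the bound $1\le Q_n/q_n\le 1+1/N$ both check out against \eqref{convergents} and $d_n\ge N$), then convert the Birkhoff average of $\log x$ into entropy via Rokhlin's formula $h(T_N)=\int\log|T_N'|\,d\mu=\log N-2\int\log x\,d\mu$. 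What the paper's route buys is that it never needs Rokhlin's formula or its regularity hypotheses --- only the generator property of the cylinder partition and equivalence of $\mu_N$ with Lebesgue --- at the cost of the explicit cylinder computation. What your route buys is that it bypasses cylinder geometry entirely and is the most direct adaptation of the classical proof for the Gauss map; the appeal to Rokhlin is legitimate here (and is in fact how $h(T_N)$ is obtained in \cite{DKW}), though you should make explicit that $\log x\in L^1(\mu_N)$ and that $\mu_N$ is ergodic before invoking Birkhoff, and note (as with the paper's own proof) that the conclusion holds for $\mu_N$-almost every $x$ rather than for every irrational $x$.
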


\begin{proof}
Recall that $|\subw{n}|_1=p_{n-1}$ and $|\subw{n}|_0=q_{n-1}$. Then 
	\begin{eqnarray*}
		\lim_{n\to\infty}\frac{1}{n} \log(|\subw{n}|)&=&\lim_{n\to\infty}\frac{1}{n}\log(p_{n-1}+q_{n-1})\\
		&=&\lim_{n\to\infty}\frac{1}{n}\log\left(q_{n-1}\left(\frac{p_{n-1}}{q_{n-1}}+1\right)\right)\\
		&=&\lim_{n\to\infty}\frac{1}{n}\log(q_n).
	\end{eqnarray*}
	Thus it remains to show that
	\begin{equation}\label{eq:entqn}
		\lim_{n\to\infty}\frac{1}{n}\log(q_n)=  \frac{1}{2}\left(h(T_N)+\log(N)\right),
	\end{equation}
	where $h(T_N)$ is the entropy of $T_N$. 	Let $$\Delta(d_1,d_2,\ldots, d_n)=\{y\in [0,1] \,:\, y=[0;d_1,d_2,\ldots, d_n,\ldots]_N \}$$ be cylinders of order $n$ and $$\Delta_n(x)=\{y\in [0,1] \,:\, y=[0;d_1(x),d_2(x),\ldots, d_n(x),\ldots]_N \}.$$ Since the cylinders are a finite generator for $T_N$ and give a finite countable partition, we have for almost every $x\in [0,1]$ that 
	\begin{equation}\label{eq:SMBC}
		\lim_{n\to \infty} -\frac{1}{n}\log \left(\mu_N(\Delta_n(x))\right)=h(T_N)
	\end{equation}
	by using the Shannon-McMillan-Breiman-Chung Theorem and a Theorem of Kolmogorov and Sinai (see \cite{DK}, Chapter 6). Here $\mu_N$ is the absolutely continuous invariant measure. Since the measure $\mu_N$ and the Lebesgue measure $\lambda$ are equivalent we have
	\[
	\lim_{n\to \infty} \frac{1}{n}\log \left(\mu_N(\Delta_n(x))\right)=\lim_{n\to \infty} \frac{1}{n}\log \left(\lambda(\Delta_n(x))\right).
	\]
	Now, similar to the cylinders of the regular continued fraction (follow the same lines as \cite{DK}, page 28), we have
	\[
	x=\frac{p_n+p_{n-1}T_N^n(x)}{q_n+q_{n-1}T_N^n(x)}
	\]
	which gives us that the endpoints of $\Delta_n(x)$ are given by $\frac{p_n+p_{n-1}}{q_n+q_{n-1}}$ and $\frac{p_n}{q_n}$. From this and $q_np_{n-1}-p_nq_{n-1}=(-N)^n$ it follows that  
	\[
	\lambda(\Delta_n(x))=\left|\frac{p_n+p_{n-1}}{q_n+q_{n-1}}-\frac{p_n}{q_n}\right|=\frac{N^n}{q_n(q_n+q_{n-1})}.
	\]
	We find
	\begin{eqnarray*}
		\lim_{n\to \infty} \frac{1}{n}\log \left(\lambda(\Delta_n(x))\right)
		&=& \lim_{n\to \infty} \frac{1}{n}\log \left(\frac{N^n}{q_n(q_n+q_{n-1})}\right)\\
		&=& \log(N)- \lim_{n\to \infty} \frac{1}{n}\log \left(q_n(q_n+q_{n-1})\right)\\
		&=& \log(N)- \lim_{n\to \infty} \frac{1}{n}\log \left(q_n^2\left(1+\frac{q_{n-1}}{q_n}\right)\right)\\
		&=& \log(N)- \lim_{n\to \infty} \frac{2}{n}\log (q_n)\\
	\end{eqnarray*}
	which gives us (\ref{eq:entqn}).
\end{proof}

	\subsection{A Farey-like map for greedy NCF expansions}\label{sec:farey}
	Just as in the case of the Gauss map, there is also a slow version of the map $T_N$. Let $F_N:[0,1]\rightarrow[0,1]$ be defined as
	\[
	F_N(x)=
	\begin{cases}
		\frac{N}{x}-N & \text{for } x\in \left(\frac{N}{N+1},1\right],\\
		\frac{Nx}{N-x} & \text{for } x\in \left[0,\frac{N}{N+1}\right];
	\end{cases}
	\]
	see Figure \ref{fig:F2}.
	
	\begin{figure}[ht]
		\centering
		\subfigure{\begin{tikzpicture}[scale=5]
				\draw(0,0)node[below]{\small $0$}--(1,0)node[below]{\small $1$}--(1,1)--(0,1)node[left]{\small $1$}--(0,0);

				\draw[thick, purple!50!black, smooth, samples =20, domain=2/3:1] plot(\x,{2/\x-2});
				\draw[thick, purple!50!black, smooth, samples =20, domain=0:2/3] plot(\x,{2*\x/(2-\x)});

				\draw[dotted](2/3,0)node[below]{\small $\frac{2}{3}$}--(2/3,1);
				
		\end{tikzpicture}}
		\subfigure{\begin{tikzpicture}[scale=5]
				\draw[white] (-0.25,0)--(1,0);
				\draw(0,0)node[below]{\small $0$}--(1,0)node[below]{\small $1$}--(1,1)--(0,1)node[left]{\small $1$}--(0,0);

				\draw[thick, purple!50!black, smooth, samples =20, domain=5/6:1] plot(\x,{5/\x-5});
				\draw[thick, purple!50!black, smooth, samples =20, domain=0:5/6] plot(\x,{5*\x/(5-\x)});

				\draw[dotted](5/6,0)node[below]{\small $\frac{5}{6}$}--(5/6,1);
				
		\end{tikzpicture}}
		
		\caption{The map $F_N$ for $N=2$ on the left and $N=5$ on the right. \hspace{5.5em} }
		\label{fig:F2}
	\end{figure}
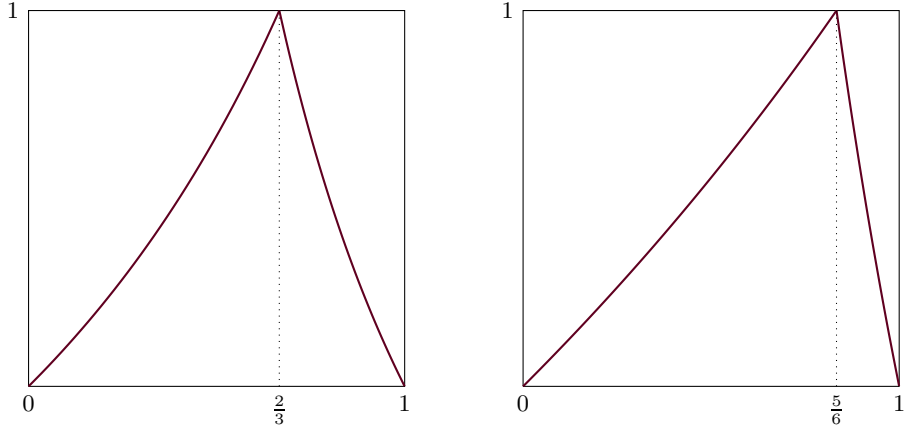

	Now let $x=[0;d_1,d_2,d_3,\ldots]_N$. It is clear that if $d_1=N$ then $F_N(x)=[0;d_2,d_3,\ldots]_N$. If $d_1>N$ we have
	\[
	F_N(x)=\frac{Nx}{N-x}=\frac{N}{\frac{N}{x}-1}=\frac{N}{d_1-1+\displaystyle\frac{N}{d_2+\ddots}}
	\]
	so that $F_N(x)=[0;d_1-1,d_2,d_3\ldots]_N$. This justifies the name Farey-like map.
	It is easy to check that the maps $F_N$ are AFN-maps which are considered in \cite{Z}. Then by following the same arguments as in \cite{KLMM}, every open interval contains a rational number and all rational numbers  are eventually mapped to the indifferent fixed point, we can conclude that for every $N$ there exists a unique absolutely continuous, infinite, $\sigma$-finite $F_N$-invariant measure $\mu_N$ that is ergodic and conservative for $F_N$. We now show the following result. 
\begin{proposition}
The infinite measure $\frac{dx}{x}$ is an invariant measure for the dynamical system $(F_N,[0,1],\mathcal{B})$. Here $\mathcal{B}$ is the Borel $\sigma$-algebra on $[0,1]$. 
\end{proposition}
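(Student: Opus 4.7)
The plan is to verify invariance via the transfer operator identity: a density $h$ is invariant for a piecewise smooth map $F_N$ if and only if
\[
h(y) = \sum_{x \in F_N^{-1}(y)} \frac{h(x)}{|F_N'(x)|}
\]
for almost every $y\in[0,1]$, and then integrate against test functions (or invoke a standard change of variables argument) to conclude that $\mu_N(F_N^{-1}(A)) = \mu_N(A)$ for every Borel set $A$. Here the candidate density is $h(x) = 1/x$, which is locally integrable away from $0$ and yields the (infinite, $\sigma$-finite) measure $\frac{dx}{x}$.

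First I would invert each branch of $F_N$. Solving $\frac{N}{x} - N = y$ on $(\frac{N}{N+1},1]$ gives $x_1 = \frac{N}{N+y}$, and solving $\frac{Nx}{N-x} = y$ on $[0,\frac{N}{N+1}]$ gives $x_2 = \frac{Ny}{N+y}$. Both preimages are well defined for every $y\in[0,1]$, and together they exhaust $F_N^{-1}(y)$. Next I would compute the derivatives: on the first branch $F_N'(x) = -N/x^2$, giving
\[
|F_N'(x_1)| = \frac{N}{x_1^2} = \frac{(N+y)^2}{N},
\]
and on the second branch $F_N'(x) = \frac{N^2}{(N-x)^2}$. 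Using $N - x_2 = \frac{N^2}{N+y}$, one gets
\[
|F_N'(x_2)| = \frac{(N+y)^2}{N^2}.
\]

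Finally I would substitute into the transfer operator identity:
\[
\sum_{x \in F_N^{-1}(y)} \frac{h(x)}{|F_N'(x)|}
= \frac{1/x_1}{(N+y)^2/N} + \frac{1/x_2}{(N+y)^2/N^2}
= \frac{1}{N+y} + \frac{N}{y(N+y)}
= \frac{y+N}{y(N+y)} = \frac{1}{y} = h(y).
\]
This equality, valid for a.e.\ $y\in(0,1]$, proves the invariance. No step should be a real obstacle; the only mild care needed is in checking that the two branches indeed cover all preimages (which follows from the surjectivity of each branch onto $[0,1]$) and that the algebraic simplification at the end collapses cleanly, as it does.
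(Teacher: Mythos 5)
Your computation is correct, but you take a different route from the paper. You verify the pointwise transfer-operator (Perron--Frobenius) identity
\[
h(y)=\sum_{x\in F_N^{-1}(y)}\frac{h(x)}{|F_N'(x)|}
\]
for the density $h(x)=1/x$, computing the two branch inverses $x_1=\frac{N}{N+y}$, $x_2=\frac{Ny}{N+y}$ and the derivative moduli $\frac{(N+y)^2}{N}$ and $\frac{(N+y)^2}{N^2}$; all of these check out, and the sum does collapse to $1/y$. The paper instead works at the level of sets: it computes $F_N^{-1}\bigl((a,b)\bigr)=\bigl(\tfrac{Na}{N+a},\tfrac{Nb}{N+b}\bigr)\cup\bigl(\tfrac{N}{N+b},\tfrac{N}{N+a}\bigr)$ (the same branch inverses you found, applied to the endpoints), integrates $1/x$ over each piece, and watches the logarithms cancel to give $\log(b/a)=\mu\bigl((a,b)\bigr)$. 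The paper's argument is entirely self-contained and needs nothing beyond the fundamental theorem of calculus; yours is the more systematic method and generalizes mechanically to other candidate densities, but it implicitly relies on the standard (change-of-variables) fact that the fixed-point equation for the transfer operator is equivalent to invariance of $h\,dx$ for a piecewise monotone $C^1$ map. If you want your write-up to be airtight as stated, you should either cite that equivalence or note that integrating your pointwise identity over an interval and substituting $x=x_i(y)$ on each branch recovers exactly the paper's computation.
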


\begin{proof}
It suffices to show invariance for intervals $(a,b)\subset[0,1]$. Now
	\begin{equation}
		\mu\left((a,b)\right)=\int_a^b \frac{1}{x} dx= \log\left(\frac{b}{a}\right).
	\end{equation}
	We have that $F_N^{-1}\left((a,b)\right)=\left(\frac{Na}{N+a},\frac{Nb}{N+b}\right)\cup \left(\frac{N}{N+b},\frac{N}{N+a}\right)$ which gives us
	\begin{eqnarray*}
		\mu\left(F_N^{-1}(a,b)\right)&=&\int_{\frac{Na}{N+a}}^{\frac{Nb}{N+b}} \frac{1}{x} dx+\int_{\frac{N}{N+b}}^{\frac{N}{N+a}} \frac{1}{x}dx\\
		&=& \log\left(\frac{\frac{Nb}{N+b}}{\frac{Na}{N+a}}\right)+\log\left(\frac{\frac{N}{N+a}}{\frac{N}{N+b}}\right)\\
		&=& \log\left(\frac{Nb(N+a)}{Na(N+b)}\frac{N(N+b)}{N(N+a)}\right)=\log\left(\frac{b}{a}\right)
	\end{eqnarray*}
	which finishes the proof. 
\end{proof}

To the dynamical system $(F_N,[0,1],\mathcal{B})$ we can also associate $S$-adic sequences in the same way as we did for the dynamical system defined by $T_N$. Note that if $x=[0;d_1,d_2,d_3,\ldots]_N$ then the sequence $1^{d_1-N},N,1^{d_2-N},N,1^{d_3-N},N,\ldots$ is the slow expansion of $x$ corresponding to $F_N$. 
If we want a strong analogy with the NCF sequences that we studied, we should take the the  M\"obius transformations corresponding to the inverse branches of $F_N$ and swap the numbers on the diagonal and anti-diagonal. This way we find 
	\begin{equation}\label{BandD}
		B=\left(
		\begin{matrix}
			N & 1 \\
			0 & N 
		\end{matrix}
		\right)
		\quad \text{and}  \quad 
		D=\left(
		\begin{matrix}
			N & 1 \\
			N & 0 
		\end{matrix}
		\right).
	\end{equation}
	These matrices should be the incidence matrices of the substitutions. With this in mind we associate the slow NCF expansions with the following substitutions
	\[ 
	\tau_{B} :
	\begin{cases}
		0\rightarrow 0^N,\\		
		1\rightarrow 0 1^N , \\
	\end{cases}
	\]
	and
	\[ 
	\tau_{D} :
	\begin{cases}
		0\rightarrow 0^N 1^N.\\		
		1\rightarrow 0,\\
	\end{cases}
	\]
	For every irrational number $x$ we can now find a directive sequence $\bsigma=(\sigma_n)_{n\geq 1}$, where $\sigma_n=\tau_{B}$ if the $i^{\text{th}}$ digit in the slow expansion is $1$ and $\sigma_n=\tau_{D}$ if it is $N$. It would be interesting to study the corresponding $S$-adic sequence.

	\section{Perspectives}
	We would like to finish this article by discussing some open questions that have arisen as well as some remarks.
	Recall that for $N=2$ we found, for all $x\in [0,1]\setminus\mathbb{Q}$, the optimal $C$ for which the $S$-adic sequences $\omega(x,2)$ and $\ompr(x,2)$ are $C$-balanced for each $x$. For each $N \geq 3$ we found uniform (in $x$) upper and lower bounds for such an optimal balance constant. We conjecture that the (uniform) optimal balance constant equals the upper bound.
	
We also established better bounds for the optimal balance constant for sets of points with large $N$-continued fraction digits. If for a given $x$ all the digits are larger than or equal to $2N-2$, the bound in Theorem~\ref{prop:Nbalalg} shows that $C=2$ which is certainly optimal. 
However, optimality of $C$ is not proven for any other $x$. This raises the following questions.
\begin{questions}
Let $N \geq 3$ be fixed.
\begin{enumerate}
\item What is the smallest constant $C$ for which $\ompr(x,N)$  is $C$-balanced for each $x\in [0,1]\setminus\mathbb{Q}$?
\item Let $1<k \leq \left\lfloor \frac{K-1}{K+1-N} \right\rfloor+1$. Are there $x\in W_{K,N} $ such that $\ompr(x,N)$ is $k$-balanced but not $(k-1)$-balanced? Can one characterize these $x$?
\end{enumerate}
\end{questions}
	
	Of course we can ask analogous questions for $\omega(x,N)$.

	A natural generalization of our setting are $S$-adic sequences $(\sigma_{N_n,d_n})_{n\ge 1}$ with substitutions $\sigma_{N_n,d_n}$ with $d_n \geq N_n$ given by
	\[ 
	\sigma_{N_n,d_n} :
	\begin{cases}
		0\rightarrow 0^{d_n} 1^{N_n},\\
		1\rightarrow 0.\\		
	\end{cases}
	\]
	Here $N_n\ge 1$ are arbitrary but fixed integers. This could be related to a generalization of continued fraction expansions where the numerators correspond to the sequence $(N_n)_{n \geq 1}.$

	Another direction one can take is investigating the slow version of the $N$-continued fractions discussed in Section~\ref{sec:farey}. The $S$-adic sequences arising from $\tau_B$ and $\tau_D$ are not yet studied. Since we have the invariant measure for the map $F_N$, it would natural to continue the study  of metric properties of this map. 
	
	\section*{Acknowledgements}
	   The authors would like to thank  Val\'erie Berth\'e and Wolfgang Steiner for interesting suggestions and discussions. Moreover, they are grateful to the anonymous referee for her/his valuable comments.

	\bibliographystyle{plain}  
	\bibliography{biblio}
	
\end{document}